\def\@settitle{%
  \vspace*{-20pt}
  \begin{flushleft}%
    \baselineskip14\p@\relax
    \normalfont\bfseries\LARGE
    \@title
  \end{flushleft}%
}
\def\@setauthors{%
  \begingroup
  \def\thanks{\protect\thanks@warning}%
  \trivlist
  \large \@topsep30\p@\relax
  \advance\@topsep by -\baselineskip
  \item\relax
  \author@andify\authors
  \def\\{\protect\linebreak}%
  \authors
  \ifx\@empty\contribs
  \else
    ,\penalty-3 \space \@setcontribs
    \@closetoccontribs
  \fi
  \normalfont
  \@setaddresses
  \endtrivlist
  \endgroup
}
\def\@setaddresses{\par
  \nobreak \begingroup\raggedright
  \small
  \def\author##1{\nobreak\addvspace\smallskipamount}%
  \def\\{\unskip, \ignorespaces}%
  \interlinepenalty\@M
  \def\address##1##2{\begingroup
    \par\addvspace\bigskipamount\noindent
    \@ifnotempty{##1}{(\ignorespaces##1\unskip) }%
    {\ignorespaces##2}\par\endgroup}%
  \def\curraddr##1##2{\begingroup
    \@ifnotempty{##2}{\nobreak\noindent\curraddrname
      \@ifnotempty{##1}{, \ignorespaces##1\unskip}\/:\space
      ##2\par}\endgroup}%
  \def\email##1##2{\begingroup
    \@ifnotempty{##2}{\smallskip\nobreak\noindent E-mail address%
      \@ifnotempty{##1}{, \ignorespaces##1\unskip}\/:\space
      \ttfamily##2\par}\endgroup}%
  \def\urladdr##1##2{\begingroup
    \def~{\char`\~}%
    \@ifnotempty{##2}{\nobreak\noindent\urladdrname
      \@ifnotempty{##1}{, \ignorespaces##1\unskip}\/:\space
      \ttfamily##2\par}\endgroup}%
  \addresses
  \endgroup
  \global\let\addresses=\@empty
}
\def\@setabstracta{%
    \ifvoid\abstractbox
  \else
    \skip@25\p@ \advance\skip@-\lastskip
    \advance\skip@-\baselineskip \vskip\skip@
    \box\abstractbox
    \prevdepth\z@ 
    \vskip-10pt
  \fi
}
\renewenvironment{abstract}{%
  \ifx\maketitle\relax
    \ClassWarning{\@classname}{Abstract should precede
      \protect\maketitle\space in AMS document classes; reported}%
  \fi
  \global\setbox\abstractbox=\vtop \bgroup
    \normalfont\small
    \list{}{\labelwidth\z@
      \leftmargin0pc \rightmargin\leftmargin
      \listparindent\normalparindent \itemindent\z@
      \parsep\z@ \@plus\p@
      
    }%
    \item[\hskip\labelsep\bfseries\abstractname.]%
}{%
  \endlist\egroup
  \ifx\@setabstract\relax \@setabstracta \fi
}
\def\section{\@startsection{section}{1}%
  \z@{-1.2\linespacing\@plus-.5\linespacing}{.8\linespacing}%
  {\normalfont\bfseries\large}}
\def\subsection{\@startsection{subsection}{2}%
  \z@{-.8\linespacing\@plus-.3\linespacing}{.3\linespacing\@plus.2\linespacing}%
  {\normalfont\bfseries}}
\def\subsubsection{\@startsection{subsubsection}{3}%
  \z@{.7\linespacing\@plus.1\linespacing}{-1.5ex}%
  {\normalfont\itshape}}
\def\@secnumfont{\bfseries}
\def\Z{\mathbb{Z}}
\def\Q{\mathbb{Q}}
\def\d{\partial}
\def\tilde{\widetilde}
\def\Hom{\operatorname{Hom}}
\def\+{\oplus}
\def\hat{\widehat}
\newcommand{\da}{\ar@{-->}}
\newcommand{\dar}{\ar@{.>}}
\newcommand{\lar}{\ar@{-}}
\newcommand{\tc}{\textcolor}
\numberwithin{equation}{section}
\theoremstyle{plain}
\newtheorem{theorem}{Theorem}[section]
\newtheorem{proposition}[theorem]{Proposition}
\newtheorem{lemma}[theorem]{Lemma}
\newtheorem{theoremalpha}{Theorem}
\theoremstyle{definition}
\newtheorem{definition}[theorem]{Definition}
\newtheorem{example}[theorem]{Example}
\newtheorem{remark}[theorem]{Remark}
\newtheorem*{acknowledgement}{Acknowledgement}
\def\to{\mathchoice{\longrightarrow}{\rightarrow}{\rightarrow}{\rightarrow}}
\newcommand{\shortxra}[2][]{\ext@arrow 0359\rightarrowfill@{#1}{#2}}
\def\longrightarrowfill@{\arrowfill@\relbar\relbar\longrightarrow}
\newcommand{\longxra}[2][]{\ext@arrow 0359\longrightarrowfill@{#1}{#2}}
\renewcommand{\xrightarrow}[2][]{\mathchoice{\longxra[#1]{#2}}%
  {\shortxra[#1]{#2}}{\shortxra[#1]{#2}}{\shortxra[#1]{#2}}}
\begin{document}

\title[Links with nontrivial Alexander polynomial]
{Links with nontrivial Alexander polynomial which are topologically concordant to the Hopf link}

\author{Min Hoon Kim}
\address{
  School of Mathematics\\
  Korea Institute for Advanced Study \\
  Seoul 02455\\
  Republic of Korea
}
\email{kminhoon@kias.re.kr}

\author{David Krcatovich}
\address{
	Rice University\\ 
	Department of Mathematics\\ 
	6100 Main Street, Houston, TX 77005\\
	USA}
\email{dk27@rice.edu}
\author{JungHwan Park}
\address{
	Rice University\\ 
	Department of Mathematics\\ 
	6100 Main Street, Houston, TX 77005\\
	USA}
\email{jp35@rice.edu}



\maketitle
\begin{abstract}We give infinitely many 2-component links with unknotted components which are topologically concordant to the Hopf link, but not smoothly concordant to any 2-component link with trivial Alexander polynomial. Our examples are pairwise non-concordant.\end{abstract}

\section{Introduction}\label{section:Introduction}
Freedman's topological 4-dimensional surgery theory \cite{Freedman:1982-1} has a well-known consequence that knots with trivial Alexander polynomial are topologically slice (see \cite{Freedman:1982-2,Freedman-Quinn:1990-1,Garoufalidis-Teichner:2004-1}). Inspired by the result of Freedman, Hillman  \cite[Section 7.6]{Hillman:2002-1}  proposed a surgery program for 2-component links with linking number one. By completing the program of Hillman, Davis \cite{Davis:2006-1} proved that every 2-component link with linking number 1 which has trivial Alexander polynomial is topologically concordant to the Hopf link. In other words, the Alexander polynomials of knots and links determine their topological concordance type in these cases. Interestingly, Cha, Friedl and Powell \cite{Cha-Friedl-Powell:2014-1} proved that these two cases are exceptional. Namely, they showed that the link concordance class is not determined by the Alexander polynomial in any other cases.

 Based on Donaldson's diagonalization theorem, Casson and Akbulut observed that there are knots with trivial Alexander polynomial which are not smoothly slice (their results are unpublished, see Cochran and Gompf \cite{Cochran-Gompf:1988-1}). Following this result, smooth concordance of topologically slice knots has been studied extensively using various modern techniques including gauge theory, Heegaard Floer homology and Khovanov homology (for example, see \cite{Gompf:1986-1,Endo:1995-1,Manolescu-Owens:2007-1,Livingston:2008-1,Hedden-Livingston-Ruberman:2012-1,Cochran-Harvey-Horn:2013-1,Cochran-Horn:2015-1,Hom:2015-3,Ozsvath-Stipsicz-Szabo:2014-1,Hedden-Kim-Livingston:2016-1,Donald-Vafaee:2016-1}). Most of these examples  have trivial Alexander polynomial. It was natural to ask whether every topologically slice knot is smoothly concordant to a knot with trivial Alexander polynomial. Hedden, Livingston and Ruberman \cite{Hedden-Livingston-Ruberman:2012-1} constructed infinitely many topologically slice knots which are not smoothly concordant to any knot with trivial Alexander polynomial. Actually, they showed that their examples are linearly independent in the smooth knot concordance group.

Cha, T.\ Kim, Ruberman and Strle \cite{Cha-Kim-Ruberman-Strle:2010-1} constructed an infinite family of links with unknotted components which are topologically concordant, but not smoothly concordant, to the Hopf link. By studying satellite operators, Davis and Ray \cite{Davis-Ray:2017-1} constructed another family of  links with the same properties. These families of links have trivial Alexander polynomial. (This fact can be checked using C-complexes.) Inspired by the result of Hedden-Livingston-Ruberman \cite{Hedden-Livingston-Ruberman:2012-1}, it is natural to ask whether there are links with unknotted components which are topologically concordant to the Hopf link but not smoothly concordant to any link with trivial Alexander polynomial. The goal of this paper is to answer that question.

\begin{theoremalpha}\label{theorem:A}There exist infinitely many, pairwise non-concordant $2$-component links with unknotted components which are topologically concordant to the Hopf link, but not smoothly concordant to any $2$-component link with trivial Alexander polynomial.
\end{theoremalpha}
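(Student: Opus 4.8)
The plan is to build the links by a satellite construction on the Hopf link driven by a family of topologically slice knots, to reduce the smooth obstruction to a knot-theoretic statement via a branched covering link, and then to import the Heegaard Floer correction-term obstruction of Hedden--Livingston--Ruberman \cite{Hedden-Livingston-Ruberman:2012-1}. For a knot $K$ I would first produce a $2$-component link $L(K)=L_1(K)\cup L_2(K)$ of linking number $1$ with both components unknotted, by a $K$-dependent satellite/infection operation on the Hopf link in the spirit of \cite{Cha-Kim-Ruberman-Strle:2010-1, Davis-Ray:2017-1}, set up so that: (i) when $K$ is topologically slice, $L(K)$ is topologically concordant to the Hopf link --- this would follow from Freedman's disc embedding theorem \cite{Freedman:1982-1} (equivalently, from Davis's theorem \cite{Davis:2006-1} after checking the relevant Alexander-module condition), since the $K$-dependent modification has the effect, topologically, of gluing in an $S^1\times D^3$; (ii) the multivariable Alexander polynomial of $L(K)$ is nontrivial, which is exactly what places these links outside the scope of Davis's theorem and which must be verified directly (e.g.\ from a $C$-complex or an explicit presentation of the Alexander module); and (iii) passing to the double branched cover $\Sigma_2(S^3,L_1(K))\cong S^3$ (legitimate since $L_1(K)$ is unknotted), the preimage $\widehat{L}_2(K)\subset S^3$ of $L_2(K)$ is a single knot (the linking number being odd) which is a satellite of $K$ whose cyclic branched covers carry the relevant correction terms of $K$.

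For the obstruction, suppose $L(K)$ is smoothly concordant to a $2$-component link $L'$ with $\Delta_{L'}\doteq 1$. Then $L'$ has linking number $1$ and is topologically concordant to the Hopf link by \cite{Davis:2006-1}, but the useful step is to push the smooth concordance to double branched covers along the first components. Since $L_1(K)$ and $L_1'$ are unknots, the double branched cover of the link concordance along the annulus cobounded by the first components is a smooth integral homology cobordism from $S^3$ to $S^3$ inside which $\widehat{L}_2(K)$ is smoothly concordant to the lift $\widehat{L}_2'$. A covering-space computation of Alexander polynomials gives $\Delta_{\widehat{L}_2'}\doteq 1$, so $\widehat{L}_2'$ is topologically slice. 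Hence $\widehat{L}_2(K)$ would be smoothly concordant, within a homology cobordism, to a knot with trivial Alexander polynomial; this is precisely what the method of \cite{Hedden-Livingston-Ruberman:2012-1} rules out. Concretely, for a suitable prime power $q$ the $q$-fold cyclic branched cover $\Sigma_q(\widehat{L}_2(K))$ would then be $\mathbb{Z}[1/q]$-homology cobordant to the integral homology sphere $\Sigma_q(\widehat{L}_2')$, forcing the correction terms $d(\Sigma_q(\widehat{L}_2(K)),\mathfrak{s})$ to be independent of the spin$^c$ structure $\mathfrak{s}$ among those extending over the cobordism. Choosing $K$ --- and therefore $\widehat{L}_2(K)$ --- so that these correction terms genuinely vary with $\mathfrak{s}$ produces the contradiction.

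For infinitely many pairwise non-concordant examples, I would run the construction over a family $K_n$ (iterated connected sums of a well-chosen topologically slice knot, or the family already used in \cite{Hedden-Livingston-Ruberman:2012-1}), arranged so that the spread of $d(\Sigma_q(\widehat{L}_2(K_n)),\cdot)$, or a numerical invariant distilled from it, is strictly monotone in $n$. Since these quantities are pulled back from Heegaard Floer invariants of integral homology sphere covers through homology cobordisms, they are invariants of smooth link concordance, so no two of the $L(K_n)$ are smoothly concordant; together with the previous paragraph this gives all of Theorem~A.

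The hard part will be the interface: simultaneously arranging that the satellite operation leaves the two components unknotted, that it nevertheless makes the Alexander polynomial nontrivial, that the resulting link still lies in the topological concordance class of the Hopf link, and that the branched cover $\widehat{L}_2(K)$ retains enough of $K$ to support the correction-term computation. One must also carefully verify the version of the Hedden--Livingston--Ruberman obstruction needed here --- valid for concordances taking place inside a homology cobordism, and transferring from ``knots with trivial Alexander polynomial'' to ``links with trivial Alexander polynomial'' via the covering identity $\Delta_{\widehat{L}_2'}\doteq 1$. The point is that the coarser concordance invariants ($\tau$, $s$, the Manolescu--Owens $\delta$, the Tristram--Levine signatures) are useless for this, since none of them vanishes on every knot with trivial Alexander polynomial; it is the comparison of correction terms across the spin$^c$ structures of the prime-power branched covers that must do the work.
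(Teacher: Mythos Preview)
Your outline shares the broad strategy of the paper --- build links from topologically slice knots, reduce to a knot-level statement, and invoke the Hedden--Livingston--Ruberman correction-term obstruction --- but the reduction step you propose is genuinely different from the paper's, and as written it contains a gap.

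The paper does \emph{not} pass to the double branched cover over the first component. Instead it performs $1$-surgery on $L_1$, obtaining a knot $K_L$ (the image of $L_2$) inside the resulting integral homology sphere $Y_L$, and then takes the $2$-fold branched cover $\Sigma_{K_L}$ of $Y_L$ along $K_L$ (Theorem~\ref{theorem:B}). This buys two things. First, $1$-surgery on \emph{any} knot is an integral homology sphere, so nothing about the knot type of the first component of the target link $L'$ need be controlled. Second, since the exteriors of $L'$ and of the corresponding link $L'\subset Y_{L'}$ coincide, the Torres condition immediately gives $\Delta_{K_{L'}}(t)=\Delta_{L'}(1,t)=1$ (Lemma~\ref{lemma:Alexofblowdown}), which is exactly the input the HLR argument requires. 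The explicit link $L_n$ is then engineered so that $\Sigma_{K_{L_n}}\cong S^3_{9/4}(J_n\# J_n^r)$, reducing everything to the computation $V_0(2J_n)=2n$, $V_1(2J_n)=2n-1$ (Theorem~\ref{theorem:C}).

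In your route you instead take the double branched cover over $L_1$ and assert that ``$L_1(K)$ and $L_1'$ are unknots.'' This is not justified: the hypothetical target $L'$ is only assumed to have trivial multivariable Alexander polynomial, not unknotted components. Torres does force $\Delta_{L_1'}(t)=\Delta_{L'}(t,1)=1$, so $\Sigma_2(L_1')$ is an integral homology sphere, but it need not be $S^3$; your ``integral homology cobordism from $S^3$ to $S^3$'' is at best a cobordism from $S^3$ to that homology sphere. More substantively, the assertion $\Delta_{\widehat{L}_2'}\doteq 1$ is a covering-link Alexander polynomial computation that is not simply Torres and would need its own argument. These points are likely repairable, but the paper's $1$-surgery reduction sidesteps them entirely and lands directly in the setting where Theorem~\ref{theorem:Hedden-Livingston-Ruberman} applies.
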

\begin{figure}[htb]\label{figure:mainexample}
\centering
\includegraphics[width=1.5in]{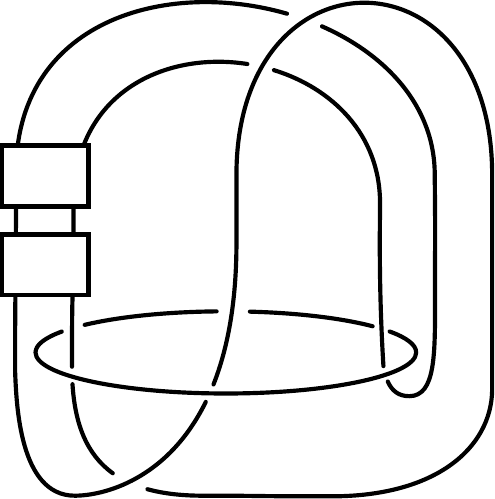}
\put(-100.5,67.5){$1$}
\put(-103,48.4){$J_n$}
\put(-50,14){$L_{n,1}$}
\put(2,50){$L_{n,2}$}
\caption{A family of two component links $L_n=L_{n,1}\sqcup L_{n,2}$.}
\end{figure}

Our family of examples $L_n$ is given in Figure \ref{figure:mainexample}. It is immediate to see that the components of $L_n$ are unknotted. Here the knot $J_n$ is 
\[J_n=(nD)_{2,4n-1} \# -T_{2,4n-1} \#2(n-1)D\]
where $D$ is the (positive) Whitehead double of the right handed trefoil. We will see that $J_n$ is topologically slice in Lemma \ref{lemma:Jn}. Assuming that $J_n$ is topologically slice, Figure~\ref{figure:concordancehopf} shows that $L_n$ is topologically concordant to the Hopf link for any $n$.

\begin{figure}[htb]
\centering
\includegraphics[width=5.5in]{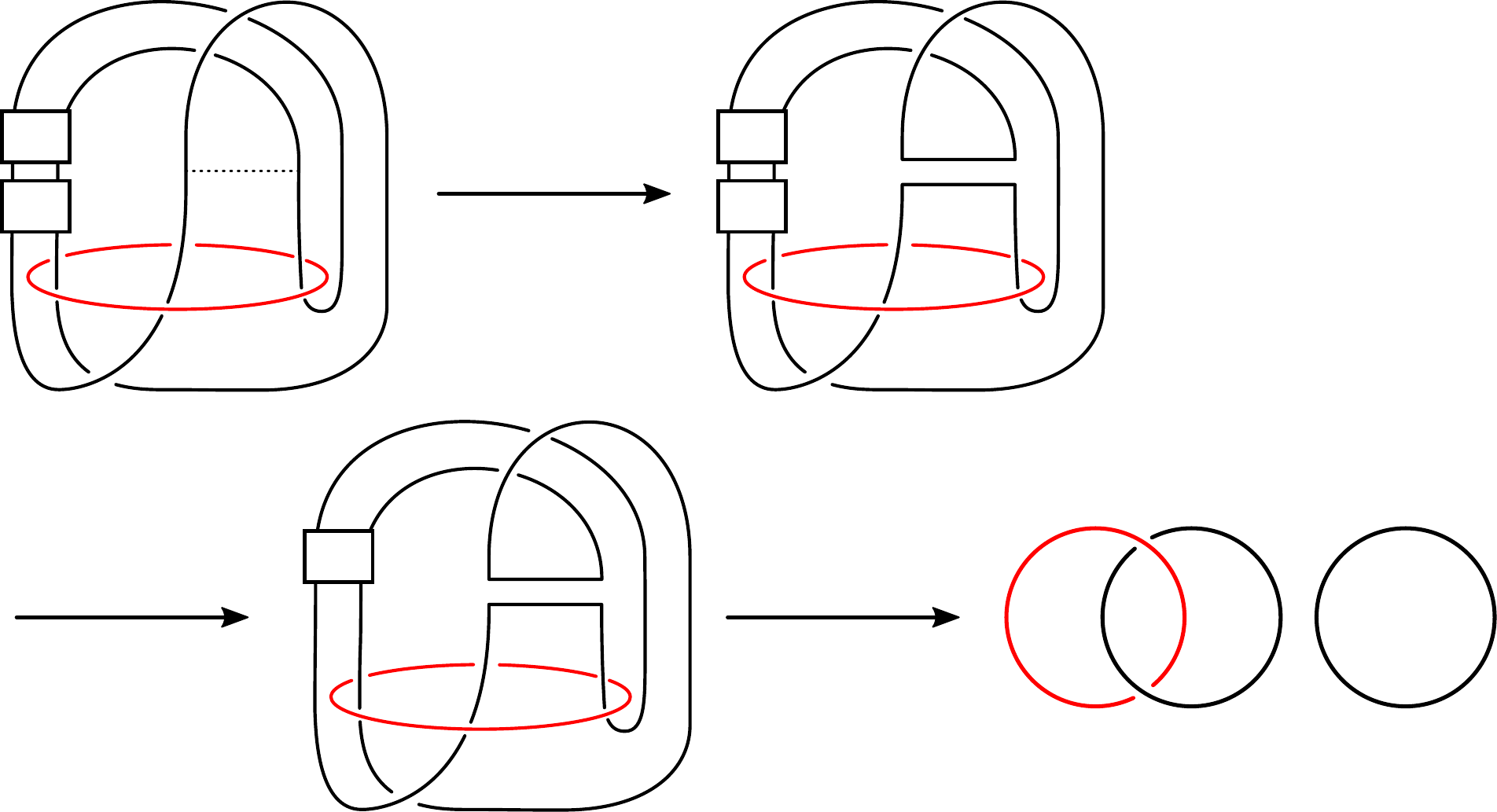}
\put(-390,176){$1$}
\put(-393,158){$J_n$}
\put(-200,176){$1$}
\put(-202.5,158){$J_n$}
\put(-310,64.5){$1$}
\put(-277,171.5){band move}
\put(-387,73){topological}
\put(-390,60){concordance}
\put(-191,60){isotopy}
\caption{A proof that the link $L_n$ is topologically concordant to the Hopf link. The first figure represents $L_n$.  The second figure is obtained by a band move from the first figure. (The band is depicted as the dotted line in the first figure.) The link in the third figure is topologically concordant to the link in the second figure since $J_n$ is topologically slice. It is straightforward to see that the third figure is isotopic to the split union of the Hopf link and the unknot.}
\label{figure:concordancehopf}
\end{figure}
The difficult part of Theorem \ref{theorem:A} is to prove $L_n$ is not smoothly concordant to any $2$-component link with trivial Alexander polynomial. This part has two ingredients. The first ingredient is the following vanishing theorem which is analogous to an obstruction introduced in \cite{Hedden-Livingston-Ruberman:2012-1}.

\begin{theoremalpha}\label{theorem:B} Suppose that $L=L_1\sqcup L_2$ is a $2$-component link with linking number $1$. Let $Y_L$ be the $3$-manifold obtained by doing $1$-surgery on $S^3$ along $L_1$. Denote the image of $L_2$ in $Y_L$ by $K_L$. Let $\Sigma_{K_L}$ be the $2$-fold cover of $Y_L$ branched along $K_L$. If $L$ is smoothly concordant to a link $J$ with $\Delta_J(s,t)=1$, then the Hedden-Livingston-Ruberman obstruction vanishes for $\Sigma_{K_L}$.
\end{theoremalpha}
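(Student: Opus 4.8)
The plan is to transport a smooth concordance from $L$ to $J$ through surgery and the branched double cover, obtaining a smooth rational homology cobordism from $\Sigma_{K_L}$ to $\Sigma_{K_J}$, and then to exploit that $\Delta_J(s,t)=1$ forces $\Sigma_{K_J}$ to be an integral homology sphere, for which the Hedden-Livingston-Ruberman obstruction is trivial essentially by definition.

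First I would work downstairs. Realize the concordance as a pair of disjoint smoothly embedded annuli $C_1\sqcup C_2\subset S^3\times[0,1]$ with $\partial C_i=L_i\times\{0\}\cup J_i\times\{1\}$, equipped with the product framing coming from the $0$-framings of the $L_i$ and $J_i$. Performing $1$-framed surgery along $C_1$ -- the ``concordance of surgeries'' interpolating between $1$-surgery on $L_1$ and $1$-surgery on $J_1$ -- produces a smooth cobordism $W$ from $Y_L$ to $Y_J$; since $C_1$ is an annulus and the framing is $\pm1$, a Mayer-Vietoris computation shows $W$ is an integral homology $S^3\times[0,1]$. As $C_2$ is disjoint from the surgery region, it persists in $W$ as a smooth annulus giving a concordance from $K_L\subset Y_L$ to $K_J\subset Y_J$.

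Next I would pass to the branched covers. Since $H_1(W\setminus\nu C_2)\cong\Z$ is generated by a meridian of $C_2$, the double cover of $W\setminus\nu C_2$ exists, and its branched completion $V:=\Sigma_2(W,C_2)$ is a smooth cobordism from $\Sigma_{K_L}$ to $\Sigma_{K_J}$; a transfer computation -- the cobordism analogue of the fact that the double cover of a rational homology $4$-ball branched along a properly embedded disk with knotted boundary in a rational homology $3$-sphere is again a rational homology $4$-ball -- shows that $V$ is a smooth \emph{rational} homology cobordism. Meanwhile the surgery formula for the one-variable Alexander polynomial yields $\Delta_{K_J}(t)\doteq\Delta_J(t^{-1},t)$ (the meridian of $L_2$ generates $H_1(Y_J\setminus K_J)\cong\Z$, while the meridian of $J_1$ maps to its inverse), so $\Delta_{K_J}=1$ and hence $|H_1(\Sigma_{K_J})|=|\Delta_{K_J}(-1)|=1$. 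Thus $\Sigma_{K_J}$ is an integral homology sphere, and the Hedden-Livingston-Ruberman obstruction -- a condition on a metabolizer of the torsion linking form -- is vacuously satisfied for it.

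Finally I would transfer vanishing across $V$. Because $H_1(\Sigma_{K_J})=0$, the subgroup $G=\ker\!\big(H_1(\Sigma_{K_L})\to H_1(V)\big)$ is, by half-lives-half-dies for rational homology cobordisms, a metabolizer of the linking form of $\Sigma_{K_L}$, and the spin$^c$ structures on $\Sigma_{K_L}$ extending over $V$ form the corresponding coset; for each such spin$^c$ structure, the Ozsv\'ath-Szab\'o correction-term inequalities applied to the smooth rational homology cobordism $V$ (in both directions, using $b_2^\pm(V)=0$) force the correction term of $\Sigma_{K_L}$ to equal that of $\Sigma_{K_J}$, which is exactly the assertion that the obstruction vanishes on $G$. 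I expect the genuine work to lie in this last step and in the bookkeeping feeding it: matching the metabolizer and the coset of spin$^c$ structures produced by $V$ with the data appearing in the definition of the Hedden-Livingston-Ruberman obstruction, and checking via the branched cover that this matching is the right one. The constructions of $W$ and $V$ and the accompanying homology computations, by contrast, should be routine once the framings are pinned down.
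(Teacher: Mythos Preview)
Your proposal is correct and follows the same route as the paper: surger along $C_1$ to get a $\Z$-homology $S^3\times I$ containing a concordance from $K_L$ to $K_J$, observe $\Delta_{K_J}=1$ (the paper uses Torres' condition on the link $L'=(\text{surgery core})\cup K_J$ in $Y_J$, which unwinds to exactly your substitution $s\mapsto t^{-1}$), take the branched double cover, note $\Sigma_{K_J}$ is an integral homology sphere, and feed this into the Hedden--Livingston--Ruberman machinery. The only cosmetic difference is that the paper records the branched cover as a $\Z_2$-homology cobordism, removes an arc to obtain a $\Z_2$-homology $4$-ball with boundary $\Sigma_{K_J}\#(-\Sigma_{K_L})$, and then invokes their black-box Theorem~\ref{theorem:Hedden-Livingston-Ruberman}, whereas you call it a rational homology cobordism (also true, since $\Z_2$-homology cobordism implies it) and unpack the metabolizer/$d$-invariant argument directly.
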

For the precise statement of vanishing Hedden-Livingston-Ruberman obstruction, see Theorem~\ref{theorem:mainobstruction}. To prove that $\Sigma_{K_{L_n}}$ has non-vanishing Hedden-Livingston-Ruberman obstruction, we will need the following theorem which is based on results of \cite{Kim-Park:2016-1} and the reduced Floer chain complex introduced by the second author \cite{Krcatovich:2015-1}. Here, $\{V_k(K)\}$ is a sequence of smooth concordance invariants of~$K$ which was introduced by Rasmussen \cite{Rasmussen:2003-1} and then further studied by Ni and Wu \cite{Ni-Wu:2015-1}. Moreover, Ni and Wu showed that correction terms of $S^3_{p/q}(K)$ can be computed using the sequence $\{V_k(K)\}$. For more details, see Section~\ref{section:visequence}.
\begin{theoremalpha}\label{theorem:C} Let $J_n$ be the knot $(nD)_{2,4n-1} \# -T_{2,4n-1} \#2(n-1)D$ where $D$ is the Whitehead double of the right handed trefoil. For any integer $n\geq 1$, the knot $J_n$
is topologically slice and satisfies $V_0(2J_n)=2n$ and $V_1(2J_n)=2n-1$.
\end{theoremalpha}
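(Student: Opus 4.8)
The plan is to separate the two claims: the topological sliceness of $J_n$ follows quickly from Freedman's theorem, while the computation of $V_0(2J_n)$ and $V_1(2J_n)$ is a Heegaard Floer computation resting on \cite{Kim-Park:2016-1} and the reduced chain complex of \cite{Krcatovich:2015-1}. For the topological sliceness (this is Lemma~\ref{lemma:Jn}), observe that $D$ is the untwisted positive Whitehead double of the right-handed trefoil, so $\Delta_D(t)=1$ and $D$ is topologically slice by \cite{Freedman:1982-1}; in particular $D$ is topologically concordant to the unknot. Since topological concordance is preserved under satellite operations and under connected sum, $(nD)_{2,4n-1}$ is topologically concordant to $(U)_{2,4n-1}=T_{2,4n-1}$ and $2(n-1)D$ is topologically concordant to the unknot $U$. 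Hence $J_n$ is topologically concordant to $T_{2,4n-1}\#-T_{2,4n-1}$, which is smoothly slice, so $J_n$ is topologically slice.

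Before computing $V_0$ and $V_1$ I would recall the toolkit: the $V_k$ are concordance invariants with $V_k(K)\ge V_{k+1}(K)\ge V_k(K)-1\ge 0$; they are determined by $CFK^\infty(K)$, equivalently can be read off from the reduced complex of \cite{Krcatovich:2015-1}; and under connected sum the reduced complex of $K_1\#K_2$ is obtained, after cancelling acyclic box summands, from the tensor product of the reduced complexes of $K_1$ and $K_2$. The key remark is that $V_k$ is only subadditive, $V_{i+j}(K_1\#K_2)\le V_i(K_1)+V_j(K_2)$, whereas the natural decomposition $2J_n=2(nD)_{2,4n-1}\#-2T_{2,4n-1}\#4(n-1)D$ contains about $4(n-1)$ copies of $D$; subadditivity is therefore hopeless here, and the exact values $2n$ and $2n-1$ can only come from an honest computation of (enough of) the reduced complex of $2J_n$, the point being that the iterated tensor product collapses dramatically.

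The next step is to assemble the building blocks. For $T_{2,4n-1}$ the reduced complex is the standard staircase, with $V_0(T_{2,4n-1})=n$ and $V_1(T_{2,4n-1})=n-1$, while $V_k(-T_{2,4n-1})=0$ for all $k$ because $\tau(-T_{2,4n-1})<0$. For $D$, Hedden's computation of the knot Floer homology of Whitehead doubles shows that the reduced complex of $D$ agrees with the trefoil staircase up to acyclic box summands; in particular $D$ is locally equivalent to $T_{2,3}$, but these boxes, while irrelevant to $D$'s own concordance invariants, are \emph{not} negligible once one cables. The heart of this step is the cable $(nD)_{2,4n-1}$: using the bordered-Floer cabling formula, or a surgery description relating large surgeries on $(nD)_{2,4n-1}$ to surgeries on $nD$ together with a lens space and then the Ni--Wu surgery formula \cite{Ni-Wu:2015-1}, one determines the relevant portion of $CFK^\infty\big((nD)_{2,4n-1}\big)$. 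These cable computations are exactly what \cite{Kim-Park:2016-1} provides.

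Finally I would put the pieces together. Writing $2J_n=2(nD)_{2,4n-1}\#-2T_{2,4n-1}\#4(n-1)D$ and replacing each of the $4(n-1)$ explicit copies of $D$ by $T_{2,3}$ up to local equivalence, compute the reduced complex of $2J_n$ as the tensor product of the complex of $2(nD)_{2,4n-1}$ with $4(n-1)$ trefoil staircases and two copies of the staircase of $-T_{2,4n-1}$; after cancelling the many acyclic boxes that arise, what is left is a staircase-type complex from whose step data one reads off $V_0(2J_n)=2n$ and $V_1(2J_n)=2n-1$. The hard part will be precisely this bookkeeping — keeping track of which summands of the iterated tensor product are cancellable boxes, and pinning down $CFK^\infty\big((nD)_{2,4n-1}\big)$ exactly rather than merely bounding its invariants; this is where the results of \cite{Kim-Park:2016-1} and the reduced-complex formalism of \cite{Krcatovich:2015-1} carry the weight.
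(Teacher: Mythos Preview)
Your argument for topological sliceness is fine and matches the paper's. The gap is in the Floer computation, and it is a substantive one: you have misidentified both what \cite{Kim-Park:2016-1} provides and what needs to be computed.

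You propose to compute $CFK^\infty\big((nD)_{2,4n-1}\big)$ directly (via bordered Floer or a surgery argument), and you flag this as ``the hard part''. But $nD$ is not an $L$-space knot---its complex is a trefoil staircase plus many acyclic boxes---and cabling such a complex by hand for all $n$ is not realistic; moreover, \cite{Kim-Park:2016-1} does \emph{not} carry out such cable computations. What that paper actually supplies is Theorem~\ref{theorem:KP}: $\nu^+$-equivalence is preserved under satellite operations with nonzero winding number. This is precisely the opposite of your remark that the acyclic boxes in $CFK^\infty(D)$ ``are not negligible once one cables''---for the purpose of computing $V_k$, they \emph{are} negligible, even after cabling.

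The paper exploits this by replacing $nD$ with $T_{2,2n+1}$ \emph{before} cabling (Example~\ref{example:nu+equivalent}), so that $(nD)_{2,4n-1}$ is $\nu^+$-equivalent to the iterated torus knot $T_{2,2n+1;2,4n-1}$, which is an $L$-space knot by \cite{Hedden:2009-1} with an explicitly computable staircase. Likewise the remaining summands $-2T_{2,4n-1}\#4(n-1)D$ collapse to $-2T_{2,3}$, which is $\nu^+$-equivalent to $-T_{2,5}$. The entire computation then reduces to tensoring two compatible staircases (Lemma~\ref{lemma:staircasesummand}) with the small complex $CFK^-(-T_{2,5})$; the reduced-complex machinery of \cite{Krcatovich:2015-1} then reads off $V_0=2n$ and $V_1=2n-1$. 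Your outline never arrives at a complex you can actually reduce, because it tries to work with $(nD)_{2,4n-1}$ itself rather than its $\nu^+$-equivalent $L$-space replacement.
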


The rest of the paper is organized as follows. In Section \ref{section:HLR}, we will show that the Alexander polynomial of $K_L$ is determined by the Alexander polynomial of $L$. We will recall the Hedden-Livingston-Ruberman obstruction and prove Theorem \ref{theorem:B}. In Section \ref{section:d-invariants}, we will recall necessary results on Heegaard Floer invariants of knots and prove Theorem \ref{theorem:C}. In Section
~\ref{section:maintheorem}, we prove Theorem \ref{theorem:A}.

\begin{acknowledgement}This paper was partially written when the authors participated in the conference on knot concordance and 4-manifolds which was held at the Max-Planck-Institut f\"{u}r Mathematik in October 17--21, 2016. The authors would like to thank the Max-Planck-Institut f\"{u}r Mathematik for its stimulating working environment. The first author was partially supported by the Overseas Research Program for Young Scientists through the Korea Institute for Advanced Study. The third author was partially supported by the National Science Foundation grant DMS-1309081. The first and the third authors thank the Hausdorff Institute for Mathematics in Bonn for both support and its outstanding research environment. The authors thank Jae Choon Cha, Matthew Hedden and Daniel Ruberman for comments on the first version of this paper. Finally, we are grateful to the anonymous referee for the detailed and thoughtful suggestions.
\end{acknowledgement}

\section{A link analogue of the Hedden-Livingston-Ruberman obstruction}\label{section:HLR}
\subsection{Alexander polynomials of knots in homology 3-spheres}\label{section:blow-down}


We first recall necessary definitions of the Alexander polynomials of knots and links following \cite[Chapter~7]{Kawauchi:1996-1} and \cite[Chapter~3]{Hillman:2002-1}. Let $\Lambda$ be a ring which is a unique factorization domain and N\"{o}therian. (In our cases, $\Lambda$ will be either $\Z[t^{\pm 1}]$ or $\Z[s^{\pm 1},t^{\pm 1}]$.) Let $M$ be a finitely generated $\Lambda$-module. Since $M$ is finitely generated, there is an epimorphism $\varphi\colon \Lambda^n\to M$ for some positive integer~$n$. Since $\Lambda$ is N\"{o}therian, we can assume that  the kernel of $\varphi$ is generated by $m$ elements with $m\geq n$. There is an $m\times n$ matrix $P$ over $\Lambda$ and an exact sequence 
\[\Lambda^m\xrightarrow{P} \Lambda^n\to M\to 0.\]
The matrix $P$ is called a \emph{presentation matrix} of $M$. The \emph{$0$-th characteristic polynomial} of $M$, denoted by $\Delta_0(M)\in \Lambda$, is the greatest common divisor of the elements of the ideal generated by $n\times n$ minors of $P$. It is known that $\Delta_0(M)$ is well-defined up to multiplication by a unit of $\Lambda$. (That is, $\Delta_0(M)$ does not depend on the choice of a presentation matrix $P$.) 
\begin{remark}\label{remark:shortexact} For a given short exact sequence of finitely generated $\Lambda$-modules,
\[0\to M_1\to M\to M_2\to 0\]
we have $\Delta_0(M)\overset{\cdot}=\Delta_0(M_1)\Delta_0(M_2)$ where $\overset{\cdot}=$ denotes the equality up to multiplication by a unit of $\Lambda$ (see \cite[Lemma 7.2.7]{Kawauchi:1996-1}).
\end{remark}
For an oriented knot $K$ in a homology 3-sphere $Y$, let $E_K=Y-\nu(K)$. Let $\tilde{E_K}\to E_K$ be the infinite cyclic cover induced by the abelianization map $ \pi_1(E_K)\to H_1(E_K)$. 
As in \cite[Chapter 2]{Hillman:2002-1}, $H_*(E_K;\Z[t^{\pm 1}])$ denotes the homology of $\tilde{E_K}$. We define the Alexander polynomial $\Delta_K(t)$ of $K$ to be $\Delta_0(H_1(E_K;\Z[t^{\pm 1}]))$.

Similarly, for a 2-component link $L$ in $S^3$, let $E_L=S^3-\nu(L)$. Let $\tilde{E_L}\to E_L$ be the $\Z\oplus \Z$-cover induced by the abelianization map $\pi_1(E_L)\to H_1(E_L)$. We define the Alexander polynomial $\Delta_L(s,t)$ of $L$ to be $\Delta_0(H_1(E_L;\Z[s^{\pm 1},t^{\pm 1}]))$.

Technically, our definitions of the Alexander polynomials of knots/links seem to be different from the ones given in \cite{Kawauchi:1996-1}, but they are in fact equivalent (see \cite[Proposition 7.3.4(2)]{Kawauchi:1996-1}).

\begin{definition}[Knot obtained by 1-surgery on the first component]\label{definition:knotobtainedbysurgery} Let $L=L_1\sqcup L_2$ be a 2-component link in $S^3$ with linking number 1. Let $Y_L$ be the homology 3-sphere obtained from $S^3$ by doing 1-surgery along $L_1$. Let $K_L$ be the knot which is the image of $L_2$ in $Y_L$. We say $K_L$ is \emph{the knot obtained from $L$ by doing $1$-surgery on the first component}.
\end{definition}
For example, the knot $K_{L_n}$ drawn in Figure \ref{figure:blowdown} is the knot obtained from $L_n$ by doing 1-surgery on the first component. A simple proof of the following lemma was suggested by the anonymous referee.
\begin{figure}[h]
\centering
\includegraphics[width=1.5in]{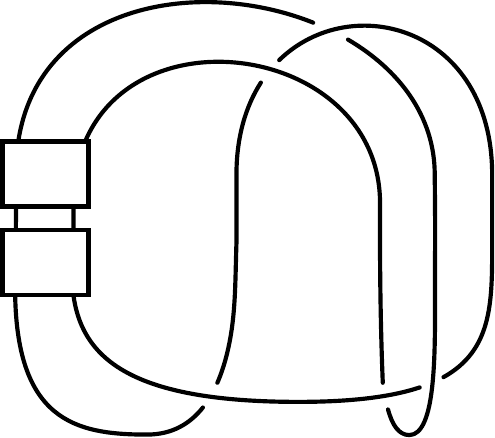}
\put(-100,54){$1$}
\put(-103,36){$J_n$}
\caption{A knot $K_{L_n}$ obtained from $L_n$ by doing $1$-surgery on the first component.}
\label{figure:blowdown}
\end{figure}
\begin{lemma}\label{lemma:Alexofblowdown}Let $L$ be a $2$-component link with linking number $1$. If the Alexander polynomial of $L$ is trivial, then the Alexander polynomial of the knot $K_L$ is also trivial.
\end{lemma}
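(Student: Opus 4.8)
The plan is to relate the Alexander module of the knot $K_L$ in the homology sphere $Y_L$ to the Alexander module of the link $L$ via the surgery description, and to show that triviality of $\Delta_L$ forces triviality of $\Delta_{K_L}$. First I would set up the exterior $E_{K_L} = Y_L - \nu(K_L)$ and observe that, since $Y_L$ is obtained from $S^3$ by $1$-surgery on $L_1$, the exterior $E_{K_L}$ is obtained from the link exterior $E_L = S^3 - \nu(L)$ by gluing a solid torus to the torus boundary component corresponding to $L_1$, along a slope determined by the $1$-framing. Because the linking number of $L_1$ and $L_2$ is $1$, the meridian of $L_2$ and the core curve of the reglued solid torus both generate $H_1$, and the abelianization $\pi_1(E_{K_L}) \to H_1(E_{K_L}) \cong \Z$ is the composite of $\pi_1(E_L) \to H_1(E_L) \cong \Z \oplus \Z$ with the map sending $s \mapsto t$ (sending the meridian of $L_1$ to $t$ as well, since the surgery curve is $\mu_1 + \lambda_1$ and $\lambda_1$ maps to $t$). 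Thus the infinite cyclic cover $\widetilde{E_{K_L}}$ is built from the cover of $E_L$ pulled back along $\Z[s^{\pm 1}, t^{\pm 1}] \to \Z[t^{\pm 1}]$, $s \mapsto t$, by attaching the infinite cyclic cover of a solid torus.

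Next I would run a Mayer--Vietoris argument. Decompose $E_{K_L} = E_L \cup_{T^2} (S^1 \times D^2)$, pull everything back to the infinite cyclic cover, and write the Mayer--Vietoris sequence in $\Z[t^{\pm 1}]$-coefficients:
\[
H_1(\widetilde{T^2}) \to H_1(\widetilde{E_L}) \oplus H_1(\widetilde{S^1 \times D^2}) \to H_1(\widetilde{E_{K_L}}) \to H_0(\widetilde{T^2}) \to \cdots
\]
The reglued torus $T^2$ has the property that the surgery slope $\mu_1 + \lambda_1$ is null-homologous in $E_{K_L}$ mod the other generator — one checks that $H_*(\widetilde{T^2}; \Z[t^{\pm 1}])$ and $H_*(\widetilde{S^1 \times D^2}; \Z[t^{\pm 1}])$ are controlled by cyclotomic-type factors (powers of $(t-1)$), since the relevant covering translation acts on these pieces through the abelianization. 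Using Remark~\ref{remark:shortexact} (multiplicativity of $\Delta_0$ in short exact sequences) together with the fact that $H_1(E_L; \Z[s^{\pm 1}, t^{\pm 1}])$ base-changed along $s \mapsto t$ has $0$-th characteristic polynomial dividing (a unit times) the image of $\Delta_L(s,t)$, I would conclude that $\Delta_{K_L}(t)$ divides a product of the image of $\Delta_L$ under $s \mapsto t$ and a power of $(t-1)$. When $\Delta_L(s,t) \overset{\cdot}{=} 1$, this forces $\Delta_{K_L}(t) \overset{\cdot}{=} (t-1)^a$ for some $a \geq 0$; but $\Delta_{K_L}(t)$ is the Alexander polynomial of a knot in a homology sphere, which is known to satisfy $\Delta_{K_L}(1) \overset{\cdot}{=} 1$ (equivalently $H_1$ of the infinite cyclic cover is $\Z[t^{\pm1}]$-torsion with $\Delta_0$ coprime to $t-1$), so $a = 0$ and $\Delta_{K_L}(t) \overset{\cdot}{=} 1$.

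The main obstacle will be bookkeeping the precise slope and the induced map on first homology of the covers: one must be careful that the $1$-surgery curve is $\mu_1 + \lambda_1$, that $\lambda_1$ is homologous to $\mu_2$ (this uses $\operatorname{lk}(L_1, L_2) = 1$), and hence that killing this curve is compatible with the specialization $s \mapsto t$ on the level of Alexander modules. A clean alternative, perhaps what the referee had in mind, is to avoid Mayer--Vietoris entirely: present $H_1(E_L; \Z[s^{\pm 1}, t^{\pm 1}])$ by a presentation matrix coming from a Wirtinger or C-complex presentation, observe that performing the $1$-surgery corresponds to adding one relation (the surgery curve) and specializing $s \mapsto t$, and then directly compare the resulting presentation matrix for $H_1(E_{K_L}; \Z[t^{\pm 1}])$ with that of $H_1(E_L; \Z[s^{\pm 1}, t^{\pm 1}])$, reading off that the generator of the $0$-th elementary ideal of the former divides the image of that of the latter up to $(t-1)$-powers; triviality of $\Delta_L$ together with $\Delta_{K_L}(1) \overset{\cdot}{=} 1$ then finishes the argument. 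Either way the content is the same; I would present the Mayer--Vietoris version as it makes the role of the surgery geometrically transparent.
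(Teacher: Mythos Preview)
Your strategy—decompose $E_{K_L}$ as $E_L$ union a solid torus and run Mayer--Vietoris in the infinite cyclic cover—is reasonable, and that part can be made to work: one finds that $H_1(E_{K_L};\Z[t^{\pm1}])$ is a quotient of $H_1(E_L;\Z[t^{\pm1}])$, the single-variable module coming from the composite $\pi_1(E_L)\to\Z^2\to\Z$. The gap is in relating this single-variable module to the two-variable one. First a sign: killing the surgery slope $\mu_1+\lambda_1$ imposes $st=1$, so the specialization is $s\mapsto t^{-1}$, not $s\mapsto t$ (harmless here since $\Delta_L=1$). More seriously, your assertion that the base-changed module has $0$-th characteristic polynomial dividing the image of $\Delta_L(s,t)$ is unjustified, and the naive version fails for general modules: the image of a gcd \emph{divides} the gcd of the images, not conversely. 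Concretely, $M=\Lambda/(st-1,\,s^2-s+1)$ has $\Delta_0(M)=1$, but $M\otimes_\Lambda\Z[t^{\pm1}]$ (via $s\mapsto t^{-1}$) is $\Z[t^{\pm1}]/(t^2-t+1)$, whose $\Delta_0$ is $t^2-t+1$—symmetric with value $1$ at $t=1$, so neither your $(t-1)$-power argument nor the normalization $\Delta_{K_L}(1)=\pm1$ rules it out. Closing this gap requires genuine input about link Alexander modules, not just module algebra.

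The paper's proof avoids all of this with a two-line argument. Let $L'\subset Y_L$ be the link whose components are the core of the surgery solid torus and $K_L$. Then $E_{L'}$ and $E_L$ are literally the same $3$-manifold, so their $\Z\oplus\Z$-covers coincide and $\Delta_{L'}\overset{\cdot}{=}1$ (after a monomial change of variables, which preserves units). Since $L'$ also has linking number $1$, Torres' condition gives $\Delta_{K_L}(t)\overset{\cdot}{=}\Delta_{L'}(1,t)=1$. In effect, Torres' formula packages exactly the base-change statement you were reaching for; your approach amounts to reproving it by hand, and stalls at the point where that proof needs the specific structure of link modules.
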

\begin{proof}Suppose that $L=L_1\sqcup L_2$ is a $2$-component link with linking number $1$, and has trivial Alexander polynomial. Let $Y_L$ be the homology 3-sphere obtained from $S^3$ by doing 1-surgery along $L_1$. Consider the link $L'\subset Y_L$ whose first component is the core of the $1$-surgery on $L_1$ and second component is the image of $L_2$. Note that the second component of $L'$ is the knot $K_L\subset Y_L$.

Since $\Delta_L(s,t)=1$, the $\Z\oplus \Z$-cover of the exterior of $L$ has trivial first homology. The Alexander polynomial of $L'$ is also trivial because the exteriors of $L$ and $L'$ are homeomorphic, so their $\Z\oplus \Z$-covers are homeomorphic. Since the linking number of $L'$ is~1, $\Delta_{K_L}(t)=\Delta_{L'}(1,t)=1$ by Torres' condition \cite[page 57]{Torres:1953-1}. 
\end{proof}

\begin{lemma}\label{lemma:Zconcordance}Let $L=L_1\sqcup L_2$ be a $2$-component link with linking number $1$. If $L$ is smoothly concordant to a $2$-component link $J$ with trivial Alexander polynomial, then the knots $K_L$ and $K_J$ are smoothly concordant in a smooth homology $S^3\times [0,1]$ and $\Delta_{K_J}(t)=1$.
\end{lemma}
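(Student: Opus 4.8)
The plan is to produce both the homology cobordism and the annulus by performing surgery along the link concordance. First I would realize the concordance from $L$ to $J$ as a pair of disjoint, smoothly and properly embedded annuli $A_1\sqcup A_2\subset S^3\times[0,1]$ with $A_i\cap(S^3\times\{0\})=L_i$ and $A_i\cap(S^3\times\{1\})=J_i$; in particular $A_1$ is a smooth concordance from the knot $L_1$ to the knot $J_1$. Let $W$ be the $4$-manifold obtained from $S^3\times[0,1]$ by $+1$-surgery along $A_1$, that is, by removing an open tubular neighbourhood $\nu(A_1)$ and regluing along the framing of $\nu(A_1)$ that restricts on the two ends to the $+1$-framing of $L_1$ and the $+1$-framing of $J_1$. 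This framing exists: the normal bundle of the annulus $A_1$ is trivial, and a concordance carries the $0$-framing of $L_1$ to the $0$-framing of $J_1$ (as one sees from the fact that the $0$-framed longitudes of $L_1$ and $J_1$ die in the complement $X:=(S^3\times[0,1])\setminus\nu(A_1)$, whose first homology is infinite cyclic, generated by the meridian of $A_1$; see below), so the $+1$-framing extends across $A_1$ as well. By construction $\partial W=Y_L\sqcup -Y_J$. Since $A_1$ and $A_2$ are disjoint, we may take $\nu(A_1)$ disjoint from $A_2$, and then $A_2$ is a smoothly and properly embedded annulus in $W$ with $\partial A_2=K_L\sqcup K_J$.

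The main step is to verify that $W$ is an integral homology $S^3\times[0,1]$. By excision, $H_*(S^3\times[0,1],X)\cong H_*(A_1\times D^2,\,A_1\times\partial D^2)$, and a short computation with the long exact sequence of this pair gives $H_*(X)\cong H_*(S^1)$, with $H_1(X)$ generated by the meridian $\mu$ of $A_1$ and with the $0$-framed longitude $\lambda$ of $A_1$ nullhomologous in $X$. Writing $W=X\cup V$, where the reglued piece $V$ is homotopy equivalent to $S^1$ and $X\cap V\simeq T^2$, the meridian of the new solid torus is glued to $\mu+\lambda$, so the map $H_1(X\cap V)\to H_1(X)\oplus H_1(V)$ is given by an integer $2\times2$ matrix of determinant $\pm1$. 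A routine diagram chase in the Mayer--Vietoris sequence then yields $H_*(W)\cong H_*(S^3\times[0,1])$; and since $Y_L$ and $Y_J$ are integral homology spheres, the long exact sequence of the pair $(W,\partial W)$ --- using $H_3(W,\partial W)\cong H^1(W)=0$ --- shows that the inclusions $Y_L\hookrightarrow W$ and $Y_J\hookrightarrow W$ induce isomorphisms on homology. Thus $A_2$ realizes a smooth concordance from $K_L$ to $K_J$ inside the smooth homology $S^3\times[0,1]$ $W$.

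Finally, $\Delta_{K_J}(t)=1$: concordance of links preserves linking number, so $J$ is a $2$-component link with linking number $1$, and combining $\Delta_J(s,t)=1$ with Lemma~\ref{lemma:Alexofblowdown} applied to $J$ gives $\Delta_{K_J}(t)=1$. I expect the only genuinely delicate point to be the middle paragraph --- the framing bookkeeping for $A_1$ and the determinant-$\pm1$ Mayer--Vietoris computation --- since the construction of $W$ by surgery along a concordance is standard.
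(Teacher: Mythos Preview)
Your proposal is correct and follows essentially the same route as the paper: perform $1$-surgery on $S^3\times[0,1]$ along the first concordance annulus, observe that the second annulus survives as a concordance from $K_L$ to $K_J$, check via Alexander duality/excision and Mayer--Vietoris that the resulting $4$-manifold is an integral homology $S^3\times[0,1]$, and invoke Lemma~\ref{lemma:Alexofblowdown} for $\Delta_{K_J}(t)=1$. The paper's proof is a one-paragraph sketch that simply asserts the homology computation; you have unpacked the framing compatibility and the Mayer--Vietoris matrix, which is exactly the content hidden behind that assertion.
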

\begin{proof}Suppose that $L$ is  smoothly concordant to $J$ with $\Delta_J(s,t)=1$ via a smooth concordance $C_1\,\sqcup\, C_2\subset S^3\times [0,1]$. Let $Z$ be the result of $1$-surgery of $S^3\times [0,1]$ along $C_1$. Note that $\d Z=Y_L\sqcup -Y_J$. By Alexander duality and Mayer-Vietoris sequence, $Z$ is a smooth homology $S^3\times [0,1]$. The image of $C_2$ in $Z$ gives a smooth concordance between the results of surgery $K_L$ and $K_J$ in $Z$. By Lemma~\ref{lemma:Alexofblowdown}, $\Delta_{K_J}(t)=1$. 
\end{proof}
\begin{remark}An analogous statement of Lemma~\ref{lemma:Zconcordance} also holds in the topological category.\end{remark}

\subsection{Hedden-Livingston-Ruberman obstruction and its link analogue}
Theorem \ref{theorem:B} is inspired by the main obstruction theorem of \cite{Hedden-Livingston-Ruberman:2012-1} which we recall in Theorem~\ref{theorem:Hedden-Livingston-Ruberman}. For the reader's convenience, we recall some necessary definitions following \cite[Sections 2--3]{Hedden-Livingston-Ruberman:2012-1}. For more details, see \cite{Hedden-Livingston-Ruberman:2012-1}.

Let $Y$ be a $\Z_2$-homology 3-sphere. Recall that $Y$ has a non-singular $\Q/\Z$-valued linking form 
\[\lambda_Y\colon H_1(Y)\times H_1(Y)\to \Q/\Z,\]
which is the adjoint of the following composition of isomorphisms from Poincar\'{e} duality, the Bockstein long exact sequence and the universal coefficient theorem,
\[H_1(Y)\xrightarrow{\operatorname{PD}}H^2(Y)\xrightarrow{\beta^{-1}} H^1(Y;\Q/\Z)\xrightarrow{\operatorname{UCT}} \Hom_\Z(H_1(Y),\Q/\Z).\]
A subgroup $M$ of $H_1(Y)$ is called a \emph{metabolizer for the linking form} $\lambda_Y\colon H_1(Y)\to H_1(Y)\to \Q/\Z$ if $M=M^\perp$ where
\[M^\perp=\{x\in H_1(Y)\mid \lambda_Y(x,y)=0\textrm{ for all }y\in M\}.\]
\begin{definition}For $z\in H_1(Y)$, let $\mathfrak{s}_z$ be the unique Spin$^c$ structure of $Y$ which satisfies $c_1(\mathfrak{s}_z)=2\hat{z}\in H^2(Y)$ where $\hat{z}$ is the Poincar\'{e} dual of $z$. In particular, $\mathfrak{s}_0$ is the unique Spin structure on $Y$. Define $\overline{d}(Y,\mathfrak{s}_z):=d(Y,\mathfrak{s}_z)-d(Y,\mathfrak{s}_0)$.
\end{definition}
We remark that the $\overline{d}$-invariant that we use in this paper is different from the $\overline{d}$-invariant coming from involutive Heegaard Floer homology \cite{Hendricks-Manolescu:2017-1}.

We will use the following theorem which is analogous to \cite[Theorem~3.2]{Hedden-Livingston-Ruberman:2012-1}. The proof of \cite[Theorem~3.2]{Hedden-Livingston-Ruberman:2012-1} based on \cite[Proposition~2.1]{Hedden-Livingston-Ruberman:2012-1} can be easily adapted, so we leave the proof of Theorem~\ref{theorem:Hedden-Livingston-Ruberman} to the reader.
\begin{theorem}\label{theorem:Hedden-Livingston-Ruberman} Let $Y$ be a $\Z_2$-homology $3$-sphere. If there is a smooth $\Z_2$-homology $4$-ball $W$ such that $\d W=Y\# Z$ for some homology $3$-sphere $Z$, then there is a metabolizer $M\subset H_1(Y)$ for the linking form 
\[\lambda_Y\colon H_1(Y)\times H_1(Y)\to \Q/\Z\]
 such that $\overline{d}(Y,\mathfrak{s}_m)=0$ for all $m\in M$.
\end{theorem}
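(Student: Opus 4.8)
The plan is to adapt the proof of \cite[Theorem~3.2]{Hedden-Livingston-Ruberman:2012-1} --- which rests on the correction-term inequality \cite[Proposition~2.1]{Hedden-Livingston-Ruberman:2012-1} --- while carrying the extra connected summand $Z$ along. Write $N=Y\#Z$. Since $Z$ is an integral homology $3$-sphere, $N$ is again a $\Z_2$-homology $3$-sphere (in particular a rational homology sphere), the evident isomorphism $H_1(N)\cong H_1(Y)$ carries $\lambda_N$ to $\lambda_Y$, the unique spin structure on $N$ restricts to the spin structures on $Y$ and on $Z$, and the correction term is additive under connected sum, so that $d(N,\mathfrak{s})=d(Y,\mathfrak{s}|_Y)+d(Z,\mathfrak{s}_0)$ for the unique spin$^c$ structure $\mathfrak{s}_0$ on $Z$. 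It therefore suffices to produce a metabolizer $M\subseteq H_1(N)$ for $\lambda_N$ with $d(N,\mathfrak{s}_m)=d(N,\mathfrak{s}_0)$ for every $m\in M$: subtracting these two equalities cancels the $d(Z,\mathfrak{s}_0)$ terms and yields $\overline{d}(Y,\mathfrak{s}_m)=0$ for all $m\in M$, which is the assertion.

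First I would record the elementary consequences of $W$ being a smooth $\Z_2$-homology ball: $b_1(W)=b_2(W)=0$, the groups $H_1(W;\Z)$ and $H_2(W;\Z)$ are finite of odd order, $H^2(W;\Q)=0$, and $H^2(W;\Z_2)=0$, so that $w_2(W)=0$ and $W$ carries a unique spin structure $\mathfrak{t}_0$ (with $c_1(\mathfrak{t}_0)=0$). Let $j\colon N\hookrightarrow W$ be the inclusion and set $M:=\Ker\bigl(j_*\colon H_1(N)\to H_1(W)\bigr)$. Next I would verify that $M$ is a metabolizer for $\lambda_N$ by the usual ``half lives, half dies'' argument: the long exact sequence of the pair $(W,N)$, Poincar\'e--Lefschetz duality $H_k(W,N)\cong H^{4-k}(W)$, and the universal coefficient theorem, run exactly as in the rational homology ball case --- the only point being that $H_1(N)$ has odd order, so no $2$-primary phenomenon arises even though $W$ is merely a $\Z_2$-homology ball.

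Now fix $m\in M$; the key step is to show that $\mathfrak{s}_m$ on $N$ extends over $W$. Since $j_*m=0$, exactness of $H_2(W,N)\to H_1(N)\xrightarrow{j_*}H_1(W)$ lets me write $m$ as the image of a class in $H_2(W,N)\cong H^2(W)$; letting $\alpha\in H^2(W)$ be the corresponding cohomology class, compatibility of the connecting homomorphism with the duality isomorphism shows that $\alpha|_N$ is Poincar\'e dual to $m$ (up to a sign, which is immaterial below). Then $\mathfrak{t}:=\mathfrak{t}_0+\alpha$ is a spin$^c$ structure on $W$ with $c_1(\mathfrak{t})=2\alpha$, hence $c_1(\mathfrak{t}|_N)=2\,\hat m=c_1(\mathfrak{s}_m)$; since $H^2(N;\Z)$ has no $2$-torsion, the first Chern class distinguishes spin$^c$ structures on $N$, and therefore $\mathfrak{t}|_N=\mathfrak{s}_m$. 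Taking $m=0$ shows in the same way that $\mathfrak{s}_0$ extends over $W$.

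Finally I would invoke the mechanism behind \cite[Proposition~2.1]{Hedden-Livingston-Ruberman:2012-1}, namely the Ozsv\'ath--Szab\'o correction-term inequality for negative-definite four-manifolds, applied to both $W$ and $\overline{W}$: each is (vacuously) negative-definite with $b_1=b_2=0$, and any spin$^c$ structure $\mathfrak{t}$ on $W$ extending a given $\mathfrak{s}$ on $N$ has $c_1(\mathfrak{t})^2=0$ because $H^2(W;\Q)=0$. The inequality then gives $d(N,\mathfrak{s})\ge 0$, and applying it to $\overline{W}$ gives $-d(N,\mathfrak{s})=d(-N,\overline{\mathfrak{s}})\ge 0$; hence $d(N,\mathfrak{s})=0$ for every spin$^c$ structure $\mathfrak{s}$ on $N$ extending over $W$. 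By the previous paragraph this applies to $\mathfrak{s}_m$ and to $\mathfrak{s}_0$, so $d(N,\mathfrak{s}_m)=0=d(N,\mathfrak{s}_0)$ for all $m\in M$, and the reduction of the first paragraph gives $\overline{d}(Y,\mathfrak{s}_m)=0$ for all $m\in M$. The only genuinely new ingredient relative to \cite{Hedden-Livingston-Ruberman:2012-1} is the presence of the summand $Z$, which is harmless because $Z$ is an integral homology sphere; I expect the ``half lives, half dies'' verification, though entirely standard, to be the most laborious step to write out.
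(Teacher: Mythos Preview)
Your proposal is correct and is exactly the adaptation the paper has in mind: the paper does not spell out a proof, but explicitly says that the argument of \cite[Theorem~3.2]{Hedden-Livingston-Ruberman:2012-1} based on \cite[Proposition~2.1]{Hedden-Livingston-Ruberman:2012-1} ``can be easily adapted'' and leaves it to the reader. Your write-up carries this out faithfully, handling the extra summand $Z$ via additivity of $d$ under connected sum and noting that a $\Z_2$-homology ball is in particular a rational homology ball so that the standard metabolizer and Ozsv\'ath--Szab\'o inequality arguments go through unchanged.
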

Now we prove Theorem \ref{theorem:B} whose precise version is given in Theorem \ref{theorem:mainobstruction}.
\begin{theorem}\label{theorem:mainobstruction}Let $L=L_1\sqcup L_2$ be a $2$-component link with linking number $1$. Let $Y_L$ be the $3$-manifold obtained by doing $1$-surgery on $S^3$ along $L_1$. Denote the image of $L_2$ in $Y_L$ by $K_L$. Let $\Sigma_{K_L}$ be the $2$-fold cover of $Y_L$ branched along $K_L$.  If $L$ is smoothly concordant to a link with trivial Alexander polynomial, then there is a metabolizer $M$ for the linking form 
\[\lambda_{\Sigma_{K_L}}\colon H_1(\Sigma_{K_L})\times H_1(\Sigma_{K_L})\to \Q/\Z\]
such that $\overline{d}(\Sigma_{K_L},\mathfrak{s}_m)=0$ for all $m\in M$.
\end{theorem}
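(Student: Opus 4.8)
The plan is to reduce Theorem~\ref{theorem:mainobstruction} to Theorem~\ref{theorem:Hedden-Livingston-Ruberman} by producing a smooth $\Z_2$-homology $4$-ball whose boundary is $\Sigma_{K_L}$, up to connected sum with a homology sphere. First I would invoke Lemma~\ref{lemma:Zconcordance}: since $L$ is smoothly concordant to a link $J$ with $\Delta_J(s,t)=1$, the knot $K_L\subset Y_L$ is smoothly concordant to $K_J\subset Y_J$ inside a smooth homology $S^3\times[0,1]$, call it $Z$, and moreover $\Delta_{K_J}(t)=1$. The key point is that $K_J$, being a knot with trivial Alexander polynomial sitting in a homology sphere, should have the property that its branched double cover $\Sigma_{K_J}$ bounds a smooth $\Z_2$-homology $4$-ball — indeed, one expects a homology-theoretic argument (an analogue of the fact that the double branched cover of an Alexander-polynomial-one knot in $S^3$ is a $\Z_2$-homology sphere bounding a $\Z_2$-homology ball via the standard surgery description, or more simply via a Mayer--Vietoris/half-lives-half-dies argument applied to an appropriate filling).

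The core of the argument is the following cobordism construction. Taking the smooth concordance $A\subset Z$ between $K_L$ and $K_J$, I would form the double cover of $Z$ branched along $A$; call it $\Sigma_A$. Then $\Sigma_A$ is a smooth cobordism from $\Sigma_{K_L}$ to $\Sigma_{K_J}$. One checks, using that $Z$ is a homology $S^3\times[0,1]$ and that $A$ is a concordance (an annulus, so $H_*(A)$ matches $H_*(S^1)$), together with a Mayer--Vietoris computation for the branched cover, that $\Sigma_A$ is a $\Z_2$-homology cobordism, i.e. the inclusions of each boundary component induce $\Z_2$-homology isomorphisms. Capping off the $\Sigma_{K_J}$ end with the $\Z_2$-homology $4$-ball it bounds then yields a smooth $\Z_2$-homology $4$-ball $W$ with $\d W = \Sigma_{K_L}$ (or $\Sigma_{K_L}\# Z'$ for some homology sphere $Z'$, which is exactly the hypothesis format of Theorem~\ref{theorem:Hedden-Livingston-Ruberman}). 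Applying that theorem produces the desired metabolizer $M$ for the linking form on $H_1(\Sigma_{K_L})$ with $\overline{d}(\Sigma_{K_L},\mathfrak{s}_m)=0$ for all $m\in M$, which is the conclusion.

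The main obstacle I expect is verifying the $\Z_2$-coefficient homology bookkeeping for the branched double cover $\Sigma_A$ of a concordance in a homology $S^3\times[0,1]$ — in particular showing that $\Sigma_A$ is a $\Z_2$-homology product and not merely that its boundary components are $\Z_2$-homology spheres. This requires care with the transfer/Smith sequences relating $H_*(\Sigma_A;\Z_2)$ to $H_*(Z;\Z_2)$ and $H_*(Z\setminus\nu(A);\Z_2)$, and with the fact that branched covers can introduce $2$-torsion; one wants precisely that no new $\Z_2$-homology appears. A secondary, more routine point is confirming that $\Sigma_{K_J}$ bounds a smooth $\Z_2$-homology ball given $\Delta_{K_J}(t)=1$ — this is where triviality of the Alexander polynomial of $K_J$ (rather than of the original link) is essential, and it is exactly the reason Lemma~\ref{lemma:Zconcordance} was proved in the form stated.
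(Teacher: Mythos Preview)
Your overall approach matches the paper's: invoke Lemma~\ref{lemma:Zconcordance} to get a concordance in a homology $S^3\times[0,1]$, take the branched double cover to obtain a $\Z_2$-homology cobordism from $\Sigma_{K_L}$ to $\Sigma_{K_J}$, and then feed this into Theorem~\ref{theorem:Hedden-Livingston-Ruberman}. The one misstep is your plan to ``cap off the $\Sigma_{K_J}$ end with the $\Z_2$-homology $4$-ball it bounds.'' There is no reason $\Sigma_{K_J}$ should bound such a ball: an integral homology $3$-sphere need not bound a $\Z_2$-homology $4$-ball (the $d$-invariant, for instance, obstructs this), and nothing about $\Delta_{K_J}(t)=1$ produces one.

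What you actually need---and what your parenthetical alternative is reaching for---is simpler and avoids this issue entirely. Since $\Delta_{K_J}(t)=1$, the branched double cover $\Sigma_{K_J}$ is an integral homology sphere (its first homology has order $|\Delta_{K_J}(-1)|=1$). Now just remove an open neighborhood of an arc in the cobordism $\Sigma_A$ running from one boundary component to the other; the result is a smooth $\Z_2$-homology $4$-ball $W$ with $\partial W = -\Sigma_{K_L}\#\Sigma_{K_J}$, already in the form $Y\# Z$ with $Z$ a homology sphere that Theorem~\ref{theorem:Hedden-Livingston-Ruberman} requires. No capping is needed, and the ``secondary, more routine point'' you flag simply disappears. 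This is exactly how the paper proceeds.
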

\begin{proof}We continue to use notations used in the proof of Lemma \ref{lemma:Zconcordance}. Suppose that $L$ is concordant to a 2-component link $J$ with trivial Alexander polynomial via a concordance $C_1\sqcup C_2$. By doing 1-surgery on the first component $C_1$ of the concordance, we obtain a concordance $C$ from $K_L$ to $K_J$ in $Z$ where $Z$ is a $\Z$-homology $S^3\times [0,1]$. By Lemma \ref{lemma:Zconcordance}, the knot $K_J$ satisfies $\Delta_{K_J}(t)=1$. The double branched cover $\Sigma_C$ of $Z$ branched along $C$ is a $\Z_2$-homology cobordism from $\Sigma_{K_L}$ to $\Sigma_{K_J}$. Since $\Delta_{K_J}(t)=1$, $\Sigma_{K_J}$ is an integral homology 3-sphere. By removing an arc in $\Sigma_{C}$ joining $\Sigma_{K_L}$ and $\Sigma_{K_J}$, we obtain a smooth $\Z_2$-homology 4-ball $W$ whose boundary is $\Sigma_{K_J}\# -\Sigma_{K_L}$. By applying Theorem~\ref{theorem:Hedden-Livingston-Ruberman}, the conclusion follows.
\end{proof}

\section{Computation of $\overline{d}$-invariants}\label{section:d-invariants}
Our computation of $\overline{d}$-invariants has several ingredients.
\subsection{Knot Floer complexes}
Heegaard Floer homology associates a filtered chain complex $CF^\infty$ to an appropriate Heegaard diagram for a closed three-manifold \cite{Ozsvath-Szabo:2004-0}. We call this filtration the {\it algebraic filtration}, to distinguish it below. The homology of this and various sub- and quotient complexes are invariants of the three-manifold. From this, Ozsv\'ath and Szab\'o define a correction term \( d(Y,\mathfrak{t})\in \mathbb{Q} \) associated to a rational homology sphere $Y$ with Spin$^c$ structure $\mathfrak{t}$ \cite{Ozsvath-Szabo:2003-2}.

Ozsv\'ath and Szab\'o, and independently Rasmussen, showed that a knot $K\subset Y$ can be used to define a second filtration, which we call the {\it Alexander filtration} and denote by $A$, yielding a $\Z\oplus\Z$-filtered chain complex $CFK^\infty(Y,K)$, defined up to filtered chain homotopy equivalence \cite{Ozsvath-Szabo:2004-1,Rasmussen:2003-1}.  In the case of $K\subset S^3$, we will simply write $CFK^\infty(K)$. We will denote the grading on this complex by $M$. This complex can be used to compute Heegaard Floer homology of surgeries along $K$ \cite{Ozsvath-Szabo:2008-1,Ozsvath-Szabo:2011-1}. 

We will represent $CFK^\infty(K)$ in the $(i,j)$-plane. The complex is finitely generated over $\mathbb{F}[U,U^{-1}]$, where $U$ is a formal variable, and $\mathbb{F}$ denotes the field with two elements. Each generator $x$ has an Alexander filtration level $A(x)$, and we represent $x$ as a dot at $(0,A(x))$, and, for $i\in \Z$, we represent the homogeneous element $U^ix$ as a dot at $(-i,A(x)-i)$ (that is, multiplication by $U$ lowers each of the filtration levels by 1). The differential $\partial(U^ix)$ is a finite sum of homogeneous elements, which we represent by drawing an arrow from $U^ix$ to each element. Thus the Alexander filtration is seen vertically in the plane, and the algebraic filtration is seen horizontally; the filtration is manifested by the fact that the arrows must not go up or to the right. See Figure \ref{figure:cfktrefoil} for an example. 

\begin{figure}
\begin{tikzpicture}
\filldraw[black] (0,1) circle (2pt) node[anchor=south west]{$x$};
\filldraw[black] (-1,0) circle (2pt) node[anchor=south west]{$Ux$};
\filldraw[black] (0,0) circle (2pt) node[anchor=south west]{$y$};
\filldraw[black] (0,-1) circle (2pt) node[anchor=south west]{$z$};
\filldraw[black] (1,1) circle (2pt);
\filldraw[black] (1,0) circle (2pt);
\filldraw[black] (-1,-1) circle (2pt);
\filldraw[black] (-1,-2) circle (2pt);
\filldraw[black] (-2,-1) circle (2pt);
\node at (0,-3.5) {$i=0$};
\node at (2.5,3) {$\underline{A}$};
\node at (2.5,2) {$2$};
\node at (2.5,1) {$1$};
\node at (2.5,0) {$0$};
\node at (2.5,-1) {$-1$};
\node at (2.5,-2) {$-2$};
\draw[dotted,->] (1.2,1.2)--(1.8,1.8);
\draw[dotted,->] (-2,-2)--(-2.6,-2.6);
\draw[->] (1,1)--(1,0.2);
\draw[->] (1,1)--(0.2,1);
\draw[->] (0,0)--(0,-0.8);
\draw[->] (0,0)--(-0.8,0);
\draw[->] (-1,-1)--(-1,-1.8);
\draw[->] (-1,-1)--(-1.8,-1);
\end{tikzpicture}
\caption{ The knot Floer complex of the right-handed trefoil, $CFK^\infty(T_{2,3})$. It is freely generated over $\mathbb{F}[U,U^{-1}]$ by $x, y$ and $z$. The differential of both $x$ and $z$ vanishes, while $\partial y = Ux+z$. The homological gradings (not shown) are determined by the fact that $M(x)=0$, $\partial$ decreases $M$ by 1, and multiplication by $U$ decreases $M$ by 2. }
\label{figure:cfktrefoil}
\end{figure}
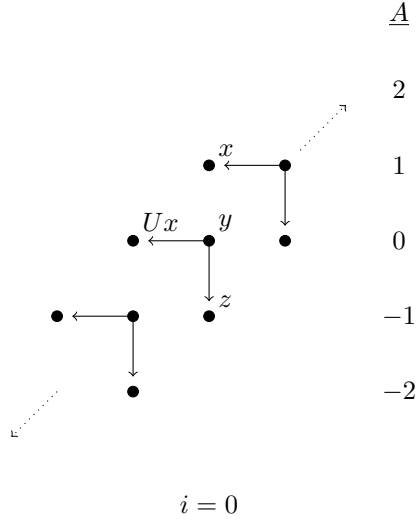

Suppose $S$ is a subset of $\Z\oplus \Z$ with the property that \[ (i,j)\in S \implies (i-m,j-n) \in S \text{ for all } m,n \geq 0.\] Then the subset of $C$ generated (over $\mathbb{F}$) by the elements with $(i,j)$-coordinates in $S$ is a filtered subcomplex, which we will denote $CS$. Thus $C\{i\leq 0\}$ is the subcomplex consisting of everything on or to the left of the $j$-axis, and $C\{i\leq 0, j\leq k\}$ is the ``third quadrant" shaped subcomplex, as seen in Figure \ref{figure:cfksubset}. 

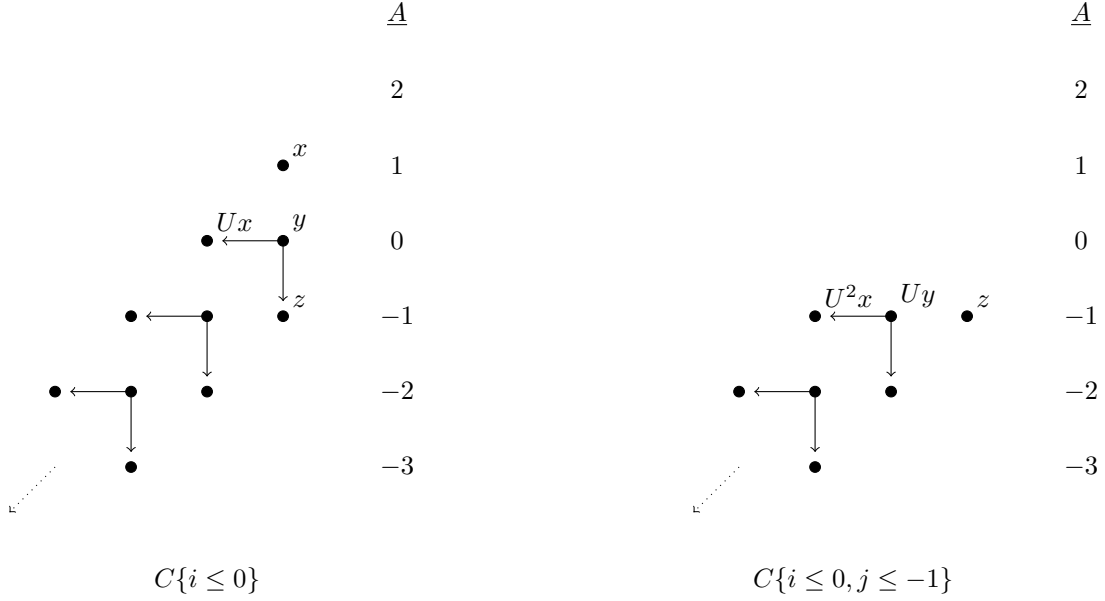
\begin{figure}
\begin{tikzpicture}
\filldraw[black] (0,1) circle (2pt) node[anchor=south west]{$x$};
\filldraw[black] (0,0) circle (2pt) node[anchor=south west]{$y$};
\filldraw[black] (0,-1) circle (2pt) node[anchor=south west]{$z$};
\filldraw[black] (-1,0) circle (2pt) node[anchor=south west]{$Ux$};
\filldraw[black] (9,-1) circle (2pt) node[anchor=south west]{$z$};
\filldraw[black] (8,-1) circle (2pt) node[anchor=south west]{$Uy$};
\filldraw[black] (7,-1) circle (2pt) node[anchor=south west]{$U^2x$};
\filldraw[black] (-1,-1) circle (2pt);
\filldraw[black] (-1,-2) circle (2pt);
\filldraw[black] (-2,-1) circle (2pt);
\filldraw[black] (-2,-2) circle (2pt);
\filldraw[black] (-2,-3) circle (2pt);
\filldraw[black] (-3,-2) circle (2pt);
\filldraw[black] (8,-2) circle (2pt);
\filldraw[black] (7,-2) circle (2pt);
\filldraw[black] (7,-3) circle (2pt);
\filldraw[black] (6,-2) circle (2pt);
\node at (-1,-4.5) {$C\{ i\leq 0\}$};
\node at (7.5,-4.5) {$C\{ i\leq 0,j\leq -1\}$};
\node at (1.5,3) {$\underline{A}$};
\node at (1.5,2) {$2$};
\node at (1.5,1) {$1$};
\node at (1.5,0) {$0$};
\node at (1.5,-1) {$-1$};
\node at (1.5,-2) {$-2$};
\node at (1.5,-3) {$-3$};
\node at (10.5,3) {$\underline{A}$};
\node at (10.5,2) {$2$};
\node at (10.5,1) {$1$};
\node at (10.5,0) {$0$};
\node at (10.5,-1) {$-1$};
\node at (10.5,-2) {$-2$};
\node at (10.5,-3) {$-3$};
\draw[->] (0,0)--(0,-0.8);
\draw[->] (0,0)--(-0.8,0);
\draw[->] (-1,-1)--(-1,-1.8);
\draw[->] (-1,-1)--(-1.8,-1);
\draw[->] (-2,-2)--(-2,-2.8);
\draw[->] (-2,-2)--(-2.8,-2);
\draw[->] (8,-1)--(8,-1.8);
\draw[->] (8,-1)--(7.2,-1);
\draw[->] (7,-2)--(7,-2.8);
\draw[->] (7,-2)--(6.2,-2);
\draw[dotted,->] (-3,-3)--(-3.6,-3.6);
\draw[dotted,->] (6,-3)--(5.4,-3.6);
\end{tikzpicture}
\caption{ Filtered subcomplexes of $C=CFK^\infty(T_{2,3})$.}
\label{figure:cfksubset}
\end{figure}

Our interest in knot Floer complexes here will be to compute $d$-invariants of surgeries. Ozsv\'ath and Szab\'o showed that the Heegaard Floer homology of $S^3_{p/q}(K)$ can be computed as the homology of a mapping cone involving such subcomplexes or their corresponding quotients. See \cite{Ozsvath-Szabo:2011-1} for full details;  we will review what is relevant for our purposes below.

\subsection{$V_i$-sequences}\label{section:visequence}

If $C = CFK^\infty(K)$ for some knot $K\subset S^3$, then \(H_*(C\{i\leq 0\})\) is isomorphic to a ``tower" $\mathcal{T^-}$;  that is, isomorphic to $\mathbb{F}[U]$, with 1 supported in grading zero.

Let \[v_k: C\{i\leq 0, j\leq k \} \to C\{ i\leq 0\} \] be inclusion. We define 
\begin{equation}\label{equation:vkdefinition}
V_k(K):= \max \{ M(x) \mid (v_k)_*(x) \in \mathcal{T}^- , \ (v_k)_*(x)\neq 0 \}, 
\end{equation}
where $M$ is the homological grading. Thus, from $CFK^\infty(K)$, we get a sequence of nonnegative integers $\{V_k(K)\}$. These were introduced by Rasmussen \cite[Definition 7.1 and following discussion]{Rasmussen:2003-1} (as ``local $h$-invariants"), and then, with the notation here, by Ni and Wu in their study of cosmetic surgeries \cite{Ni-Wu:2015-1}.


  Here we record some useful properties of the $V_k$'s.

\begin{proposition}[{\cite[Proposition 1.6 and Remark 2.10]{Ni-Wu:2015-1}}]\label{proposition:Viformula}Let $p,q$ be relatively prime integers and $i\in \Z_p$. Denote the unknot by $U$. For any knot $K$,  \[ d(S^3_{p/q}(K),\mathfrak{t}_i) = d(S^3_{p/q}(U),\mathfrak{t}_i)-2\max \{V_{\lfloor\frac{i}{q}\rfloor}(K),V_{\lfloor\frac{p+q-1-i}{q}\rfloor}(K)\}. \]
\end{proposition}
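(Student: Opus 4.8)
The plan is to deduce this from the Ozsv\'ath--Szab\'o rational surgery formula \cite{Ozsvath-Szabo:2008-1,Ozsvath-Szabo:2011-1} together with the monotonicity and stabilization of the sequence $\{V_k\}$; I take $p/q>0$, the case $p/q<0$ being handled by the analogous mapping cone. That formula presents $HF^+(S^3_{p/q}(K),\mathfrak{t}_i)$ as the homology of a mapping cone $\mathbb{X}^+_i$ whose vertices are, on one side, the large-surgery complexes $A^+_s=C\{\max(i,j-s)\geq 0\}$ (the $i,j$ here being the plane coordinates introduced above, not the $\mathrm{Spin}^c$ label) and, on the other, copies $B^+_s=C\{i\geq 0\}\simeq CF^+(S^3)$, joined by the vertical projection $v_s\colon A^+_s\to B^+_s$ and the horizontal map $h_s\colon A^+_s\to B^+_{s+1}$ (projection to $C\{j\geq s\}$ followed by the chain homotopy equivalence $C\{j\geq s\}\simeq C\{i\geq 0\}$ coming from the reflection of $CFK^\infty(K)$ that exchanges the two filtrations); here $s$ ranges over a set of integers determined by $i,p,q$. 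Each $H_*(A^+_s)$ splits as $\mathcal{T}^+\oplus(\text{finite})$, and on the summand $\mathcal{T}^+$ the maps induced by $v_s$ and $h_s$ are multiplication by $U^{V_s(K)}$ and $U^{H_s(K)}$ respectively, where $V_s$ is the invariant of \eqref{equation:vkdefinition} and $H_s(K)=V_{-s}(K)$ comes from the same reflection.

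First I would perform the standard truncation: since $V_s=0$ for $s\geq g(K)$ and $H_s=0$ for $s\leq -g(K)$, the maps $v_s$ (respectively $h_s$) induce isomorphisms on towers once $s$ is sufficiently large (respectively small), so $\mathbb{X}^+_i$ is quasi-isomorphic to a finite zig-zag built from finitely many towers $\mathcal{T}^+$ and finite-dimensional pieces. Next I would locate the bottom of the surviving tower: $d(S^3_{p/q}(K),\mathfrak{t}_i)$ is the minimal grading of a nonzero element in the image of $HF^\infty\to HF^+$, and in the truncated zig-zag that class is traced out in the usual way, so that its grading equals the grading of the corresponding class for the unknot $U$---for which every $V_s$ and $H_s$ vanishes, so that $\mathbb{X}^+_i$ recovers $d(S^3_{p/q}(U),\mathfrak{t}_i)$---shifted downward by twice the largest $U$-power forced along the relevant path through the cone.

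The delicate point, which I expect to be the main obstacle, is to identify this largest forced power with $\max\{V_{\lfloor i/q\rfloor}(K),\,H_{\lfloor(i-p)/q\rfloor}(K)\}$ for $i\in\{0,\dots,p-1\}$. This needs a look at the ``staircase'' shape of the truncated cone to determine which two vertices actually control the bottom of the tower, together with the monotonicity $V_0\geq V_1\geq\cdots$ (equivalently, $H_s=V_{-s}$ is nondecreasing), which forces the vertical contribution to be maximized at the innermost vertex $A^+_{\lfloor i/q\rfloor}$ and the horizontal contribution at the opposite extreme $A^+_{\lfloor(i-p)/q\rfloor}$. To finish, I would rewrite the second term using $H_{\lfloor(i-p)/q\rfloor}(K)=V_{-\lfloor(i-p)/q\rfloor}(K)=V_{\lceil(p-i)/q\rceil}(K)=V_{\lfloor(p+q-1-i)/q\rfloor}(K)$, giving the stated formula. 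Aside from this combinatorial analysis, every step is a definition recalled above or a direct consequence of the cited surgery formula.
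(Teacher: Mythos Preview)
The paper does not prove this proposition at all: it is quoted with a citation to Ni--Wu \cite{Ni-Wu:2015-1} and used as a black box. Your sketch is essentially the Ni--Wu argument---truncate the rational-surgery mapping cone using the eventual vanishing of $V_s$ and $H_s$, compare the resulting finite zig-zag to the unknot's, and use monotonicity to see that the shift on the bottom of the tower is governed by the innermost $A^+$-vertex on each side---so there is nothing to contrast methodologically. The arithmetic you do at the end, $H_{\lfloor (i-p)/q\rfloor}=V_{-\lfloor (i-p)/q\rfloor}=V_{\lceil (p-i)/q\rceil}=V_{\lfloor (p+q-1-i)/q\rfloor}$, is correct; just be aware that the Ni--Wu statement (and the paper's use of it) is for $p,q>0$, so your parenthetical about $p/q<0$ is unnecessary here and would require a separate (mirror) argument if you actually wanted it.
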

Here $\mathfrak{t}_i$ above refers to a specific Spin$^c$-structure; see Remark \ref{remark:spinclabel} for a discussion of this label.
\begin{remark}\label{remark:spinclabel} For any knot $K$, there is an identification $\varphi\colon \operatorname{Spin}^c(S^3_{p/q}(K))\to\Z_p$ given in \cite[Sections 4 and 7]{Ozsvath-Szabo:2011-1}. Denote the Spin$^c$ structure of $S^3_{p/q}(K)$ which corresponds to $i\in \Z_p$ under $\varphi$ by $\mathfrak{t}_i$. We recall some facts on $\mathfrak{t}_i$ (see \cite[Appendix~B]{Cochran-Horn:2015-1}).  The difference between two Spin$^c$ structures of $S^3_{p/q}(K)$ is given by the formula 
\[\mathfrak{t}_{j}=\mathfrak{t}_i+(j-i)q^*\hat{\mu}\]
where $q^*$ is an integer such that $qq^*\equiv 1\pmod{p}$ and $\hat{\mu}\in H^2(S^3_{p/q}(K))$ is the Poincar\'{e} dual of the homology class of a meridian of $K$. When $p$ is odd, it is known that the Spin structure of $S^3_{p/q}(K)$ is $\mathfrak{t}_i$ where $i$ is the mod $p$ reduction of the unique element in $\Z\cap \{\frac{q-1}{2},\frac{p+q-1}{2}\}$.
\end{remark}

\begin{proposition}[{\cite[Proposition 7.6]{Rasmussen:2003-1}}]\label{proposition:Videcreasing}
For each $k$, \( V_k(K) -1 \leq V_{k+1}(K) \leq V_k(K)\).
\end{proposition}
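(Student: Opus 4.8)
The plan is to translate the definition of $V_k(K)$ into a statement about the image of $(v_k)_*$ and then read both inequalities off from elementary $\mathbb{F}[U]$-module algebra. Since $v_k$ is a chain map of degree $0$ commuting with the $U$-action, the induced map $(v_k)_*\colon H_*(C\{i\le 0,\,j\le k\})\to H_*(C\{i\le 0\})\cong\mathcal{T}^-$ is a grading-preserving homomorphism of $\mathbb{F}[U]$-modules, so its image is a graded $\mathbb{F}[U]$-submodule of $\mathcal{T}^-\cong\mathbb{F}[U]$. Every nonzero such submodule has the form $U^m\mathcal{T}^-$ for a unique integer $m\ge 0$, and unwinding the definition this $m$ is precisely $V_k(K)$; that is, $\operatorname{Im}(v_k)_*=U^{V_k(K)}\mathcal{T}^-$. (The image is nonzero: for $k$ at least the top Alexander filtration level of a finite generating set of $CFK^\infty(K)$ over $\mathbb{F}[U,U^{-1}]$ one has $C\{i\le 0,\,j\le k\}=C\{i\le 0\}$, so $(v_k)_*$ is the identity and $V_k(K)=0$; for smaller $k$, iterating the $U$-compatibility of Step~2 below shows $\operatorname{Im}(v_k)_*\neq 0$, so $V_k(K)$ is a well-defined nonnegative integer.) With this reformulation it suffices to prove the two submodule inclusions $U^{V_k}\mathcal{T}^-\subseteq U^{V_{k+1}}\mathcal{T}^-$ and $U^{V_{k+1}+1}\mathcal{T}^-\subseteq U^{V_k}\mathcal{T}^-$, which are exactly $V_{k+1}\le V_k$ and $V_k\le V_{k+1}+1$.

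Step~1 (the inequality $V_{k+1}\le V_k$). I would use the tautological inclusion of filtered subcomplexes $\iota\colon C\{i\le 0,\,j\le k\}\hookrightarrow C\{i\le 0,\,j\le k+1\}$; this is a $U$-equivariant chain map of degree $0$ because the differential on $CFK^\infty(K)$ does not increase $j$, and it satisfies $v_k=v_{k+1}\circ\iota$. Hence $(v_k)_*=(v_{k+1})_*\circ\iota_*$, so $\operatorname{Im}(v_k)_*\subseteq\operatorname{Im}(v_{k+1})_*$, i.e. $U^{V_k}\mathcal{T}^-\subseteq U^{V_{k+1}}\mathcal{T}^-$, which gives $V_{k+1}\le V_k$.

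Step~2 (the inequality $V_k\le V_{k+1}+1$). Here I would use multiplication by $U$. On $CFK^\infty(K)$ the operator $U$ translates the $(i,j)$-plane by $(-1,-1)$, so it restricts to a chain map $U\colon C\{i\le 0,\,j\le k+1\}\to C\{i\le -1,\,j\le k\}\subseteq C\{i\le 0,\,j\le k\}$ of degree $-2$, and at the chain level $v_k\circ U=U\circ v_{k+1}$, both sides being multiplication by $U$ on $C\{i\le 0\}$. Passing to homology, $(v_k)_*\circ U_*=U\cdot(v_{k+1})_*$ as maps into $\mathcal{T}^-$, with $U$ on the right denoting multiplication by $U$ on $\mathcal{T}^-$. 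Therefore $U\cdot\operatorname{Im}(v_{k+1})_*=\operatorname{Im}\bigl((v_k)_*\circ U_*\bigr)\subseteq\operatorname{Im}(v_k)_*$, i.e. $U^{V_{k+1}+1}\mathcal{T}^-\subseteq U^{V_k}\mathcal{T}^-$, which gives $V_k\le V_{k+1}+1$. Combining Steps~1 and~2 yields $V_k(K)-1\le V_{k+1}(K)\le V_k(K)$.

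This argument is essentially formal, so I do not expect a genuine obstacle; the only point deserving care is the reformulation in the first paragraph — checking that $(v_k)_*$ is $\mathbb{F}[U]$-linear with nonzero image, so that $\operatorname{Im}(v_k)_*=U^{V_k(K)}\mathcal{T}^-$ with $V_k(K)$ a finite nonnegative integer. It is worth emphasizing that the plain monotonicity $V_{k+1}\le V_k$ already follows from the inclusions in Step~1; the substantive input is the observation in Step~2 that multiplication by $U$ relates the consecutive truncations $C\{i\le 0,\,j\le k+1\}$ and $C\{i\le 0,\,j\le k\}$ compatibly with the maps to $C\{i\le 0\}$, and it is this that pins the drop down to at most $1$.
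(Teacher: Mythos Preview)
The paper does not give its own proof of this proposition; it simply cites \cite[Proposition~7.6]{Rasmussen:2003-1}. Your argument is correct and is essentially the standard one (indeed, it is the argument Rasmussen gives): reformulate $V_k$ via $\operatorname{Im}(v_k)_*=U^{V_k}\mathcal{T}^-$, then use the factorization $v_k=v_{k+1}\circ\iota$ through the obvious inclusion to get monotonicity, and the $U$-shift $v_k\circ U=U\circ v_{k+1}$ to bound the drop by $1$. There is nothing to compare against in the present paper, and nothing to correct in your write-up.
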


\begin{proposition}[{\cite[Proposition 6.1]{Bodnar-Celoria-Golla:2015-1}}]\label{proposition:additivity}
For any knots $K,J\subset S^3$ and any integers $k$ and $j$, \[ V_{k+j}(K\#J) \leq V_k(K)+V_j(J).\]
\end{proposition}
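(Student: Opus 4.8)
The plan is to deduce this from the Künneth (connected-sum) formula $CFK^\infty(K\# J)\simeq CFK^\infty(K)\otimes_{\mathbb F[U,U^{-1}]}CFK^\infty(J)$, under which both the algebraic filtration level $i$ and the Alexander filtration level add: a dot at $(i_1,j_1)$ tensored with a dot at $(i_2,j_2)$ becomes a dot at $(i_1+i_2,\,j_1+j_2)$. I will use the standard description of $V_m$ underlying \eqref{equation:vkdefinition}: writing $A^-_m(K):=CFK^\infty(K)\{i\le 0,\ j\le m\}$ and $B^-(K):=CFK^\infty(K)\{i\le 0\}$ (and likewise for $J$ and $K\# J$), the inclusion-induced map $(v_m)_*\colon H_*(A^-_m(K))\to H_*(B^-(K))\cong\mathcal T^-$ is a map of $\mathbb F[U]$-modules whose image is $U^{V_m(K)}\mathcal T^-$; thus $V_m(K)$ is the smallest integer $t\ge 0$ for which $U^{t}\theta_K$ lies in that image, where $\theta_K$ denotes a generator of $\mathcal T^-\cong H_*(B^-(K))$.

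First I would set up the commuting square
\[
\begin{array}{ccc}
A^-_k(K)\otimes_{\mathbb F[U]}A^-_j(J) & \overset{\Phi}{\longrightarrow} & A^-_{k+j}(K\# J)\\
\downarrow & & \downarrow\\
B^-(K)\otimes_{\mathbb F[U]}B^-(J) & \overset{\Psi}{\longrightarrow} & B^-(K\# J),
\end{array}
\]
whose vertical maps are $v_k\otimes v_j$ and $v_{k+j}$ and whose horizontal maps are induced by the connected-sum identification — which restricts to these subcomplexes exactly because the two filtrations add, so that $x\otimes y\mapsto x\otimes y$ on products of generators; the square then commutes on the nose. The one piece of filtration bookkeeping is that $\Phi$ lands in the subcomplex $A^-_{k+j}(K\# J)$, which reduces to the trivial inequality $\min(A(x),k)+\min(A(y),j)\le k+j$. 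Moreover $\Psi$ is an \emph{isomorphism} of chain complexes: both $CFK^\infty(K\# J)\{i\le 0\}$ and $B^-(K)\otimes_{\mathbb F[U]}B^-(J)$ are free over $\mathbb F[U]$ on the products $x\otimes y$ of generators of $CFK^\infty(K)$ and $CFK^\infty(J)$ (all of which sit at $i=0$), $\Psi$ matches these bases, and it intertwines the Leibniz differentials; equivalently, the connected-sum map on $HF^-(S^3)$ is an isomorphism taking the top generator to the top generator.

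Next I would push a class through the square. Choose cycles $a\in A^-_k(K)$ and $b\in A^-_j(J)$ with $[v_k a]=U^{V_k(K)}\theta_K$ and $[v_j b]=U^{V_j(J)}\theta_J$ (possible since the image of $(v_m)_*$ is $U^{V_m}\mathcal T^-$). Then $a\otimes b$ is a cycle, and commutativity of the square gives
\[
(v_{k+j})_*\bigl[\,\Phi(a\otimes b)\,\bigr]=\bigl[\,\Psi\bigl((v_k a)\otimes(v_j b)\bigr)\,\bigr].
\]
Now $(v_k a)\otimes(v_j b)$ represents the cross product of $[v_k a]$ and $[v_j b]$, i.e.\ $U^{V_k(K)+V_j(J)}[\theta_K\otimes\theta_J]$; since $[\theta_K\otimes\theta_J]$ generates $H_*\bigl(B^-(K)\otimes_{\mathbb F[U]}B^-(J)\bigr)\cong\mathcal T^-$ (Künneth, the Tor term vanishing because $\mathcal T^-$ is $\mathbb F[U]$-free) and $\Psi_*$ is an isomorphism, the right-hand side equals $U^{V_k(K)+V_j(J)}\theta_{K\# J}$, a \emph{nonzero} element of $\mathcal T^-\cong H_*(B^-(K\# J))$. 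Therefore $U^{V_k(K)+V_j(J)}\theta_{K\# J}$ lies in the image of $(v_{k+j})_*$, which is $U^{V_{k+j}(K\# J)}\mathcal T^-$; hence $V_{k+j}(K\# J)\le V_k(K)+V_j(J)$.

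The step I expect to be the genuine obstacle is the single non-bookkeeping claim above: that passing to the subcomplex $\{i\le 0\}$ is compatible with the connected-sum tensor product and introduces no spurious power of $U$ — equivalently, that $\Psi_*$ is an isomorphism carrying $\theta_K\otimes\theta_J$ to a generator of $H_*(B^-(K\# J))$. Granting that, the rest is routine: the inequality making $\Phi$ well defined, naturality of the cross product (so the square commutes on homology), and the elementary observation that an $\mathbb F[U]$-submodule of $\mathcal T^-=\mathbb F[U]$ containing $U^{N}\theta$ has ``$U$-order'' at most $N$.
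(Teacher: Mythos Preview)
The paper does not supply its own proof of this proposition: it is quoted verbatim from \cite{Bodnar-Celoria-Golla:2015-1} with no argument, so there is nothing in the paper to compare against line by line.

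Your argument is correct and is essentially the standard one. The K\"unneth formula $CFK^-(K\#J)\simeq CFK^-(K)\otimes_{\mathbb F[U]}CFK^-(J)$ (Ozsv\'ath--Szab\'o) is exactly the statement that your map $\Psi$ is a filtered chain homotopy equivalence, and under it the distinguished generator of $HF^-(S^3)\cong\mathcal T^-$ corresponds to $\theta_K\otimes\theta_J$; so the point you flag as the ``genuine obstacle'' is already a known theorem, not something you need to prove from scratch. Once that is granted, the rest---additivity of the $(i,j)$--filtrations under tensor product (so $\Phi$ lands in $A^-_{k+j}$), commutativity of the square, and the observation that a nonzero element $U^N\theta$ in the image of $(v_{k+j})_*$ forces $V_{k+j}\le N$---is indeed routine. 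Two small cosmetic remarks: your ``$\min(A(x),k)+\min(A(y),j)\le k+j$'' is a slightly odd way to phrase the trivial filtration check $j_1+j_2\le k+j$, and you should be explicit that you are invoking the chain-level K\"unneth theorem for $CFK^-$ (a filtered chain homotopy equivalence suffices; an on-the-nose isomorphism is not needed for the homological conclusion).
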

Note that the correction terms of lens spaces which appear in Proposition \ref{proposition:Viformula} are described by an inductive formula \cite[Proposition 4.8]{Ozsvath-Szabo:2003-2}, so the $V_k$'s determine the correction terms of rational surgeries. 
The following is a consequence of Proposition \ref{proposition:Viformula} and the fact that $d$ is a Spin$^c$ rational homology cobordism invariant.

\begin{proposition}\label{proposition:Viconcordance}
 For each $k$, $V_k(K)$ is a smooth concordance invariant of $K$.
\end{proposition}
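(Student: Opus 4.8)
The plan is to derive the statement directly from Proposition~\ref{proposition:Viformula} together with the fact that $d$ is a spin$^c$ rational homology cobordism invariant, since the latter is the substance of the preceding sentence. First I would fix a smooth concordance between knots $K$ and $K'$ in $S^3$, and choose auxiliary coprime integers $p,q$ with $p,q>0$. The concordance, together with the standard trace/surgery construction, produces a smooth rational homology cobordism $W$ from $S^3_{p/q}(K)$ to $S^3_{p/q}(K')$: one takes $S^3\times[0,1]$, removes a tubular neighborhood of the annulus $C$ realizing the concordance, and reglues; because the concordance has the homology of a product, the resulting $4$-manifold is a rational homology $S^3\times[0,1]$. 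Moreover this cobordism carries a spin$^c$ structure restricting to $\mathfrak{t}_i$ on each end for each $i\in\Z_p$, compatibly with the identifications of $\mathrm{Spin}^c(S^3_{p/q}(K))$ and $\mathrm{Spin}^c(S^3_{p/q}(K'))$ with $\Z_p$ used in Remark~\ref{remark:spinclabel}. Hence $d(S^3_{p/q}(K),\mathfrak{t}_i)=d(S^3_{p/q}(K'),\mathfrak{t}_i)$ for all $i$.

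Next I would feed this equality into Proposition~\ref{proposition:Viformula}. For every $i\in\Z_p$ we get
\[
\max\{V_{\lfloor i/q\rfloor}(K),\,V_{\lfloor (p+q-1-i)/q\rfloor}(K)\}=\max\{V_{\lfloor i/q\rfloor}(K'),\,V_{\lfloor (p+q-1-i)/q\rfloor}(K')\},
\]
since the lens space term $d(S^3_{p/q}(U),\mathfrak{t}_i)$ is the same on both sides. It remains to extract from this family of equalities, as $i$, $p$, $q$ vary, the conclusion that $V_k(K)=V_k(K')$ for every fixed $k\ge 0$. The clean way is to specialize: take $q=1$ and $p$ large, so that $\lfloor i/q\rfloor=i$ ranges over $0,1,\dots,p-1$; then the index $\lfloor (p-i)/1\rfloor=p-i$ is large, and one should use that $V_N(K)=0$ for $N$ sufficiently large (which follows from Proposition~\ref{proposition:Videcreasing} together with the fact, recorded after the definition of the $V_k$, that the $V_k$ are nonnegative integers and eventually constant — indeed eventually $0$, since $C\{i\le0,j\le k\}\to C\{i\le0\}$ is an isomorphism for $k\gg0$). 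Choosing $p$ so large that $V_{p-i}(K)=V_{p-i}(K')=0$ for all $i\le k$, the max reduces on both sides to $V_i(K)$ versus $V_i(K')$, yielding $V_i(K)=V_i(K')$ for $0\le i\le k$; letting $k$ grow gives the result for all $k$.

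The main obstacle, and the only point requiring genuine care, is the naturality of the spin$^c$ labelling under the concordance cobordism: one must check that the identification $\varphi\colon \mathrm{Spin}^c(S^3_{p/q}(K))\to\Z_p$ of Remark~\ref{remark:spinclabel} is compatible with the one for $K'$ via $W$, i.e.\ that the spin$^c$ structure on $W$ restricting to the $\Z_p$-label $i$ on one end restricts to the same label $i$ on the other. This is because $\varphi$ is built from the meridian of the dual knot (the core of the surgery solid torus), and the concordance provides an annulus whose boundary consists of these meridians on the two ends, so the Poincar\'e dual classes $\hat\mu$ correspond under the cobordism; the formula $\mathfrak{t}_j=\mathfrak{t}_i+(j-i)q^*\hat\mu$ then transports correctly. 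Once this bookkeeping is in place the argument is otherwise formal. (Alternatively, one can avoid tracking the labels entirely by using only $q=1$, $p$ prime and noting the $d$-invariant is determined up to the action of the symmetry group of spin$^c$ structures, but the direct naturality statement is cleaner.)
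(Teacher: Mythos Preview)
Your proposal is correct and follows exactly the approach the paper indicates: the paper does not spell out a proof but simply records that the proposition is a consequence of Proposition~\ref{proposition:Viformula} together with the fact that $d$ is a Spin$^c$ rational homology cobordism invariant. Your argument is a faithful fleshing-out of that sentence, including the specialization to $q=1$ with $p$ large and the naturality of the Spin$^c$ labelling across the surgered concordance cobordism.
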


\subsection{$\nu^+$-equivalence}
Following \cite{Hom-Wu:2016-1} and \cite{Kim-Park:2016-1}, we recall the $\nu^+$-invariant and $\nu^+$-equivalence.
\begin{definition}[\cite{Hom-Wu:2016-1}] For a knot $K$, $\nu^+(K)$ is the smallest integer $k$ such that $V_k(K)=0$. \end{definition}
\begin{proposition}[{\cite[Proposition 2.3]{Hom-Wu:2016-1}}]\label{proposition:whennu+iszero} The invariant $\nu^+$ is a smooth concordance invariant. For any knot $K$, $\nu^+(K)\geq 0$ and the equality holds if and only if $V_0(K)=0$.
\end{proposition}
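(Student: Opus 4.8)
The plan is to deduce all three assertions straight from the definition of $\nu^+$, using only the monotonicity in Proposition~\ref{proposition:Videcreasing}, the finite generation of $CFK^\infty(K)$, and the concordance invariance of the $V_k$'s from Proposition~\ref{proposition:Viconcordance}; no new input about the knot Floer package is needed.

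First I would check that $\nu^+(K)$ is well defined and non-negative. Since $C=CFK^\infty(K)$ is generated over $\mathbb{F}[U,U^{-1}]$ by finitely many elements whose Alexander filtration levels are bounded above by some integer $g$, for every $k\ge g$ one has $C\{i\le 0, j\le k\}=C\{i\le 0\}$, so $v_k$ is the identity map, $(v_k)_*$ is the identity on $\mathcal{T}^-$, and therefore $V_k(K)=0$. Hence the set $\{k\ge 0 : V_k(K)=0\}$ is non-empty, so it has a least element and $\nu^+(K)$ is a well-defined integer; being the minimum of a set of non-negative integers, $\nu^+(K)\ge 0$. (Proposition~\ref{proposition:Videcreasing} moreover shows that $\{V_k(K)\}_{k\ge 0}$ is non-increasing, so it first reaches $0$ at $k=\nu^+(K)$ and remains $0$ thereafter.)

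Next I would prove the equivalence. If $V_0(K)=0$, then $0$ belongs to the set $\{k\ge 0 : V_k(K)=0\}$, and since $0$ is the least non-negative integer, $\nu^+(K)=0$. Conversely, if $\nu^+(K)=0$ then the defining minimum is attained at $k=0$, so $V_0(K)=V_{\nu^+(K)}(K)=0$. Finally, for concordance invariance, observe that $\nu^+(K)$ depends only on the sequence $\{V_k(K)\}_{k\ge 0}$, every term of which is a smooth concordance invariant by Proposition~\ref{proposition:Viconcordance}; hence if $K$ and $K'$ are smoothly concordant then $V_k(K)=V_k(K')$ for all $k\ge 0$, whence $\nu^+(K)=\nu^+(K')$. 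The argument is essentially formal, and the only step requiring any care is the well-definedness, where one must invoke finite generation of $CFK^\infty(K)$ to guarantee that $V_k(K)$ eventually vanishes.
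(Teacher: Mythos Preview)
The paper does not supply its own proof of this proposition; it simply quotes the result from \cite[Proposition~2.3]{Hom-Wu:2016-1}. Your argument is correct and is exactly the standard formal deduction from the definitions: eventual vanishing of $V_k$ (via finite generation of $CFK^\infty$) gives well-definedness and $\nu^+\ge 0$, the biconditional is tautological from the definition of $\nu^+$, and concordance invariance follows from Proposition~\ref{proposition:Viconcordance}.
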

\begin{definition}[{\cite{Kim-Park:2016-1}}] We say two knots $K_0$ and $K_1$ are \emph{$\nu^+$-equivalent} if 
\[\nu^+(K_0\#-K_1)=\nu^+(-K_0\# K_1)=0.\] 
\end{definition}
\begin{remark}\label{remark:nu+equivalence}By Proposition \ref{proposition:whennu+iszero}, two knots $K_0$ and $K_1$ are $\nu^+$-equivalent if and only if \[V_0(K_0\# -K_1)=V_0(-K_0\# K_1)=0.\]
\end{remark}

\begin{remark}\label{remark:Hom}Hom's argument in the proof of \cite[Proposition 3.11]{Hom:2017-1} shows that two knots are $\nu^+$-equivalent if and only if $CFK^\infty(K_0)\oplus A_0$ and $CFK^\infty(K_1)\oplus A_1$  are filtered chain homotopy equivalent for some acyclic $A_0$ and $A_1$. (For details, see \cite[Lemma 3.1]{Kim-Park:2016-1}.)
\end{remark}

Though the following propositions seem to be well-known to experts, we prove them for the reader's convenience.
\begin{proposition}\label{proposition:nu+impliesVi}If $K$ and $J$ are $\nu^+$-equivalent, then $V_i(K)=V_i(J)$ for any $i\geq 0$.
\end{proposition}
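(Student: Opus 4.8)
The plan is to reduce the statement to the subadditivity of the $V_k$-sequence under connected sum, which holds by Proposition~\ref{proposition:additivity}. First I would translate the hypothesis: by Remark~\ref{remark:nu+equivalence}, the assumption that $K$ and $J$ are $\nu^+$-equivalent is precisely the statement that $V_0(K\#-J)=0$ and $V_0(-K\#J)=0$; since connected sum is commutative, the latter is the same as $V_0(J\#-K)=0$. So from the outset I have at my disposal the vanishing of $V_0$ for both ``difference'' knots.

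Next, fix $i\ge 0$. Since $-J\#J$ is slice, the knot $(K\#-J)\#J$ is smoothly concordant to $K$, so Proposition~\ref{proposition:Viconcordance} gives $V_i(K)=V_i\big((K\#-J)\#J\big)$. Applying Proposition~\ref{proposition:additivity} with the splitting $i=0+i$ then yields
\[
V_i(K)=V_i\big((K\#-J)\#J\big)\le V_0(K\#-J)+V_i(J)=V_i(J).
\]
Interchanging the roles of $K$ and $J$ and using $V_0(J\#-K)=0$ in the same way gives $V_i(J)\le V_i(K)$. Combining the two inequalities produces $V_i(K)=V_i(J)$, which is the assertion.

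There is essentially no obstacle here; the only point requiring a moment of care is that $\nu^+$-equivalence is a symmetric condition controlling $V_0$ of \emph{both} difference knots, which is exactly what Remark~\ref{remark:nu+equivalence} records, so that the argument can be run in both directions. (One could alternatively first upgrade $V_0(K\#-J)=0$ to $V_k(K\#-J)=0$ for all $k\ge 0$ via Proposition~\ref{proposition:Videcreasing} and nonnegativity of the $V_k$'s, but this stronger vanishing is not needed for the present statement.)
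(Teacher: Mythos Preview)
Your proof is correct and follows essentially the same route as the paper: translate $\nu^+$-equivalence into $V_0(K\#-J)=V_0(-K\#J)=0$ via Remark~\ref{remark:nu+equivalence}, use sliceness of $-J\#J$ and concordance invariance of $V_i$ to write $V_i(K)=V_i((K\#-J)\#J)$, then apply subadditivity (Proposition~\ref{proposition:additivity}) to get $V_i(K)\le V_i(J)$, and finish by symmetry. The only cosmetic difference is that you spell out the role of commutativity of connected sum and add the (unnecessary but harmless) parenthetical remark about upgrading to $V_k(K\#-J)=0$ for all $k$.
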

\begin{proof}By Remark \ref{remark:nu+equivalence}, $V_0(K\# -J)=V_0(-K\#J)=0$. By Proposition \ref{proposition:additivity} and concordance invariance of $V_i$, we have 
\[V_i(K)=V_i(K\# -J\# J)\leq V_0(K\# -J)+V_i(J)=V_i(J).\]
(Note that the first equality uses the fact that $-J\# J$ is slice.) The same argument gives the opposite inequality $V_i(J)\leq V_i(K)$.
\end{proof}
\begin{proposition}\label{proposition:nu+andconnectedsum}Suppose that $K_i$ and $J_i$ are $\nu^+$-equivalent knots for $i=0,1$. Then  $K_0\# K_1$ and $J_0\# J_1$ are also $\nu^+$-equivalent. In particular, if $K$ and $J$ are $\nu^+$-equivalent, then $nK$ and $nJ$ are $\nu^+$-equivalent for any integer $n$.
\end{proposition}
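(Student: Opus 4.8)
The plan is to reduce the statement to the numerical criterion of Remark~\ref{remark:nu+equivalence} and then invoke the subadditivity of the $V_k$'s. Recall that, by Remark~\ref{remark:nu+equivalence}, two knots $A$ and $B$ are $\nu^+$-equivalent exactly when $V_0(A\# -B)=V_0(-A\# B)=0$. So it suffices to check that $V_0\big((K_0\# K_1)\# -(J_0\# J_1)\big)=0$ and $V_0\big(-(K_0\# K_1)\# (J_0\# J_1)\big)=0$.

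First I would use commutativity of connected sum, together with the identity $-(J_0\# J_1)=-J_0\# -J_1$, to rewrite $(K_0\# K_1)\# -(J_0\# J_1)$ as $(K_0\# -J_0)\# (K_1\# -J_1)$; since $V_0$ depends only on the underlying knot (indeed only on its concordance class, by Proposition~\ref{proposition:Viconcordance}), the two presentations have the same $V_0$. Then I would apply Proposition~\ref{proposition:additivity} with $k=j=0$ to obtain
\[
V_0\big((K_0\# -J_0)\# (K_1\# -J_1)\big)\ \le\ V_0(K_0\# -J_0)+V_0(K_1\# -J_1)\ =\ 0,
\]
the last equality holding by hypothesis and Remark~\ref{remark:nu+equivalence}. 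Since every $V_k$ is a nonnegative integer, this forces $V_0\big((K_0\# K_1)\# -(J_0\# J_1)\big)=0$. The identical argument, applied to $-(K_0\# K_1)\# (J_0\# J_1)=(-K_0\# J_0)\# (-K_1\# J_1)$, gives the other vanishing, and Remark~\ref{remark:nu+equivalence} then yields that $K_0\# K_1$ and $J_0\# J_1$ are $\nu^+$-equivalent.

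For the ``in particular'' clause, I would first treat $n\ge 0$ by induction on $n$: the cases $n=0$ (both knots are the unknot) and $n=1$ are immediate, and the inductive step is the assertion just proved, applied with $K_0=(n-1)K$, $K_1=K$, $J_0=(n-1)J$, $J_1=J$. For $n<0$, I would note that $\nu^+$-equivalence is preserved under $K\mapsto -K$ — the defining condition $\nu^+(K_0\# -K_1)=\nu^+(-K_0\# K_1)=0$ is manifestly symmetric in $K_0\leftrightarrow -K_0$ and $K_1\leftrightarrow -K_1$ — so $-K$ and $-J$ are $\nu^+$-equivalent, and hence $nK=|n|(-K)$ and $nJ=|n|(-J)$ are $\nu^+$-equivalent by the already-established case of positive coefficients. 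There is no real obstacle here: the only points needing care are the bookkeeping identifications of the iterated connected sums (legitimate because $V_0$ is a concordance invariant) and the use of $V_0\ge 0$ to promote the subadditivity inequality to the equality $V_0=0$.
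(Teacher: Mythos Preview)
Your proof is correct and follows essentially the same route as the paper: reduce to the $V_0$ criterion via Remark~\ref{remark:nu+equivalence}, regroup the connected sum, and apply the subadditivity Proposition~\ref{proposition:additivity} together with $V_0\ge 0$. You are a bit more explicit than the paper in invoking nonnegativity of $V_0$ and in spelling out the induction (and the $n<0$ case) for the ``in particular'' clause, but the argument is the same.
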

\begin{proof}By Remark \ref{remark:nu+equivalence}, $V_0(K_i\#-J_i)=V_0(-K_i\#J_i)=0$ for $i=0,1$. By Proposition \ref{proposition:additivity}, we have 
\[V_0(K_0\#K_1\#-J_0\#-J_1)\leq V_0(K_0\#-J_0)+V_0(K_1\#-J_1)=0.\]
A similar argument shows that $V_0(-K_0\#-K_1\#J_0\#J_1)=0$. By Remark \ref{remark:nu+equivalence}, the two knots $K_0\# K_1$ and $J_0\#J_1$ are also $\nu^+$-equivalent.
\end{proof}

Now we give some known example of $\nu^+$-equivalences.
\begin{example}\label{example:nu+equivalent} Let $T_{p,q}$ be the $(p,q)$-torus knot and $D$ be the (positive) Whitehead double of the trefoil knot. Then, for any integer $n\geq 1$, both $nD$ and $nT_{2,3}$ are $\nu^+$-equivalent to $T_{2,2n+1}$ by Proposition~6.1 and Theorem~B.1 of \cite{Hedden-Kim-Livingston:2016-1} and Remark \ref{remark:Hom}. (Compare \cite[Example~3.2]{Kim-Park:2016-1}.)
\end{example}

It is shown in \cite{Kim-Park:2016-1} that $\nu^+$-equivalence is preserved under any satellite operation with non-zero winding number. In particular, $\nu^+$-equivalence is preserved under cabling operations.
\begin{theorem}[{\cite[Theorem B]{Kim-Park:2016-1}}]\label{theorem:KP} Suppose that $P\subset S^1\times D^2$ is a pattern with non-zero winding number. If two knots $K_0$ and $K_1$ are $\nu^+$-equivalent, then $P(K_0)$ and $P(K_1)$ are $\nu^+$-equivalent.
\end{theorem}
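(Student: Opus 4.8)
The plan is to recast $\nu^+$-equivalence as an equivalence of knot Floer complexes up to acyclic summands, and then to show that this ``stable equivalence'' is respected by any satellite operation with nonzero winding number, using bordered Heegaard Floer homology as the computational tool.

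\emph{Reduction to a statement about $CFK^\infty$.} By Remark~\ref{remark:nu+equivalence} and Remark~\ref{remark:Hom}, two knots $K$ and $K'$ are $\nu^+$-equivalent precisely when there are filtered acyclic complexes $A,A'$ with a filtered chain homotopy equivalence $CFK^\infty(K)\oplus A\simeq CFK^\infty(K')\oplus A'$; call this relation \emph{stable equivalence} of the underlying complexes. Applying the characterization also to $P(K_0)$ and $P(K_1)$, it suffices to prove that the $P$-satellite operation, viewed as an operation on knot Floer complexes, descends to stable equivalence classes. Since any stable equivalence is a composition of adjoining an acyclic filtered summand, a chain homotopy equivalence, and deleting an acyclic filtered summand, this reduces further to a single assertion: adjoining an acyclic filtered summand to $CFK^\infty(K)$ changes $CFK^\infty(P(K))$ only by acyclic filtered summands.

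\emph{A satellite formula.} To realize $CFK^\infty(P(K))$ as a functor of $CFK^\infty(K)$, pass to bordered invariants. The framed exterior $S^3\setminus\nu(K)$ carries a type-$D$ structure $\widehat{CFD}$ which, by the Lipshitz--Ozsv\'ath--Thurston ``$CFK$ to $CFD$'' algorithm, is built explicitly from a reduced model of $CFK^-(K)$ (equivalently, from the immersed multicurve of $K$ in the punctured torus). The pattern $P$ determines a type-$A$ module $\widehat{CFA}$ of $(S^1\times D^2)\setminus\nu(P)$, and the pairing theorem identifies $\widehat{CFK}(P(K))$---and, via the refinements recording the Alexander and algebraic filtrations, $CFK^\infty(P(K))$ itself---with the box tensor product $\widehat{CFA}(P)\boxtimes\widehat{CFD}(K)$. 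An acyclic filtered summand of $CFK^-(K)$ is carried by the algorithm to a homotopically trivial type-$D$ summand of $\widehat{CFD}(K)$, and box-tensoring such a summand with $\widehat{CFA}(P)$ produces an acyclic summand of $\widehat{CFK}(P(K))$, and hence a filtered-acyclic summand of $CFK^\infty(P(K))$, which is exactly what the reduction asks for.

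\emph{The main obstacle.} The real work is in making the previous paragraph rigorous, and two points are delicate. First, the pairing theorem most readily computes the hat-flavored invariant, whereas stable equivalence concerns the full $\Z\oplus\Z$-filtered complex $CFK^\infty$ over $\mathbb{F}[U,U^{-1}]$; one must either upgrade the pairing to track both filtrations through the minus-flavored bordered invariants, or argue directly and stage by stage that the relevant acyclic summand persists. Second---and this is the essential use of the hypothesis---one must check that $-\boxtimes\widehat{CFA}(P)$ genuinely carries acyclic summands to acyclic summands and never amalgamates such a summand with the rest of the complex; this amounts to controlling how the companion's curve is traced out by the pattern, and it is here that $w\neq 0$ enters decisively (the analogous statement can fail for $w=0$). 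I expect this bookkeeping---tracking acyclic pieces through the $CFK$-to-$CFD$ algorithm and then through the box tensor product while retaining enough of the two filtrations---to be the crux. A possible simplification is to settle the cable patterns first, where the explicit cabling formulas for $CFK^\infty$ make stable-equivalence-invariance transparent, and then try to bootstrap to general patterns.
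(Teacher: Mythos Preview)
The paper does not prove this theorem---it is quoted from \cite{Kim-Park:2016-1} and used as a black box---so there is no in-paper argument to compare against. Your overall strategy (recast $\nu^+$-equivalence as stable equivalence of $CFK^\infty$ via Remark~\ref{remark:Hom}, then push the decomposition through the bordered satellite formula) is reasonable and is in the spirit of how such results are established.

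There is, however, a concrete misstep in your middle paragraph. You assert that an acyclic filtered summand $A$ of $CFK^-(K)$ is carried by the LOT algorithm to a \emph{homotopically trivial} type-$D$ summand. That is false. Take $A$ generated over $\mathbb{F}[U,U^{-1}]$ by $a,b$ with $\partial a=Ub$ (a single horizontal arrow); the resulting $D_A$ has two $\iota_0$-generators joined by a nontrivial $\iota_1$-chain, and pairing $D_A$ with $\widehat{CFA}$ of the identity pattern already returns a complex with two-dimensional hat-homology. The point is that ``acyclic'' for $CFK^\infty$ means vanishing of homology over $\mathbb{F}[U,U^{-1}]$; it forces nothing to vanish at the type-$D$ or $\widehat{CFK}$ level, so $D_A$ is genuinely nontrivial.

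What you actually need is the weaker but still substantive claim that $\widehat{CFA}(P)\boxtimes D_A$, promoted to a summand of $CFK^\infty(P(K))$, is again acyclic---equivalently, that the single $\mathbb{F}[U,U^{-1}]$ tower in $H_*\bigl(CFK^\infty(P(K))\bigr)$ lands in the summand coming from $C$ rather than from $A$. This is precisely where $w\neq 0$ must be invoked (and where the statement can fail for $w=0$), and you say so, but you do not supply the mechanism. Until that step is actually carried out, what you have is a plausible outline with the decisive lemma missing; your own final paragraph essentially concedes this.
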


\begin{remark}\label{remark:nucomplex} Note that $V_k$ and therefore $\nu^+$ are invariants of the filtered chain homotopy type of $CFK^\infty(K)$. They can likewise be defined for any {\it abstract infinity complex} $C$, without knowing whether it is realized by an actual knot (see \cite[Definition 6.1]{Hedden-Watson:2014-1}). We say two such complexes $C$ and $C'$ are $\nu^+$-equivalent if \[\nu^+(C\otimes (C')^*) = \nu^+(C'\otimes C^*)=0,\] where $^*$ denotes the dual complex. We will abuse notation slightly further and say that a complex $C$ and a knot $K$ are $\nu^+$-equivalent if $C$ and $CFK^\infty(K)$ are.
\end{remark}

\subsection{Reduced knot Floer complexes}\label{section:reduced}

The proof of Theorem \ref{theorem:C} will involve computing the $V_k$'s for a connect sum of knots. We discuss here how that can be done effectively when the summands are ($\nu^+$-equivalent to) $L$-space knots or their mirrors, using the reduced knot Floer complex of the second author. We refer the reader to \cite{Krcatovich:2015-1} for the definition in general, and here review the special case of an $L$-space knot.

By \cite[Theorem 1.2]{Ozsvath-Szabo:2005-1} and \cite[Remark~6.6]{Hom:2014-1}, when $K$ is an $L$-space knot, we have \[ CFK^\infty(K) \cong \mathbb{F}[U,U^{-1}]\otimes S_{(a_1,\ldots,a_n)},\] where $S_{(a_1,\ldots,a_n)}$ is a {\it staircase complex},  generated by $x_1,\ldots, x_{2n+1}$. Here $x_{2i+1}$ has filtration level \[(a_1+\cdots + a_i, a_1+\cdots + a_{n-i}),\]  and $x_{2i}$ has filtration level \[(a_1+\cdots + a_i , a_1+\cdots + a_{n-i+1}),\] where each $a_i$ is a positive integer. The differential is given by \[ \partial(x_{2i+1}) = 0, \ \ \partial(x_{2i}) = x_{2i-1}+x_{2i+1}.\] See Figure \ref{staircaseex} for an example. 
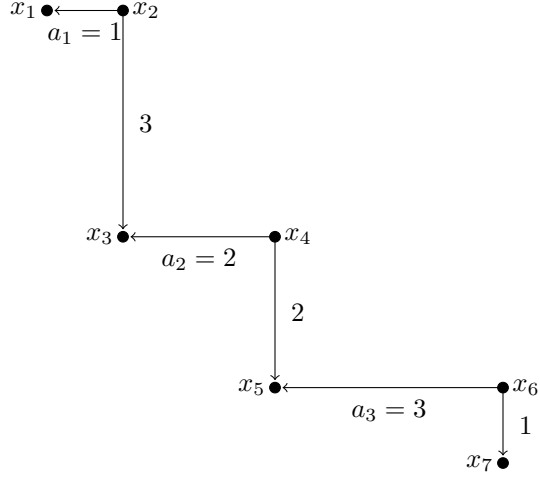
\begin{figure}
\begin{tikzpicture}
\filldraw[black] (0,6) circle (2pt) node[anchor=east]{$x_1$};
\filldraw[black] (1,6) circle (2pt) node[anchor=west]{$x_2$};
\filldraw[black] (1,3) circle (2pt) node[anchor=east]{$x_3$};
\filldraw[black] (3,3) circle (2pt) node[anchor=west]{$x_4$};
\filldraw[black] (3,1) circle (2pt) node[anchor=east]{$x_5$};
\filldraw[black] (6,1) circle (2pt) node[anchor=west]{$x_6$};
\filldraw[black] (6,0) circle (2pt) node[anchor=east]{$x_7$};
\draw[->] (1,6)--(0.1,6);
\draw[->] (1,6)--(1,3.1);
\draw[->] (3,3)--(1.1,3);
\draw[->] (3,3)--(3,1.1);
\draw[->] (6,1)--(3.1,1);
\draw[->] (6,1)--(6,0.1);
\node at (0.5,5.7) {$a_1=1$};
\node at (2,2.7) {$a_2=2$};
\node at (4.5,0.7) {$a_3=3$};
\node at (1.3,4.5) {$3$};
\node at (3.3, 2) {$2$};
\node at (6.3,0.5) {$1$};
\end{tikzpicture}
\caption{The staircase $S_{(1,2,3)}$, which generates $CFK^\infty(T_{4,5})$ over $\mathbb{F}[U,U^{-1}]$. The generator $x_1$ is in filtration level $(0,6)$. The $a_i$'s are the lengths (from left to right) of the horizontal arrows, and also (from bottom to top) of the vertical arrows.}\label{staircaseex}
\end{figure}
In the case that this complex belongs to a knot $K$,  $g(K)=\sum_i a_i$ is the Seifert genus of the knot. Note that all the generators with odd indices are pairwise homologous, and any of them generates the homology of the staircase complex. The list $(a_1,\ldots, a_n)$ is related to the Alexander polynomial of $K$ as follows. All nonzero terms in $\Delta_K(t)$ have coefficients $\pm 1$, with signs alternating, and the highest degree term positive. The number $a_i$ is the difference between the $(2i-1)$th and the $2i$th exponent. Because $\Delta_K(t)$ is symmetric, this list determines $\Delta_K(t)$, so we will say the list corresponds to $\Delta_K(t)$, or further that the list {\it corresponds to $K$}.

Given an $L$-space knot $K$, let $C = CFK^\infty(K)\{i\leq 0 \}.$  Then the reduced complex $\underline{CFK}^-(K)$ consists merely of the generators in $C$ which have no outgoing nor incoming horizontal arrows, and has trivial differential (see \cite[Corollary 4.2]{Krcatovich:2015-1}). As a complex, $\underline{CFK}^-(K)\cong \mathbb{F}[U]$, with 1 supported in grading zero. The Alexander filtration descends to a filtration on the reduced complex. The complex no longer has an algebraic filtration, but still has the structure of an $\mathbb{F}[U]$-module, with multiplication by $U$ taking the generator in grading $2i$ to the generator in grading $2i-2$. 

\begin{remark} \label{remark:reducedfiltration}The (Alexander) filtration on the reduced complex of an $L$-space knot can be determined explicitly from the list $(a_1,a_2,\ldots,a_n)$. If $x$ is the generator (over $\mathbb{F}[U]$) of $\underline{CFK}^-(K)$, then $x$ has filtration level $g(K)=a_1+\cdots +a_n$, and if \[ a_1+\cdots +a_i \leq j < a_1+\cdots + a_{i+1},\] then $U^jx$ has filtration level \[ a_1+\cdots + a_{n-i} -j .\] For $j\geq g(K), U^jx$ has filtration level $-j$. 
\end{remark}

\begin{figure}
$$\xymatrixcolsep{0.65 pc}\xymatrixrowsep{0.65 pc}
\xymatrix{ & & & & & & & & \underline{A} \\
& & & & & & & \tc{red}{\bullet} & 6 \\
& & & & & & \bullet & \bullet \ar[l] \ar[ddd] & 5 \\
& & & & & \bullet & \bullet \ar[l] \ar[ddd] & & 4\\
& & & & \bullet & \bullet \ar[l] \ar[ddd] & & & 3 \\
& & & \bullet & \bullet \ar[l] \ar[ddd] & & & \tc{red}{\bullet} & 2 \\
& & \bullet & \bullet \ar[l] \ar[ddd] & & & \tc{red}{\bullet}  &  & 1 \\
& \bullet & \bullet \ar[l]  \ar[ddd] & & & \bullet & & \bullet \ar[ll] \ar[dd] & 0\\
\bullet & \bullet \ar[l]  \ar[ddd] & & & \bullet & & \bullet \ar[ll] \ar[dd] & & -1\\
& & & \bullet & & \bullet \ar[ll] \ar[dd] & & \tc{red}{\bullet} & -2 \\
& & \bullet & & \bullet \ar[ll] \ar[dd] & & \tc{red}{\bullet} & & -3 \\
& \bullet & & \bullet \ar[ll] \ar[dd] & & \tc{red}{\bullet} & & & -4 \\
& & \dar[ddll] & & \bullet & & & \bullet \ar[lll] \ar[d] & -5 \\
& & &  \bullet & & & \bullet \ar[lll] \ar[d] & \tc{red}{\bullet} & -6 \\
& & & & & & \tc{red}{\bullet} & & -7
} \hspace{3cm} \xymatrix{ & \underline{A}\\
\bullet & 6 \\
& 5 \\
& 4 \\
& 3 \\
\bullet & 2 \\
\bullet & 1 \\
& 0 \\
& -1 \\
\bullet & -2 \\
\bullet & -3 \\
\bullet & -4 \\
& -5\\
\bullet & -6 \\
\bullet \dar[d] & -7 \\
&
}$$
\caption{At the left is $CFK^\infty(T_{4,5})\{i\leq 0 \}$, and at the right is the corresponding reduced complex, which consists of the elements colored red on the left; those with no incoming or outgoing horizontal arrows. Multiplication by $U$ in the reduced complex takes one dot downward to the next.}\label{lspacereduced}
\end{figure}
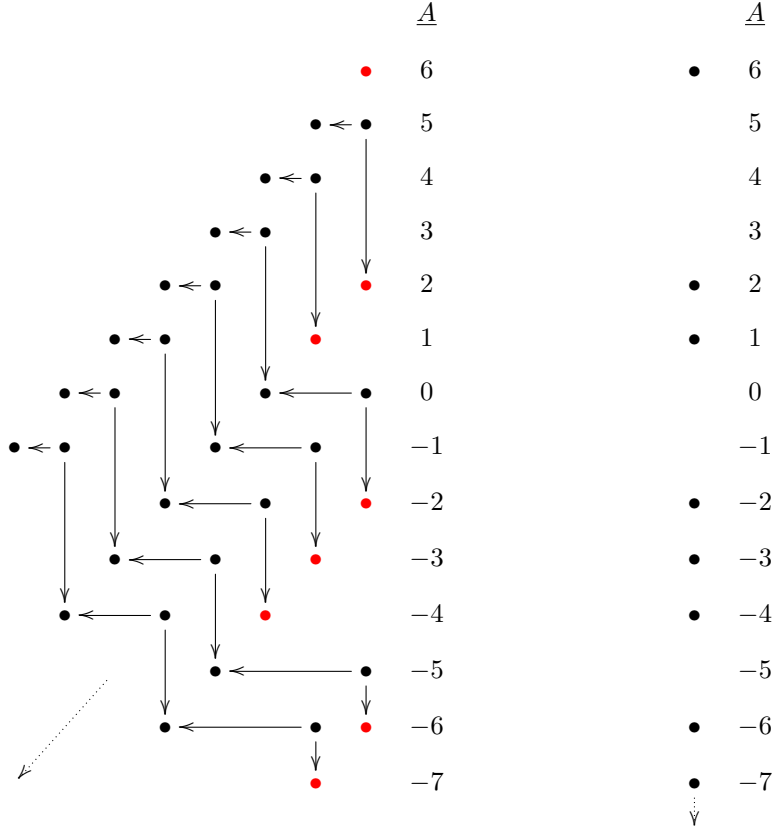

We can compute the $V_k$'s from the reduced complex $\underline{C}$ as 
\begin{equation}\label{equation:Vifromreduced}
 V_k(K) = -\tfrac{1}{2} \max \{ \text{grading of a non } U\text{-torsion generator of } H_*(\underline{C}\{j\leq k \})\} .
\end{equation}

For $K$ an $L$-space knot, $\underline{C}$ is isomorphic to its homology, but Equation \eqref{equation:Vifromreduced} holds for all knots, where this is not in general the case.

\begin{proposition}[{\cite[Theorem 3.4]{Krcatovich:2015-1}}]\label{proposition:reducedsum}
If $C$ is a complex with reduction $\underline{C}$, then there is a $(\mathbb{Z},U)$-filtered chain homotopy equivalence between $C\otimes CFK^-(K_2)$ and $\underline{C}\otimes CFK^-(K_2)$ for any knot $K_2$. In particular, if $C$ is $\nu^+$-equivalent to $CFK^\infty(K_1)$ for some knot $K_1$, then $V_k(K_1\# K_2)$ can be computed from $\underline{C}\otimes CFK^-(K_2)$ using \eqref{equation:Vifromreduced}.
\end{proposition}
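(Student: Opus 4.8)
The first assertion is \cite[Theorem~3.4]{Krcatovich:2015-1}, so in the paper it is enough to invoke it; here I sketch how it is proved and then deduce the ``in particular'' clause. Recall that $\underline C$ is produced from $C$ by a sequence of filtered changes of basis that successively cancel the \emph{horizontal} components of the differential, i.e.\ the summands of $\partial$ that preserve the Alexander grading. Over $\mathbb{F}[U,U^{-1}]$ each such cancellation is an instance of Gaussian elimination along an arrow whose coefficient is a power of $U$, hence a unit, and one checks directly that it induces an Alexander-filtered chain homotopy equivalence. The delicate point is that the back-substitution maps appearing in Gaussian elimination involve negative powers of $U$, so a priori this equivalence is not compatible with the $\mathbb{F}[U]$-module structure underlying $CFK^-$.

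The plan, following Krcatovich, is to carry out the reduction inside the tensor product $C\otimes CFK^-(K_2)$ rather than in $C$ itself: the additional factor $CFK^-(K_2)$ supplies generators in all of the $U$-degrees that the back-substitution requires, so the cancellations can be performed using only nonnegative powers of $U$, and one then verifies that the resulting chain homotopy equivalence respects the Alexander filtration and the $U$-action simultaneously. I expect this last step---arranging that the reduction of the horizontal differential of $C$ be compatible with both the Alexander filtration and the $U$-module structure on the tensor product---to be the main obstacle.

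Granting the first assertion, suppose $C$ is $\nu^+$-equivalent to $CFK^\infty(K_1)$. By the argument of Remark~\ref{remark:Hom}, applied to complexes as in Remark~\ref{remark:nucomplex}, there are acyclic complexes $A_0,A_1$ and a filtered chain homotopy equivalence $C\oplus A_0\simeq CFK^\infty(K_1)\oplus A_1$. Tensoring a fixed complex onto both sides preserves filtered chain homotopy equivalences, hence
\[
\bigl(C\otimes CFK^-(K_2)\bigr)\oplus\bigl(A_0\otimes CFK^-(K_2)\bigr)\ \simeq\ \bigl(CFK^\infty(K_1)\otimes CFK^-(K_2)\bigr)\oplus\bigl(A_1\otimes CFK^-(K_2)\bigr).
\]
Since $CFK^-(K_2)$ is free over $\mathbb{F}[U]$, each $A_i\otimes CFK^-(K_2)$ is again acyclic, and an acyclic summand carries no non-$U$-torsion homology, so it does not affect the quantities in \eqref{equation:vkdefinition} or \eqref{equation:Vifromreduced}. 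On the other hand $CFK^\infty(K_1)\otimes CFK^-(K_2)$ is a model for $CFK^\infty(K_1\# K_2)$, so it computes $V_k(K_1\# K_2)$ through \eqref{equation:vkdefinition}. Combining these observations with the first assertion, $\underline C\otimes CFK^-(K_2)$ also computes $V_k(K_1\# K_2)$, and by \eqref{equation:Vifromreduced} this computation is performed from the homology of the Alexander-truncations $(\underline C\otimes CFK^-(K_2))\{j\le k\}$.
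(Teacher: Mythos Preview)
The paper does not prove this proposition at all; it is stated with the bracketed citation to \cite[Theorem~3.4]{Krcatovich:2015-1} and both assertions are taken for granted. So there is nothing to compare your argument to on the paper's side, and your sketch of how the first assertion is proved in \cite{Krcatovich:2015-1} already goes beyond what the paper supplies.

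Your deduction of the ``in particular'' clause is in the right spirit but blurs the distinction between minus-type and infinity-type complexes. In the proposition, $C$ is a $(\mathbb{Z},U)$-filtered $\mathbb{F}[U]$-complex (that is what admits a reduction $\underline{C}$, and what makes the tensor product $C\otimes CFK^-(K_2)$ in the first assertion a minus-type object); the hypothesis ``$C$ is $\nu^+$-equivalent to $CFK^\infty(K_1)$'' is to be read via Remark~\ref{remark:nucomplex} as a statement about the localization $C\otimes_{\mathbb{F}[U]}\mathbb{F}[U,U^{-1}]$. You instead invoke Remark~\ref{remark:Hom} as if $C$ itself were an infinity complex, tensor that with $CFK^-(K_2)$, and assert that ``$CFK^\infty(K_1)\otimes CFK^-(K_2)$ is a model for $CFK^\infty(K_1\# K_2)$.'' Once the tensor products are taken over the correct rings this can be made to work, but as written the first assertion (an equivalence of $(\mathbb{Z},U)$-filtered $\mathbb{F}[U]$-complexes) does not plug directly into the infinity-level splitting you produce. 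A cleaner route is to avoid the acyclic-summand splitting and instead use Propositions~\ref{proposition:nu+andconnectedsum} and~\ref{proposition:nu+impliesVi}, extended to abstract complexes as in Remark~\ref{remark:nucomplex}: the infinity complex associated to $C\otimes_{\mathbb{F}[U]} CFK^-(K_2)$ is $\nu^+$-equivalent to $CFK^\infty(K_1\# K_2)$, hence has the same $V_k$, and then the first assertion transports the computation to $\underline{C}\otimes CFK^-(K_2)$ via~\eqref{equation:Vifromreduced}.
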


\subsection{Staircases and connect sums}

The staircases corresponding to two $L$-space knots can be combined to produce a ``representative staircase", which can then be used to compute the $V_k$'s of the connect sum of the two knots. This idea is introduced in \cite[Section~5.1]{Borodzik-Livingston:2014-1}, see also \cite[Example 2]{Krcatovich:2015-1}, \cite[Section~7]{Golla-Marengon:2016-1}. Here we prove the following statement, which is stronger where it applies, as it will in our case of interest.


\begin{lemma}\label{lemma:staircasesummand}
If $K$ and $J$ are $L$-space knots whose staircase shapes admit a compatible riffle, then \[CFK^\infty(K\# J) \cong \left(\mathbb{F}[U,U^{-1}]\otimes S^{\#}\right) \oplus A,\] where $S^{\#}$ is the staircase complex corresponding to the riffled shape, and $A$ is an acyclic subcomplex. In particular, $CFK^\infty(K\# J)$ is $\nu^+$-equivalent to a staircase complex.
\end{lemma}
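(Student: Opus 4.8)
The plan is to reduce the statement to a purely algebraic fact about the tensor product of the two staircase complexes, carry out an explicit filtered change of basis dictated by the riffle, and then deduce the last sentence from $\nu^{+}$-equivalence. First I would pass from connected sums to tensor products: by a K\"{u}nneth-type formula for knot Floer homology (see \cite{Ozsvath-Szabo:2004-1}), $CFK^{\infty}(K\#J)\cong CFK^{\infty}(K)\otimes_{\mathbb{F}[U,U^{-1}]}CFK^{\infty}(J)$ as $\mathbb{Z}\oplus\mathbb{Z}$-filtered complexes, and since $CFK^{\infty}(K)\cong\mathbb{F}[U,U^{-1}]\otimes_{\mathbb{F}}S_{K}$ and $CFK^{\infty}(J)\cong\mathbb{F}[U,U^{-1}]\otimes_{\mathbb{F}}S_{J}$ by \cite[Theorem~1.2]{Ozsvath-Szabo:2005-1} and \cite[Remark~6.6]{Hom:2014-1}, with $S_{K},S_{J}$ the associated staircase complexes, we obtain
\[CFK^{\infty}(K\#J)\;\cong\;\mathbb{F}[U,U^{-1}]\otimes_{\mathbb{F}}\bigl(S_{K}\otimes_{\mathbb{F}}S_{J}\bigr),\]
where the generator $x\otimes y$ sits at the filtration level obtained by adding those of $x$ and $y$. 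It therefore suffices to produce a $\mathbb{Z}\oplus\mathbb{Z}$-filtered direct sum splitting $S_{K}\otimes_{\mathbb{F}}S_{J}\cong S^{\#}\oplus B$ with $B$ acyclic; tensoring with $\mathbb{F}[U,U^{-1}]$ then yields the stated decomposition with $A=\mathbb{F}[U,U^{-1}]\otimes B$.

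Next I would analyze $S_{K}\otimes S_{J}$ by hand. Writing the staircase differentials as $\partial x_{2i}=x_{2i-1}+x_{2i+1}$, $\partial x_{2i+1}=0$ (and similarly for $J$), the Leibniz rule shows that each arrow of $S_{K}\otimes S_{J}$ is the image of a single staircase arrow, hence ``horizontal'' or ``vertical'' of length one of the step sizes $a_{i}$ or $b_{j}$; the complex is a two-dimensional array of such arrows. The combinatorial content of the riffle is an interleaving of the step data of $S_{K}$ and of $S_{J}$, and I would use it to write down explicitly a new $\mathbb{F}[U,U^{-1}]$-basis in which the differential is block diagonal, with one block equal to the staircase $S^{\#}$ associated to the riffled shape and the remaining blocks four-generator ``boxes'' (acyclic square complexes); equivalently, this exhibits the complementary subcomplex $B$ as an explicit direct sum of boxes, which are acyclic. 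The idea of extracting such a ``representative staircase'' from a connected sum of $L$-space knots goes back to Borodzik and Livingston \cite[Section~5.1]{Borodzik-Livingston:2014-1} (compare \cite[Example~2]{Krcatovich:2015-1}); the role of the \emph{compatibility} hypothesis is to guarantee that this block-diagonalization is possible, i.e.\ that the only indecomposable summands that occur are one staircase together with boxes, with nothing more exotic appearing. This is where I expect the real work to be: one must verify that the prescribed change of basis respects \emph{both} filtrations (exactly what ``compatible'' is arranged to ensure) and confirm that the non-box block is $S^{\#}$ with the correct filtration levels rather than a longer staircase accompanied by extra boxes. I would organize the verification as an induction on the total number of steps, peeling off one corner of the array --- one box, or the top of the staircase --- at each stage.

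Finally, granting the splitting, the last assertion is formal. The complex $\mathbb{F}[U,U^{-1}]\otimes S^{\#}$ is an abstract staircase complex in the sense of Remark \ref{remark:nucomplex}, and by Remark \ref{remark:Hom} adjoining or deleting the acyclic summand $A$ does not change the $\nu^{+}$-equivalence class; hence $CFK^{\infty}(K\#J)$ is $\nu^{+}$-equivalent to the staircase complex $S^{\#}$.
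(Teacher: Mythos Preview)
Your proposal is correct and follows essentially the same route as the paper: reduce via the K\"unneth formula to the tensor product of the two staircases, then perform a filtered change of basis---dictated step by step by the compatible riffle---which splits off the staircase $S^{\#}$ and leaves a direct sum of four-generator ``box'' complexes $A_{i,j}$, with compatibility being exactly what makes the basis change filtered; the final sentence then follows from Remark~\ref{remark:Hom}. The only difference is that the paper writes down the explicit change-of-basis formulas at each step of the riffle (rather than packaging the verification as an induction), but this is a matter of presentation, not of strategy.
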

The $S^{\#}$ above will be called the {\it representative staircase} for the sum. Before proving, we introduce the necessary terminology.

A {\it riffle} of two ordered lists $A$ and $B$ is an ordering of $A\cup B$ which restricts to the given orderings on each sublist $A$ and $B$. In a riffle of $A$ and $B$, the {\it opposite successor} of an element $a\in A$ (respectively $b\in B$) is the first element of $B$ (resp. $A$) which appears after $a$ (resp. $b$), if such an element exists. As an example, \[ ( 1,2,3,a,b,4,c,5,d,e ) \] is a riffle of $( 1,2,3,4,5)$ and $( a,b,c,d,e)$; the opposite successor of $1$ is $a$, the opposite successor of $a$ is $4$, and $d$ and $e$ have no opposite successor.

Given a staircase with list $(a_1,\ldots, a_n)$, we define the {\it staircase shape} as the ordered list of ordered pairs \[\left( (a_1,a_n),(a_2,a_{n-1}),\ldots, (a_n,a_1)\right). \]

Two staircase shapes are {\it compatible} if thye can be riffled so that if $(b_j,b_{m-j+1})$ is the opposite successor of $(a_i,a_{n-i+1})$, then $a_i\leq b_j$ and $a_{n-i+1}\geq b_{m-j+1}$ (and likewise, if $(a_i,a_{n-i+1})$ is the opposite successor of $(b_j,b_{m-j+1})$, then  $a_i\geq b_j$ and $a_{n-i+1}\leq b_{m-j+1}$). We will also call such a riffle compatible.

\begin{proof}[Proof of Lemma \ref{lemma:staircasesummand}]

Let us assume that $CFK^\infty(K)\cong \mathbb{F}[U,U^{-1}]\otimes S_{(a_1,\ldots,a_n)}$, with generators $x_1$ through $x_{2n+1}$, and that $CFK^\infty(J)\cong \mathbb{F}[U,U^{-1}]\otimes S_{(b_1,\ldots,b_m)}$, with generators $y_1$ through $y_{2m+1}$. Thus $CFK^\infty(K\# J) \cong CFK^\infty(K) \otimes CFK^\infty (J)$ has basis $x_iy_j$ (for readability, we write these elements without the `$\otimes$' symbol).

We will choose a new basis for $CFK^\infty(K\# J)$ which exhibits the direct sum splitting. The compatibility condition will be precisely what we need to ensure that our change of basis is filtered. Since the staircases are compatible, we can find a riffle of the staircase shapes 
which is compatible; fix one such riffle. The first generator of the representative staircase will be $x_1y_1$. Suppose the $(2k-1)$th generator on the staircase is $x_{2i+1}y_{2k-2i-1}$, for some $i$. Then, the $k$th element in the compatible riffle is either 
\begin{enumerate}[label=(\roman*)]
\item $(a_{i+1},a_{n-i})$ or
\item $(b_{k-i},b_{m-(k-i)+1})$.
\end{enumerate}

In case (i), the next step on the staircase consists of the generators $x_{2i+2}y_{2k-2i-1}$ and $x_{2i+3}y_{2k-2i-1}$. Further, we make the change of basis 
\begin{align*}
x_{2i+1}y_{2k-2i} \mapsto & \ x_{2i+2}y_{2k-2i-1}+x_{2i+1}y_{2k-2i},\\
x_{2i+1}y_{2k-2i+1} \mapsto &\  x_{2i+3}y_{2k-2i-1} + x_{2i+1}y_{2k-2i+1}.
\end{align*}

In case (ii), the next step in the staircase consists of $x_{2i+1}y_{2k-2i}$ and $x_{2i+1}y_{2k-2i+1}$. We make the filtered change of basis
\begin{align*}
x_{2i+2}y_{2k-2i-1} \mapsto & \ x_{2i+2}y_{2k-2i-1}+x_{2i+1}y_{2k-2i},\\
x_{2i+3}y_{2k-2i-1} \mapsto &\  x_{2i+3}y_{2k-2i-1} + x_{2i+1}y_{2k-2i+1}.
\end{align*}
This process terminates when the final step of the staircase includes the generator $x_{2n+1}y_{2m+1}$. 

We now have a collection of acyclic subcomplexes \[ A_{i,j}:= \{ x_{2i}y_{2j}, x_{2i-1}y_{2j} + x_{2i}y_{2j-1}, x_{2i+1}y_{2j} + x_{2i}y_{2j+1}, x_{2i-1}y_{2j+1} + x_{2i+1}y_{2j-1} \} , \] for $1\leq i \leq n$, $1\leq j \leq m$.

\begin{figure}
\begin{tikzpicture}
\filldraw[black] (0,1) circle (2pt) node[anchor=south]{$x_1$};
\filldraw[black] (1,1) circle (2pt) node[anchor=south]{$x_2$};
\filldraw[black] (1,0) circle (2pt) node[anchor=west]{$x_3$};
\draw[->] (1,1)--(0.1,1);
\draw[->] (1,1)--(1,0.1);
\filldraw[white] (2,1) circle (1pt) node[black]{$\otimes$};
\filldraw[white] (7,1) circle (1pt) node[black]{$\cong$};
\filldraw[black] (3,3) circle (2pt) node[anchor=south]{$y_1$};
\filldraw[black] (4,3) circle (2pt) node[anchor=south]{$y_2$};
\filldraw[black] (4,1) circle (2pt) node[anchor=east]{$y_3$};
\filldraw[black] (6,1) circle (2pt) node[anchor=south]{$y_4$};
\filldraw[black] (6,0) circle (2pt) node[anchor=west]{$y_5$};
\draw[->] (4,3)--(3.1,3);
\draw[->] (4,3)--(4,1.1);
\draw[->] (6,1)--(4.1,1);
\draw[->] (6,1)--(6,0.1);
\filldraw[red] (8,4) circle (2pt) node[anchor=east]{$x_1y_1$};
\filldraw[red] (9,4) circle (2pt) node[anchor=south east]{$x_1y_2$};
\filldraw[black] (10,4) circle (2pt) node[anchor=west]{$x_2y_2$};
\filldraw[black] (9.2,4) circle (2pt) node[anchor=north west]{$x_2y_1$};
\filldraw[black] (9,3) circle (2pt) node[anchor=north west]{$x_3y_1$};
\filldraw[black] (10,3) circle (2pt) node[anchor=west]{$x_3y_2$};
\filldraw[red] (9,2) circle (2pt) node[anchor=east]{$x_1y_3$};
\filldraw[red] (10,2) circle (2pt) node[anchor=north east]{$x_2y_3$};
\filldraw[black] (11,2) circle (2pt) node[anchor=south]{$x_1y_4$};
\filldraw[black] (12,2) circle (2pt) node[anchor=west]{$x_2y_4$};
\filldraw[red] (10,1) circle (2pt) node[anchor=east]{$x_3y_3$};
\filldraw[black] (11,1) circle (2pt) node[anchor=north]{$x_1y_5$};
\filldraw[black] (12,1.2) circle (2pt) node[anchor=south east]{$x_2y_5$};
\filldraw[red] (12,1) circle (2pt) node[anchor=north west]{$x_3y_4$};
\filldraw[red] (12,0) circle (2pt) node[anchor=west]{$x_3y_5$};
\draw[red, ->] (9,4)--(8.1,4);
\draw[red, ->] (9,4) to [out=260,in=100] (9,2.1);
\draw[red, ->] (10,2)--(9.1,2);
\draw[red,->] (10,2)--(10,1.1);
\draw[red, ->] (12,1)--(12,0.1);
\draw[red, ->] (12,1) to [out=190,in=350] (10.1,1);
\draw[->] (9.2,4)--(9,3.1);
\draw[->] (9.2,4) to [out=210, in=330] (8.1,4);
\draw[->] (10,3)--(9.1,3);
\draw[->] (10,3) to [out=290,in=70] (10,1.1);
\draw[->] (11,2)--(11,1.1);
\draw[->] (11,2) to [out=160, in=20] (9.1,2);
\draw[->] (12,1.2)--(11.1,1);
\draw[->] (12,1.2) to [out=240,in=120] (12,0.1);
\draw[->] (10,4)--(9.3,4);
\draw[->] (10,4)--(10,3.1);
\draw[->] (10,4) to [out=130,in=50] (9,4);
\draw[->] (10,4) to [out=330,in=30] (10,2);
\draw[->] (12,2)--(12,1.3);
\draw[->] (12,2)--(11.1,2);
\draw[->] (12,2) to [out=130,in=50] (10,2);
\draw[->] (12,2) to [out=320,in=40] (12,1);
\end{tikzpicture}\\
\caption{$T_{2,3}$ has staircase shape $((1,1))$, and $T_{3,4}$ has staircase shape $((1,2),(2,1))$. Generating sets for $CFK^\infty$ of each are shown on the left, and a generating set for the tensor product complex $CFK^\infty(T_{2,3})\otimes CFK^\infty(T_{3,4})$ is shown on the right. Note that the staircase shapes are compatible, with compatible riffle $((1,2),(1,1),(2,1))$, and this is the staircase shape of the subcomplex colored red.}\label{figure:repstaircase1}
\begin{tikzpicture}
\filldraw[black] (0, 8) circle (2pt) node[anchor=south]{$x_1y_1$};
\filldraw[black] (2, 8) circle (2pt) node[anchor=south east]{$x_1y_2$};
\filldraw[black] (2.3, 8) circle (2pt) node[anchor= north west]{$x_2y_1$};
\filldraw[black] (4, 8) circle (2pt) node[anchor=west]{$x_2y_2$};
\filldraw[black] (2.3, 6) circle (2pt);
\filldraw[white] (2, 6) circle (2pt) node[black, anchor=north west]{$x_3y_1+x_1y_3$};
\filldraw[white] (2.3,7.5) circle (2pt) node[black, anchor= north west]{$+x_1y_2$};
\filldraw[black] (4, 6) circle (2pt) node[anchor=south west]{$x_2y_3+x_3y_2$};
\filldraw[black] (2, 4) circle (2pt) node[anchor=east]{$x_1y_3$};
\filldraw[black] (4, 4) circle (2pt) node[anchor=west]{$x_2y_3$};
\filldraw[black] (4, 2) circle (2pt) node[anchor=east]{$x_3y_3$};
\filldraw[black] (6, 2.5) circle (2pt) node[anchor=north]{$x_1y_5+x_3y_3$};
\filldraw[black] (6, 4) circle (2pt) node[anchor=south]{$x_1y_4+x_2y_3$};
\filldraw[black] (8, 2) circle (2pt) node[anchor=west]{$x_3y_4$};
\filldraw[black] (8, 2.5) circle (2pt) node[anchor=west]{$x_3y_4+x_2y_5$};
\filldraw[black] (8, 0) circle (2pt) node[anchor=west]{$x_3y_5$};
\filldraw[black] (8, 4) circle (2pt) node[anchor=west]{$x_2y_4$};
\draw[->] (2,8)--(0.1,8);
\draw[->] (2,8)--(2,4.1);
\draw[->] (4,4)--(2.1,4);
\draw[->] (4,4)--(4,2.1);
\draw[->] (8,2)--(4.1,2);
\draw[->] (8,2)--(8,0.1);
\draw[->] (8,4)--(8,2.6);
\draw[->] (8,4)--(6.1,4);
\draw[->] (8,2.5)--(6.1,2.5);
\draw[->] (6,4)--(6,2.6);
\draw[->] (4,8)--(2.4,8);
\draw[->] (4,8)--(4,6.1);
\draw[->] (2.3,8)--(2.3,6.1);
\draw[->] (4,6)--(2.4,6);
\end{tikzpicture}
\caption{After a filtered change of basis, $CFK^\infty(T_{2,3}\# T_{3,4})$ is seen to be generated by a direct sum of the staircase $S^{\#}$ (the subcomplex which was colored red in Figure~\ref{figure:repstaircase1}) and acyclic complexes.} \label{figure:repstaircase2}
\end{figure}

We have also constructed a subcomplex which is a staircase $S^{\#}$ whose corresponding list is a riffle of $(a_1,\ldots, a_n)$ and $(b_1, \ldots, b_m)$. Now we have \[CFK^\infty(K\# J) \cong \mathbb{F}[U,U^{-1}] \otimes \left( S^{\#} \oplus \bigoplus_{i,j} A_{i,j} \right),\]
which has the desired form.  Figures \ref{figure:repstaircase1} and \ref{figure:repstaircase2} provide an illustration of this construction. 

Note that such a change of basis can be performed on any tensor product of staircase complexes, but it may not always be {\it filtered} -- the compatibility assumption ensures that this is indeed a filtered change of basis, and the result can still be used to compute $V_k$'s. The final statement then follows from \cite[Proposition 3.11]{Hom:2017-1}.
\end{proof}


\subsection{Proof of Theorem \ref{theorem:C}}
Now we prove Theorem \ref{theorem:C}.

\begin{lemma}\label{lemma:Jn}Let $D$ be the Whitehead double of the right handed trefoil knot. For any integer $n\geq 1$, let $J_n=(nD)_{2,4n-1} \# -T_{2,4n-1} \#2(n-1)D$. Then, the knot $J_n$ satisfies the following properties.
\begin{enumerate}[label={\upshape(\arabic*)}]
\item\label{item:Kntopslice} $J_n$ is topologically slice. 
\item\label{item:Knnu+} $2J_n$ is $\nu^+$-equivalent to $2T_{2,2n+1;2,4n-1}\# -T_{2,5}$.
\item\label{item:ViKn} $V_i(2J_n)=V_i(2T_{2,2n+1;2,4n-1}\# -T_{2,5})$ for any $i$.
\end{enumerate}
\end{lemma}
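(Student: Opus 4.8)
The plan is to prove the three items essentially separately: item \ref{item:Kntopslice} from Freedman's theorem, item \ref{item:Knnu+} by replacing every summand of $2J_n$ by a $\nu^+$-equivalent torus knot and cancelling, and item \ref{item:ViKn} as a formal corollary of item \ref{item:Knnu+}. For item \ref{item:Kntopslice}, the argument I have in mind is the following. The Whitehead double $D$ has trivial Alexander polynomial, hence is topologically slice by Freedman's theorem, and therefore so are the connected sums $nD$ and $2(n-1)D$. In particular $nD$ is topologically concordant to the unknot, so, since satellite operations preserve topological concordance, $(nD)_{2,4n-1}$ is topologically concordant to the $(2,4n-1)$-cable of the unknot, namely $T_{2,4n-1}$. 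Hence $(nD)_{2,4n-1}\# -T_{2,4n-1}$ is topologically slice, and connect-summing with the topologically slice knot $2(n-1)D$ shows that $J_n$ is topologically slice.

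For item \ref{item:Knnu+}, I would expand $2J_n=2(nD)_{2,4n-1}\# -2T_{2,4n-1}\# 4(n-1)D$ and handle the pieces in turn. By Example \ref{example:nu+equivalent}, $nD$ is $\nu^+$-equivalent to $T_{2,2n+1}$; by Theorem \ref{theorem:KP}, applied to the $(2,4n-1)$-cabling pattern (which has winding number $2\neq 0$), $(nD)_{2,4n-1}$ is $\nu^+$-equivalent to $T_{2,2n+1;2,4n-1}$; and by Proposition \ref{proposition:nu+andconnectedsum}, $2(nD)_{2,4n-1}$ is $\nu^+$-equivalent to $2T_{2,2n+1;2,4n-1}$. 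It then remains to show that $-2T_{2,4n-1}\# 4(n-1)D$ is $\nu^+$-equivalent to $-T_{2,5}$. By Example \ref{example:nu+equivalent} and Proposition \ref{proposition:nu+andconnectedsum}, $4(n-1)D$ is $\nu^+$-equivalent to $4(n-1)T_{2,3}$; and since $4n-1=2(2n-1)+1$, the same example gives that $T_{2,4n-1}$ is $\nu^+$-equivalent to $(2n-1)T_{2,3}$, so that $2T_{2,4n-1}$ is $\nu^+$-equivalent to $(4n-2)T_{2,3}$. Therefore $-2T_{2,4n-1}\# 4(n-1)D$ is $\nu^+$-equivalent to $-(4n-2)T_{2,3}\# (4n-4)T_{2,3}$; since $-K\# K$ is slice for every knot $K$, this connected sum is concordant to $-2T_{2,3}$, which by Example \ref{example:nu+equivalent} is $\nu^+$-equivalent to $-T_{2,5}$. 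One last application of Proposition \ref{proposition:nu+andconnectedsum} assembles the two computations into the assertion that $2J_n$ is $\nu^+$-equivalent to $2T_{2,2n+1;2,4n-1}\# -T_{2,5}$.

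Item \ref{item:ViKn} then follows at once, because $V_i$ is a $\nu^+$-equivalence invariant by Proposition \ref{proposition:nu+impliesVi}. The step I expect to carry the real weight is the cancellation in item \ref{item:Knnu+}: the knot $J_n$ must be built so that the two ``correction'' summands $-2T_{2,4n-1}$ and $4(n-1)D$ collapse, up to $\nu^+$-equivalence, onto the single mirrored $L$-space knot $-T_{2,5}$, which is precisely what keeps the later computation of $V_0(2J_n)$ and $V_1(2J_n)$ tractable via combining staircases (Lemma \ref{lemma:staircasesummand}) and reducing (Proposition \ref{proposition:reducedsum}). Everything else is bookkeeping resting on the $\nu^+$-equivalences of $mD$ and $mT_{2,3}$ with $T_{2,2m+1}$ from Example \ref{example:nu+equivalent} and on the stability of $\nu^+$-equivalence under cabling and connected sum.
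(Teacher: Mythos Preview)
Your proposal is correct and follows essentially the same approach as the paper. The only cosmetic difference is in item \ref{item:Knnu+}: the paper first simplifies $J_n$ itself to $T_{2,2n+1;2,4n-1}\# -T_{2,3}$ (by combining $-T_{2,4n-1}\sim -(2n-1)T_{2,3}$ with $2(n-1)D\sim (2n-2)T_{2,3}$) and then doubles, whereas you double first and cancel afterwards; the ingredients (Example~\ref{example:nu+equivalent}, Theorem~\ref{theorem:KP}, Proposition~\ref{proposition:nu+andconnectedsum}) and the outcome are identical.
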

\begin{proof}\ref{item:Kntopslice} Since $nD$ has trivial Alexander polynomial, it is topologically slice by Freedman \cite{Freedman:1982-1}. For any topologically slice knot $J$, $J_{p,q}\# -T_{p,q}$ is topologically slice. In particular, $(nD)_{2,4n-1}\#-T_{2,4n-1}$ is topologically slice for any $n$. As a connected sum of two topologically slice knots, $J_n$ is topologically slice for any $n$.

\ref{item:Knnu+} By Example \ref{example:nu+equivalent}, for any $n\geq 1$, $nD$, $-T_{2,4n-1}$ and $2(n-1)D$ are $\nu^+$-equivalent to $T_{2,2n+1}$, $-(2n-1)T_{2,3}$ and $2(n-1)T_{2,3}$, respectively. Since $nD$ is $\nu^+$-equivalent to $T_{2,2n+1}$, $(nD)_{2,4n-1}$ is $\nu^+$-equivalent to $T_{2,2n+1;2,4n-1}$ by Theorem \ref{theorem:KP}. By Proposition \ref{proposition:nu+andconnectedsum}, $J_n$ is $\nu^+$-equivalent to $T_{2,2n+1;2,4n-1}\# -T_{2,3}$, and hence $2J_n$ is $\nu^+$-equivalent to $2T_{2,2n+1;2,4n-1}\# -T_{2,5}$ by Proposition \ref{proposition:nu+andconnectedsum} and Example \ref{example:nu+equivalent}.

\ref{item:ViKn} This follows from the item \ref{item:Knnu+} by Proposition \ref{proposition:nu+impliesVi}.
\end{proof}

By Lemma \ref{lemma:Jn}, we know $J_n$ is topologically slice for every $n$. To prove Theorem \ref{theorem:C}, it suffices to prove Theorem \ref{theorem:JnV0V1}.
\begin{theorem}\label{theorem:JnV0V1} For any $n\geq 1$, $V_0(2J_n)=2n$ and $V_1(2J_n)=2n-1$.
\end{theorem}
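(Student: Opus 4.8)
\textbf{Proof plan for Theorem \ref{theorem:JnV0V1}.}

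The plan is to reduce the computation of $V_0(2J_n)$ and $V_1(2J_n)$ to a concrete computation in a reduced knot Floer complex, using the machinery assembled above. By Lemma \ref{lemma:Jn}\ref{item:ViKn}, it suffices to compute $V_0$ and $V_1$ of the knot $K:=2T_{2,2n+1;2,4n-1}\# -T_{2,5}$. Each of the two copies of the cable $T_{2,2n+1;2,4n-1}$ is an $L$-space knot (an iterated torus knot of this positive type is an $L$-space knot), with staircase determined by its Alexander polynomial; I would first write down its $(a_1,\ldots,a_k)$ list explicitly from the cabling formula $\Delta_{T_{2,2n+1;2,4n-1}}(t)=\Delta_{T_{2,2n+1}}(t^2)\cdot\Delta_{T_{2,4n-1}}(t)$. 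Then I would apply Lemma \ref{lemma:staircasesummand}: check that the two identical staircase shapes admit a compatible riffle (two copies of the same staircase trivially satisfy the compatibility inequalities $a_i\le a_i$, $a_{n-i+1}\ge a_{n-i+1}$, taking the riffle that alternates), obtaining that $2T_{2,2n+1;2,4n-1}$ is $\nu^+$-equivalent to an explicit ``doubled'' staircase $S^{\#}$, and record the reduced complex $\underline{C}=\underline{CFK}^-$ of that staircase via Remark \ref{remark:reducedfiltration}.

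Next I would bring in the mirror summand $-T_{2,5}$. Here I would invoke Proposition \ref{proposition:reducedsum}: since $S^{\#}$ is $\nu^+$-equivalent to $CFK^\infty$ of an honest knot (or, via Remark \ref{remark:nucomplex}, work at the level of abstract complexes), $V_k(K)$ can be computed from the tensor product $\underline{C}\otimes CFK^-(-T_{2,5})$ using Equation \eqref{equation:Vifromreduced}. The complex $CFK^-(-T_{2,5})$ is small and well understood (the mirror of the $(2,5)$-torus knot staircase), so this tensor product is a manageable $\mathbb{F}[U]$-filtered complex; I would then read off $V_0(K)$ and $V_1(K)$ by locating the top grading of a non-$U$-torsion generator of the homology of the subcomplexes $\{j\le 0\}$ and $\{j\le 1\}$ respectively. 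The expected answers $V_0(2J_n)=2n$, $V_1(2J_n)=2n-1$ should emerge from this; as a sanity check, $V_0\le V_0(2T_{2,2n+1;2,4n-1})+V_0(-T_{2,5})$ and the lower bound is the genuinely delicate direction.

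The main obstacle I anticipate is the lower bound $V_0(2J_n)\ge 2n$ (and correspondingly $V_1\ge 2n-1$). Proposition \ref{proposition:additivity} only gives upper bounds for $V_k$ of a connected sum, and crude bounds would typically only yield something like $V_0\ge 2n - (\text{something})$ because the negative summand $-T_{2,5}$ can a priori cancel a lot. So the heart of the proof is to show that tensoring the big positive staircase $S^{\#}$ with $CFK^-(-T_{2,5})$ does \emph{not} decrease the relevant gradings by more than the expected amount; concretely, one must exhibit, in $H_*(\underline{C}\otimes CFK^-(-T_{2,5})\{j\le 0\})$, a non-$U$-torsion class in grading $-4n$ and show nothing survives in higher grading. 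I would handle this by explicitly describing the filtration levels of the generators of $\underline{C}$ from Remark \ref{remark:reducedfiltration} (the list for the doubled cable staircase has a predictable block structure), pairing them against the three generators of the $-T_{2,5}$ complex, and doing the bookkeeping of which tensor-product generators lie in $\{j\le 0\}$ and which are killed by the differential; the arithmetic is routine once the staircase list is in hand, but getting that list right — and verifying the riffle is compatible — is where care is needed. An alternative, possibly cleaner, route for the lower bound would be to compute $d$-invariants of a surgery $S^3_{p/q}(2J_n)$ via Proposition \ref{proposition:Viformula} and compare with a direct surgery computation, but I expect the reduced-complex bookkeeping to be the most self-contained path.
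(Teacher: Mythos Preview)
Your plan is essentially identical to the paper's proof: reduce via Lemma~\ref{lemma:Jn}\ref{item:ViKn} to $2T_{2,2n+1;2,4n-1}\#-T_{2,5}$, compute the staircase of the cable from $\Delta_{T_{2,2n+1}}(t^2)\Delta_{T_{2,4n-1}}(t)$ (the paper gets the list $(\overbrace{1,\ldots,1}^{n},2,\overbrace{3,\ldots,3}^{n-1})$), riffle two copies via Lemma~\ref{lemma:staircasesummand} to a single staircase, reduce, tensor with $CFK^-(-T_{2,5})$, and read off $V_0,V_1$ from \eqref{equation:Vifromreduced}. One small caution: your claim that two identical staircase shapes ``trivially'' admit a compatible riffle because $a_i\le a_i$ only checks half of the compatibility conditions---for the alternating riffle you also need $a_i\le a_{i+1}$ at each step, which holds here because the list $(1,\ldots,1,2,3,\ldots,3)$ is nondecreasing, but is not automatic in general; the paper explicitly writes down the riffled shape and its resulting list $(\overbrace{1,\ldots,1}^{2n},2,2,\overbrace{3,\ldots,3}^{2n-2})$.
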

\begin{proof}It suffices to prove that $V_0(2K_n\# -T_{2,5})=2n$ and $V_1(2K_n\# -T_{2,5})=2n-1$ by Lemma~\ref{lemma:Jn}\ref{item:ViKn}, 
where $K_n = T_{2,2n+1;2,4n-1}$. By \cite[Theorem~1.10]{Hedden:2009-1}, $K_n$ is an $L$-space knot. We have that 
\begin{align*}
\Delta_{K_n}(t)=& \Delta_{T_{2,2n+1}}(t^2)\cdot \Delta_{T_{2,4n-1}}(t)\\
 =& \left( \sum_{i=0}^{2n} (-1)^i(t^2)^{i} \right) \left( \sum_{j=0}^{4n-2} (-1)^jt^j \right) \\
=& \sum_{i=0}^{n-1} \left( t^{4i}-t^{4i+2}\right)\left(\sum_{j=0}^{4n-2} (-1)^jt^j \right) + t^{4n}\left(\sum_{j=0}^{4n-2} (-1)^jt^j \right) \\
=& \sum_{i=0}^{n-1} \left(t^{4i}-t^{4i+1} + t^{4n+4i-1}-t^{4n+4i}\right) + \sum_{j=0}^{4n-2} (-1)^jt^{j+4n}.
\end{align*}
Note for convenience that, because Alexander polynomials are symmetric, this polynomial is  determined by its coefficients up to degree \( \frac{1}{2}(8n-2) = 4n-1\). Therefore, if we let \[ p_n(t)  = \sum_{i=0}^{n-1} t^{4i}-t^{4i+1},\] then we see that \[ \Delta_{K_n}(t) = p_n(t) + t^{4n-1}+t^{8n-2}p_n(t^{-1}).\]

For example, when $n=3$, we have \[p_3(t) = 1-t+t^4-t^5+t^8-t^9,\] so \[ \Delta_{K_3}(t) = 1-t+t^4-t^5+t^8-t^9 +t^{11} -t^{13}+t^{14}-t^{17}+t^{18}-t^{21}+t^{22}. \]

The polynomial $\Delta_{K_n}(t)$ defines a staircase complex with corresponding list \[ ( \overbrace{1,\ldots, 1}^{n},2,\underbrace{3,\ldots,3}_{n-1} ).\]
Thus, the staircase shape is \[ \left(   \overbrace{ (1,3),\ldots, (1,3)}^{n-1}, (1,2), (2,1),\underbrace{(3,1),\ldots,(3,1)}_{n-1} \right).\]
It is easily seen that a compatible riffle of the shapes for two copies of $K_n$ is given by \[ \left(    \overbrace{ (1,3),\ldots, (1,3)}^{2n-2}, (1,2),(1,2),(2,1), (2,1),\underbrace{(3,1),\ldots,(3,1)}_{2n-2} \right).\]
So, by Lemma \ref{lemma:staircasesummand}, the representative staircase for $K_n \# K_n$ has list \[ ( \overbrace{1,\ldots, 1}^{2n},2,2,\underbrace{3,\ldots,3}_{2n-2} ).\]
Up to $\nu^+$-equivalence then, we can replace $CFK^\infty(2K_n)$ with the staircase complex corresponding to this list, which we will denote by $C_n$.

The torus knot $T_{2,5}$ is also an $L$-space knot, which corresponds to the list $(1,1)$. For brevity, we let $C^-$ denote the complex for the mirror image, $CFK^-(-T_{2,5})$. A basis for $C^-$ is given in Figure~\ref{figure:cfkt25}. 
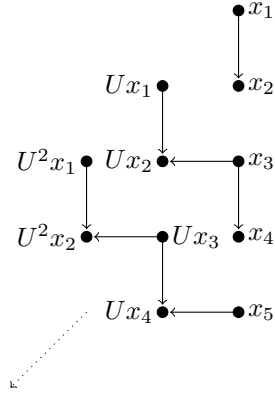
\begin{figure}
\begin{tikzpicture}
\filldraw[black] (0,2) circle (2pt) node[anchor=east]{$U^2x_1$};
\filldraw[black] (0,1) circle (2pt) node[anchor=east]{$U^2x_2$};
\filldraw[black] (1,1) circle (2pt) node[anchor=west]{$Ux_3$};
\filldraw[black] (1,0) circle (2pt) node[anchor=east]{$Ux_4$};
\filldraw[black] (1,3) circle (2pt) node[anchor=east]{$Ux_1$};
\filldraw[black] (1,2) circle (2pt) node[anchor=east]{$Ux_2$};
\filldraw[black] (2,2) circle (2pt) node[anchor=west]{$x_3$};
\filldraw[black] (2,1) circle (2pt) node[anchor=west]{$x_4$};
\filldraw[black] (2,4) circle (2pt) node[anchor=west]{$x_1$};
\filldraw[black] (2,3) circle (2pt) node[anchor=west]{$x_2$};
\filldraw[black] (2,0) circle (2pt) node[anchor=west]{$x_5$};
\draw[->] (0,2)--(0,1.1);
\draw[->] (1,1)--(0.1,1);
\draw[->] (1,1)--(1,0.1);
\draw[->] (2,0)--(1.1,0);
\draw[->] (1,3)--(1,2.1);
\draw[->] (2,2)--(1.1,2);
\draw[->] (2,2)--(2,1.1);
\draw[->] (2,4)--(2,3.1);
\draw[dotted,->] (0,0)--(-1,-1);
\end{tikzpicture}
\caption{$CFK^-(-T_{2,5})$.}\label{figure:cfkt25}
\end{figure}

\begin{figure}
\begin{tikzpicture}[scale=0.3]
\filldraw[black] (-7,14) circle (4pt);
\filldraw[black] (-6,14) circle (4pt);
\filldraw[black] (-6,11) circle (4pt);
\filldraw[black] (-5,11) circle (4pt);
\filldraw[black] (-5,8) circle (4pt);
\filldraw[black] (-4,8) circle (4pt);
\filldraw[black] (-4,6) circle (4pt);
\filldraw[black] (-3,6) circle (4pt);
\filldraw[black] (-3,4) circle (4pt);
\filldraw[black] (-1,4) circle (4pt);
\filldraw[black] (-1,3) circle (4pt);
\filldraw[black] (1,3) circle (4pt);
\filldraw[black] (1,2) circle (4pt);
\filldraw[black] (4,2) circle (4pt);
\filldraw[black] (4,1) circle (4pt);
\filldraw[black] (7,1) circle (4pt);
\filldraw[black] (7,0) circle (4pt);
\draw (-7,14)--(-6,14)--(-6,11)--(-5,11)--(-5,8)--(-4,8)--(-4,6)--(-3,6)--(-3,4)--(-1,4)--(-1,3)--(1,3)--(1,2)--(4,2)--(4,1)--(7,1)--(7,0);
\filldraw[red] (14,14) circle (4pt) node[anchor=west,scale=0.7] {$(0)$};
\filldraw[red] (14,10) circle (4pt);
\filldraw[red] (14,6) circle (4pt)node[anchor=west,scale=0.7]{$(-4)$};
\filldraw[red] (14,3) circle (4pt) node[anchor=west,scale=0.7]{$(-6)$};
\filldraw[red] (14,0) circle (4pt) node[anchor=west,scale=0.7] {$(-8)$};
\filldraw[red] (14,-1) circle (4pt) node[anchor=west,scale=0.7] {$(-10)$};
\filldraw[red] (14,-3) circle (4pt) node[anchor=west,scale=0.7] {$(-12)$};
\filldraw[red] (14,-4) circle (4pt);
\filldraw[red] (14,-6) circle (4pt);
\filldraw[red] (14,-7) circle (4pt);
\filldraw[red] (14,-8) circle (4pt);
\filldraw[red] (14,-10) circle (4pt);
\filldraw[red] (14,-11) circle (4pt);
\filldraw[red] (14,-12) circle (4pt);
\filldraw[red] (14,-14) circle (4pt);
\filldraw[red] (14,-15) circle (4pt);
\draw[red,dotted,->] (14,-16)--(14,-18);
\filldraw[black] (25,2) circle (4pt) node[anchor=west,scale=0.7]{$(4)$};
\filldraw[black] (25,1) circle (4pt);
\filldraw[black] (25,0) circle (4pt);
\filldraw[black] (25,-1) circle (4pt);
\filldraw[black] (25,-2) circle (4pt) node[anchor=west,scale=0.7]{$(0)$};
\filldraw[black] (24,-2) circle (4pt);
\filldraw[black] (24,-1) circle (4pt);
\filldraw[black] (23,-1) circle (4pt);
\filldraw[black] (23,0) circle (4pt);
\filldraw[black] (24,1) circle (4pt);
\filldraw[black] (24,0) circle (4pt);
\filldraw[black] (24,-3) circle (4pt);
\filldraw[black] (23,-3) circle (4pt);
\filldraw[black] (23,-2) circle (4pt);
\filldraw[black] (22,-2) circle (4pt);
\filldraw[black] (22,-1) circle (4pt);
\draw (24,-3)--(23,-3)--(23,-2)--(22,-2)--(22,-1);
\draw (25,-2)--(24,-2)--(24,-1)--(23,-1)--(23,0);
\draw (25,-1)--(25,0)--(24,0)--(24,1);
\draw (25,1)--(25,2);
\draw[dotted,->] (22,-3)--(21,-4);
\filldraw[magenta] (34,-2) circle (4pt) node[anchor=west,scale=0.7]{$(-8)$};
\filldraw[black] (33,-2) circle (4pt);
\filldraw[magenta] (33,-1) circle (4pt);
\filldraw[black] (32,-2) circle (4pt);
\filldraw[magenta] (32,-1) circle (4pt);
\draw (34,-2)--(33,-2)--(33,-1)--(32,-2)--(32,-1);
\filldraw[cyan] (35,1) circle (4pt) node[anchor=west,scale=0.7]{$(-6)$};
\filldraw[black] (34,-1) circle (4pt);
\filldraw[cyan] (34,0) circle (4pt);
\filldraw[cyan] (33,1) circle (4pt);
\filldraw[black] (33,0) circle (4pt);
\draw (35,1)--(34,-1)--(34,0)--(33,0)--(33,1);
\draw[dashed,magenta] (36,0.5)--(30,0.5);
\draw[dashed,cyan] (36,1.5)--(30,1.5);
\filldraw[black] (33,-3) circle (4pt) node[anchor=west,scale=0.7]{$(-10)$};
\filldraw[black] (32,-3) circle (4pt);
\filldraw[black] (32,-4) circle (4pt);
\filldraw[black] (31,-2) circle (4pt);
\filldraw[black] (31,-3) circle (4pt);
\draw (33,-3)--(32,-4)--(32,-3)--(31,-3)--(31,-2);
\filldraw[black] (36,4) circle (4pt) node[anchor=west,scale=0.7]{$(-4)$};
\filldraw[black] (35,2) circle (4pt);
\filldraw[black] (35,3) circle (4pt);
\filldraw[black] (34,2) circle (4pt);
\filldraw[black] (34,1) circle (4pt);
\draw (36,4)--(35,2)--(35,3)--(34,1)--(34,2);
\filldraw[black] (36,16) circle (4pt) node[anchor=west,scale=0.7]{$(4)$};
\draw[dotted] (35,4)--(36,14);
\draw[dotted,->] (31,-4)--(30,-6);
\filldraw[white] (0,0) circle node[black,anchor=north]{$C_2$};
\draw[->] (9,0)--(13,0);
\node[anchor=south] at (11,0) {reduce};
\node at (20,0) {$\otimes$};
\node at (28,0) {$\cong$};
\node at (24,-6) {$C^-$};
\node at (34,-6) {$\underline{C_2}\otimes C^-$};
\node at (12,-10) {$\underline{C_2}$};
\node at (39,0) {$0$};
\node at (39,3) {$3$};
\node at (39,6) {$6$};
\node at (39,9) {$9$};
\node at (39,12) {$12$};
\node at (39,15) {$15$};
\node at (39,-3) {$-3$};
\node at (39,-6) {$-6$};
\node at (39,-9) {$-9$};
\node at (39,-12) {$-12$};
\node at (39,-15) {$-15$};
\node at (39,16) {$16$};
\node at (39,17.5) {$\underline{A}$};
\end{tikzpicture}
\caption{To the left is a generating set for the complex $C_2$. Reducing as in Figure \ref{lspacereduced} gives the red complex $\underline{C_2}$. Tensoring this with $C^-$ yields the result on the right (some of the complex which is not relevant to the computations at hand has been dotted out). The homological gradings of selected generators are given in parentheses. The element colored magenta -- the highest grading generator of homology below the magenta line -- demonstrates that $V_0=-\frac{1}{2}(-8)=4$, while the element colored blue demonstrates that $V_1=-\frac{1}{2}(-6)=3$.}\label{figure:vicomp}
\end{figure}
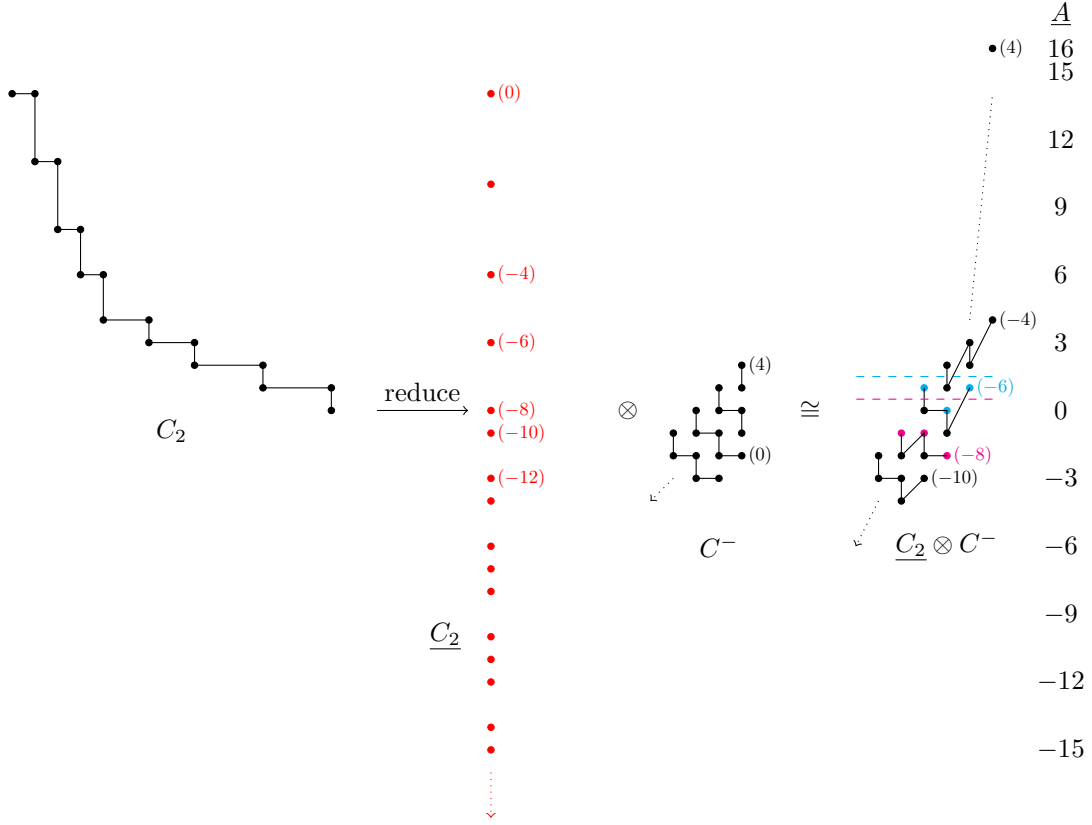

Now we wish to compute $V_0$ and $V_1$ for the complex $C_n\otimes C^-$, which is easily done using the methods in the proof of \cite[Theorem 4.7]{Krcatovich:2015-1}. To that end, let $\underline{C_n}$ be the reduced complex obtained from $C_n$ as described in Section \ref{section:reduced}. As a graded complex, \[ \underline{C_n} \cong \mathbb{F}[U]_{(0)}, \] but the filtration depends on the staircase; see Remark \ref{remark:reducedfiltration}. In the case at hand, the filtration level $A$ is given by 
\begin{equation}\label{eq:reducedfilt}
A(U^i a) = 
\begin{cases}
8n-2 - 4i & 0\leq i \leq 2n-2 \\
3 & i = 2n-1 \\
0 & i = 2n \\
-1 & i= 2n+1 \\ 
-3 & i=2n+2 \\
-4 & i=2n+3 \\
2n-i-2-\left\lfloor \frac{i-2n-4}{3} \right\rfloor & 2n+4 \leq i \leq 8n-2\\
-i & i \geq 8n-2,
\end{cases}
\end{equation}
where $a$ is the generator. Recall that $U$ drops grading by 2, so that $M(U^ia)=-2i$.

Now we consider the complex $\underline{C_n} \otimes C^-.$ This complex is generated as an $\mathbb{F}[U]$-module by $ax_j$, for $1\leq j \leq5$. It is still filtered, but multiplication by $U$ is not homogeneous with respect to the filtration, precisely because it is not in $\underline{C_n}$. If we let $A$ be the filtration on $\underline{C_n}$ as given in \eqref{eq:reducedfilt}, $A^-$ be the filtration on $C^-$, and $A^{\otimes}$ be the filtration on the product, then 
\begin{equation}\label{eq:tensorfilt}
A^{\otimes}(U^ia x_j) = A(U^ia) + A^-(x_j).
\end{equation} 
Note that, in general, to get a filtration on a tensor product, we would set \[ A^{\otimes}(U^iax_j) = \min_{0\leq k\leq i} \{ A(U^ka)+A^-(U^{i-k}x_j) \}.\] Because $U$ lowers $A^-$ by 1 and $U$ lowers $A$ by {\it at least} 1, this is given by \eqref{eq:tensorfilt}.

\begin{remark}\label{remark:tensorgradings}
The homogeneous elements in $\underline{C_n} \otimes C^-$ with homological grading $-2i$ are precisely $U^{i+2}a x_1, U^{i+1}a x_3$, and $U^{i}a x_5$. Further, $H_{-2i}(\underline{C_n} \otimes C^-)$  is generated by the sum of these elements. It follows from this and \eqref{eq:tensorfilt} that the filtration level of the generator of $H_{-2i}$ is given by \[ \max \{ 2+A(U^{i+2}a), A(U^{i+1}a), -2+A(U^{i}a) \}.\]
\end{remark}

Now we wish to determine the $V_k$'s, recalling that $\underline{C_n}\otimes C^-$ is $\nu^+$-equivalent to $2J_n$. Note that \[  \max \{ 2+A(U^{2n+2}a), A(U^{2n+1}a), -2+A(U^{2n}a)     \} = -1,\] while \[  \max \{ 2+A(U^{2n+1}a), A(U^{2n}a), -2+A(U^{2n-1}a)     \} = 1.\] That is, the filtration level of the generator of $H_{-4n}$ is -1, and the filtration level of the generator of $H_{-4n+2}$ is 1. Together with \eqref{equation:Vifromreduced}, these give that $V_0(2J_n) = -\frac{1}{2}(-4n) = 2n$, and $V_1(2J_n) \leq 2n-1$. By Proposition \ref{proposition:Videcreasing}, $V_1$ and $V_0$ can differ by at most 1, so it follows that $V_1(K)=2n-1$. 
\end{proof}



\section{Proof of Theorem \ref{theorem:A}}\label{section:maintheorem}
The goal of this section is to prove Theorem \ref{theorem:A}. Let $n$ be a positive integer and $L_n$ be the link in Figure \ref{figure:mainexample}. In the introduction section, we observed that $L_n$ has unknotted components and $L_n$ is topologically concordant to the Hopf link for any integer $n$. It remains to prove that $L_n$ is not smoothly concordant to  any $2$-component link $J$ with trivial Alexander polynomial, and $L_n$ and $L_m$ are not smoothly concordant when $n\neq m$. Therefore, the following theorems imply Theorem \ref{theorem:A}.
\begin{figure}[htb]
\centering
\includegraphics[width=2in]{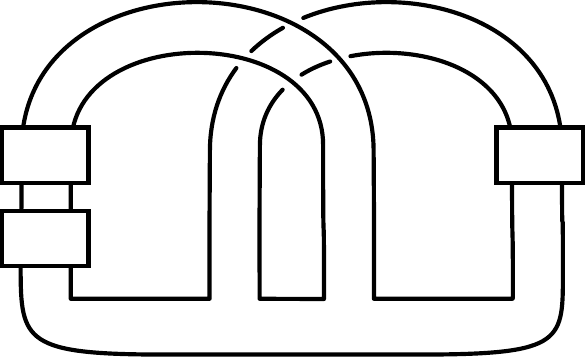}
\put(-135,46.5){$1$}
\put(-139,26){$J_n$}
\put(-17.5,47.3){$-2$}
\caption{A disk-band form of a genus $1$ Seifert surface of $K_{L_n}$.}
\label{figure:bandform}
\end{figure}

\begin{figure}[htb]
\centering
\includegraphics[width=3.5in]{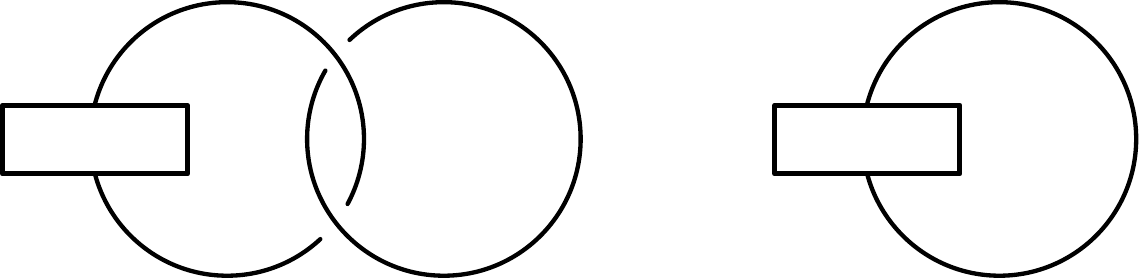}
\put(-247,28){$J_n\#J_n^r$}
\put(-75.2,28){$J_n\#J_n^r$}
\put(-235,55){$2$}
\put(-131,55){$-4$}
\put(-6,55){$\frac{9}{4}$}
\put(-110,28){$\longrightarrow$}
\caption{The $2$-fold branched cover of $K_{L_n}$. The second figure is obtained by the slam-dunk move from the first figure.}
\label{figure:cover}
\end{figure}
\begin{theorem}\label{theorem:Arepeat}For any positive integer $n$, the link $L_n$ is not smoothly concordant to any $2$-component link $J$ with trivial Alexander polynomial.
\end{theorem}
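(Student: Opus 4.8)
The plan is to combine Theorem~\ref{theorem:mainobstruction} with an explicit $\overline d$-invariant computation. Assume for contradiction that $L_n$ is smoothly concordant to a $2$-component link $J$ with $\Delta_J(s,t)=1$. Since linking number is a concordance invariant and $L_n$ has linking number $1$, so does $J$, and Theorem~\ref{theorem:mainobstruction} provides a metabolizer $M$ for the linking form on $\Sigma:=\Sigma_{K_{L_n}}$ with $\overline d(\Sigma,\mathfrak s_m)=0$ for all $m\in M$. First I would identify $\Sigma$: starting from the genus-one Seifert surface of $K_{L_n}$ in Figure~\ref{figure:bandform}, the Montesinos trick realizes $\Sigma$ as surgery on a two-component link, and the handle moves in Figure~\ref{figure:cover} reduce this to $\Sigma\cong S^3_{9/4}(J_n\#J_n^r)$. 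Because the filtered chain homotopy type of $CFK^\infty$ is insensitive to string-orientation reversal, $V_k(J_n\#J_n^r)=V_k(2J_n)$ for all $k$, so by Theorem~\ref{theorem:JnV0V1} we have $V_0(2J_n)=2n$ and $V_1(2J_n)=2n-1$. Now $H_1(\Sigma)\cong\Z/9$, so any metabolizer of the nonsingular linking form has order $3$; since $\Z/9$ has a unique subgroup of order $3$, namely $M=\{0,3\mu,6\mu\}$ where $\mu$ is the meridian of $J_n\#J_n^r$, and $M$ is isotropic ($\lambda_\Sigma(3\mu,3\mu)=9\,\lambda_\Sigma(\mu,\mu)=\lambda_\Sigma(9\mu,\mu)=0$), $M$ is the only metabolizer. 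Hence it suffices to show $\overline d(\Sigma,\mathfrak s_{3\mu})\neq 0$ or $\overline d(\Sigma,\mathfrak s_{6\mu})\neq 0$.

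To evaluate these I would first pin down the relevant $\operatorname{Spin}^c$ structures. By Remark~\ref{remark:spinclabel} with $p=9$, $q=4$, the spin structure $\mathfrak s_0$ of $\Sigma$ equals $\mathfrak t_6$, and tracking the class $\hat\mu\in H^2(\Sigma)$ through the identification $\operatorname{Spin}^c(\Sigma)\cong\Z_9$ gives $\mathfrak s_{3\mu}=\mathfrak t_0$ and $\mathfrak s_{6\mu}=\mathfrak t_3$. Proposition~\ref{proposition:Viformula}, together with the monotonicity of the $V_k$'s (Proposition~\ref{proposition:Videcreasing}, which forces $\max\{V_0,V_3\}=\max\{V_0,V_2\}=V_0$ for $2J_n$), yields
\[d(\Sigma,\mathfrak t_0)=d(S^3_{9/4}(U),\mathfrak t_0)-2V_0(2J_n),\qquad d(\Sigma,\mathfrak t_3)=d(S^3_{9/4}(U),\mathfrak t_3)-2V_0(2J_n),\]
\[d(\Sigma,\mathfrak t_6)=d(S^3_{9/4}(U),\mathfrak t_6)-2V_1(2J_n).\]
Subtracting and using $V_0(2J_n)-V_1(2J_n)=1$, the $n$-dependence cancels and
\[\overline d(\Sigma,\mathfrak s_{3\mu})=d(S^3_{9/4}(U),\mathfrak t_0)-d(S^3_{9/4}(U),\mathfrak t_6)-2,\qquad \overline d(\Sigma,\mathfrak s_{6\mu})=d(S^3_{9/4}(U),\mathfrak t_3)-d(S^3_{9/4}(U),\mathfrak t_6)-2.\]

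Finally, $S^3_{9/4}(U)$ is a lens space, whose correction terms are computed by the Ozsv\'ath--Szab\'o recursion \cite[Proposition~4.8]{Ozsvath-Szabo:2003-2}; evaluating it shows that at least one of the two quantities above is nonzero (and, notably, independent of $n$), contradicting the conclusion of Theorem~\ref{theorem:mainobstruction} and finishing the proof. I expect the substance here to be bookkeeping rather than conceptual, with two delicate points: (i) justifying the diffeomorphism $\Sigma_{K_{L_n}}\cong S^3_{9/4}(J_n\#J_n^r)$ from the disk-band picture, and (ii) matching the $H_1$-labels $\mathfrak s_{k\mu}$ with the surgery labels $\mathfrak t_i$. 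Point (ii) is the most error-prone: the whole argument hinges on the two non-spin metabolizer elements landing on $\mathfrak t_0,\mathfrak t_3$ (which ``see'' $V_0$) while the spin structure lands on $\mathfrak t_6$ (which sees $V_1$), producing the decisive shift by $2$; a sign or normalization slip there would collapse the argument.
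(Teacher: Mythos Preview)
Your proposal is correct and follows essentially the same route as the paper: identify $\Sigma_{K_{L_n}}\cong S^3_{9/4}(J_n\#J_n^r)$ from the genus-one Seifert surface, observe the unique metabolizer is $\langle 3\mu\rangle$, match $\mathfrak s_0=\mathfrak t_6$ and $\mathfrak s_{3\mu}=\mathfrak t_0$, and use Proposition~\ref{proposition:Viformula} together with $V_0(2J_n)-V_1(2J_n)=1$ to get a nonzero $\overline d$. The only point where you stop short of the paper is the final lens-space step: the paper explicitly computes $d(S^3_{9/4}(U),\mathfrak t_0)=d(S^3_{9/4}(U),\mathfrak t_6)=0$, giving $\overline d(\Sigma,\mathfrak s_{3\mu})=-2$, whereas you defer this to ``evaluating the recursion''---you should carry this out, since without it the argument is not closed (and note that checking $\mathfrak s_{3\mu}$ alone suffices, so your parallel treatment of $\mathfrak s_{6\mu}=\mathfrak t_3$ is unnecessary). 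A terminological quibble: the surgery description of the branched double cover from a disk-band Seifert surface is the Akbulut--Kirby construction rather than the Montesinos trick per se.
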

\begin{proof}Throughout the proof, we continue to use the notations given in the statement of Theorem \ref{theorem:mainobstruction}. Let $Z_n=\Sigma_{K_{L_n}}$ be the double cover of $Y_{L_n}$ branched along $K_{L_n}$. Suppose that $L_n$ is concordant to a link $J$ with trivial Alexander polynomial.  By Theorem \ref{theorem:mainobstruction}, there is a metabolizer $M$ for the linking form $\lambda_{Z_n}$ of $Z_n$ such that $\overline{d}(Z_n,\mathfrak{s}_m)=0$ for all $m$ in $M$.

Figure \ref{figure:bandform} gives a disk-band form of a genus 1 Seifert surface of $K_{L_n}$. Using the Akbulut-Kirby method \cite{Akbulut-Kirby:1979-1}, we can draw a surgery diagram of $Z_n$ as given in Figure \ref{figure:cover}. Let $\mu$ be a meridian of the second figure of Figure \ref{figure:cover}. $H_1(Z_n)$ is generated by $\mu$ and $\lambda_{Z_n}(\mu,\mu)=-\frac{4}{9}\in \Q/\Z$. The linking form $\lambda_{Z_n}$ has unique metabolizer $M$ generated by $3\mu$. 

To obtain a contradiction, we show that $\overline{d}(Z_n,\mathfrak{s}_{3\mu})\neq 0$. Since $Z_n=S^3_{9/4}(J_n\#J_n^r)$, the Spin structure $\mathfrak{s}_0$ corresponds to the Spin$^c$ structure $\mathfrak{t}_6$ by Remark~\ref{remark:spinclabel}. We now observe that $\mathfrak{s}_{3\mu}=\mathfrak{t}_0$. By the definition of $\mathfrak{s}_{3\mu}$ and Remark \ref{remark:spinclabel},
\[\mathfrak{s}_{3\mu}=\mathfrak{s}_0+3\hat{\mu}= \mathfrak{t}_6+3\hat{\mu}=\mathfrak{t}_{6}+3\cdot 7\hat{\mu}=\mathfrak{t}_{0}.\]
Further, using the recursive formula from \cite[Section $4$]{Ozsvath-Szabo:2003-2}, it can be easily verified that $d(S^3_{9/4}(U),\mathfrak{t}_0)=d(S^3_{9/4}(U),\mathfrak{t}_6)=0$.

By Proposition \ref{proposition:Viformula} and Theorem \ref{theorem:C}, we have
\begin{align*}
\overline{d}(Z_n,\mathfrak{s}_{3\mu})&=d(Z_n,\mathfrak{s}_{3\mu})-d(Z_n,\mathfrak{s}_0)\\
&= d(S^3_{9/4}(J_n\# J_n^r),\mathfrak{t}_0)-d(S^3_{9/4}(J_n\# J_n^r),\mathfrak{t}_6)\\
&=d(S^3_{9/4}(U),\mathfrak{t}_0)-2V_0(J_n\#J_n^r)-d(S^3_{9/4}(U),\mathfrak{t}_6)+2V_1(J_n\#J_n^r)\\
&=2V_1(J_n\# J_n^r)-2V_0(J_n\#J_n^r)\\
&=2V_1(2J_n)-2V_0(2J_n)=4n-2-4n=-2,
\end{align*}
where in the last line we use the fact that $CFK^\infty(K^r)\cong CFK^\infty(K)$ \cite[Proposition 3.9]{Ozsvath-Szabo:2004-1}.
\end{proof}

\begin{theorem}If $n$ and $m$ are distinct positive integers, then the links $L_n$ and $L_m$ are not smoothly concordant.
\end{theorem}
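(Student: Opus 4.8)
The plan is to run the obstruction behind Theorem~\ref{theorem:mainobstruction}, but using the \emph{absolute} correction term of the (unique) spin structure of the relevant branched cover rather than only the relative invariant $\overline{d}$. This refinement is genuinely needed: a short computation from Proposition~\ref{proposition:Viformula} and Theorem~\ref{theorem:C} shows that the numbers $\overline{d}(\Sigma_{K_{L_n}},\mathfrak{s}_z)$ do not depend on $n$, so the Hedden--Livingston--Ruberman obstruction of Theorem~\ref{theorem:B} cannot by itself distinguish $L_n$ from $L_m$; what does distinguish them is the absolute value $d(\Sigma_{K_{L_n}},\mathfrak{t}_6)$, which records $V_1(2J_n)=2n-1$. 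Write $Z_k:=\Sigma_{K_{L_k}}$. As in the proof of Theorem~\ref{theorem:Arepeat}, $Z_k=S^3_{9/4}(J_k\#J_k^r)$, so $H_1(Z_k)\cong\Z/9$, and by Remark~\ref{remark:spinclabel} the spin structure of $Z_k$ is $\mathfrak{t}_6$.

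Suppose, for a contradiction, that $L_n$ and $L_m$ are smoothly concordant with $n\neq m$. First I would reproduce the construction in the proof of Theorem~\ref{theorem:mainobstruction}: blowing down the first component of the concordance gives a smooth concordance between $K_{L_n}$ and $K_{L_m}$ inside a smooth homology $S^3\times[0,1]$; taking the $2$-fold branched cover along this concordance gives a smooth $\Z_2$-homology cobordism from $Z_n$ to $Z_m$; and removing an arc joining its two boundary components yields a smooth $\Z_2$-homology $4$-ball $W$ with $\partial W=Z_n\#-Z_m$. (There is no Alexander-polynomial hypothesis to use here, so unlike in Theorem~\ref{theorem:mainobstruction} the manifold $Z_m$ need not be an integral homology sphere, and I will not invoke Theorem~\ref{theorem:Hedden-Livingston-Ruberman}.) Now $H_1(\partial W)\cong\Z/9\oplus\Z/9$ has odd order, so $\partial W$ admits a unique spin structure $\mathfrak{s}_0$, which under the connected-sum splitting is $\mathfrak{t}_6$ on each factor. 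Since $W$ is a $\Z_2$-homology $4$-ball we have $H^2(W;\Z_2)=0$, hence $w_2(W)=0$ and $W$ is spin, so its spin structure restricts to $\mathfrak{s}_0$; and since $W$ is also a rational homology $4$-ball (its integral homology in positive degrees has odd order), the correction-term inequality of \cite{Ozsvath-Szabo:2003-2}, applied to $W$ and to $-W$, forces $d(Z_n\#-Z_m,\mathfrak{s}_0)=0$.

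On the other hand, additivity of $d$ under connected sums, Proposition~\ref{proposition:Viformula} (applied with $p=9$ and $q=4$, so that $\lfloor 6/q\rfloor=\lfloor(p+q-1-6)/q\rfloor=1$), and Theorem~\ref{theorem:C} give
\[d(Z_n\#-Z_m,\mathfrak{s}_0)=d(Z_n,\mathfrak{t}_6)-d(Z_m,\mathfrak{t}_6)=-2V_1(2J_n)+2V_1(2J_m)=-2(2n-1)+2(2m-1)=4(m-n),\]
where I use $CFK^\infty(J_k\#J_k^r)\cong CFK^\infty(2J_k)$ as in the proof of Theorem~\ref{theorem:Arepeat}. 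For $n\neq m$ this is nonzero, contradicting the previous paragraph. The step that most deserves care is the verification that $W$ really is a $\Z_2$-homology $4$-ball with boundary $Z_n\#-Z_m$; but this is the same homological bookkeeping already carried out in Section~\ref{section:HLR}, and once it is in hand the remainder is a routine computation.
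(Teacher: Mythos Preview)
Your argument is correct and follows essentially the same route as the paper. The paper works directly with the $\Z_2$-homology cobordism $W$ from $Z_n$ to $Z_m$ and invokes the Spin$^c$ rational homology cobordism invariance of $d$ to conclude $d(Z_n,\mathfrak{s}_0)=d(Z_m,\mathfrak{s}_0)$, whereas you puncture $W$ to a $\Z_2$-homology ball bounding $Z_n\#-Z_m$ and deduce $d(Z_n\#-Z_m,\mathfrak{s}_0)=0$; these are equivalent formulations, and your computation $d(Z_n,\mathfrak{t}_6)-d(Z_m,\mathfrak{t}_6)=4(m-n)$ matches the paper's (the paper records the value as $d(Z_n,\mathfrak{s}_0)=4n$, but either way the invariant determines $n$).
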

\begin{proof}We continue to use the notations used in Theorem \ref{theorem:Arepeat}. Suppose that $L_n$ and $L_m$ are smoothly concordant. The proof of Theorem \ref{theorem:mainobstruction} shows that there is a $\Z_2$-homology cobordism $W$ between $Z_n$ and $Z_m$. It follows that $d(Z_n,\mathfrak{s}_0)=d(Z_m,\mathfrak{s}_0)$. By the proof of Theorem \ref{theorem:Arepeat}, we have that $d(Z_n,\mathfrak{s}_0)=4n$ and $d(Z_m,\mathfrak{s}_0)=4m$ since $n,m\geq 1$. It follows that $n=m$. This completes the proof.
\end{proof}

\bibliographystyle{amsalpha}
\renewcommand{\MR}[1]{}
\bibliography{research}

\def\cprime{$'$}
\providecommand{\bysame}{\leavevmode\hbox to3em{\hrulefill}\thinspace}
\providecommand{\MR}{\relax\ifhmode\unskip\space\fi MR }
\providecommand{\MRhref}[2]{%
  \href{http://www.ams.org/mathscinet-getitem?mr=#1}{#2}
}
\providecommand{\href}[2]{#2}
\begin{thebibliography}{CKRS12}

\bibitem[AK80]{Akbulut-Kirby:1979-1}
S.~Akbulut and R.~Kirby, \emph{Branched covers of surfaces in $4$-manifolds},
  Math. Ann. \textbf{252} (1979/80), no.~2, 111--131. \MR{82j:57001}

\bibitem[BCG17]{Bodnar-Celoria-Golla:2015-1}
J.~Bodn\'{a}r, D.~Celoria, and M.~Golla, \emph{A note on cobordisms of
  algebraic knots}, Algebr. Geom. Topol. \textbf{17} (2017), no.~4, 2543--2564.

\bibitem[BL14]{Borodzik-Livingston:2014-1}
M.~Borodzik and C.~Livingston, \emph{Heegaard {F}loer homology and rational
  cuspidal curves}, Forum Math. Sigma \textbf{2} (2014), e28. \MR{3347955}

\bibitem[CFP14]{Cha-Friedl-Powell:2014-1}
J.~C. Cha, S.~Friedl, and M.~Powell, \emph{Concordance of links with identical
  {A}lexander invariants}, Bull. Lond. Math. Soc. \textbf{46} (2014), no.~3,
  629--642. \MR{3210718}

\bibitem[CG88]{Cochran-Gompf:1988-1}
T.~D. Cochran and R.~E. Gompf, \emph{Applications of {D}onaldson's theorems to
  classical knot concordance, homology $3$-spheres and property ${P}$},
  Topology \textbf{27} (1988), no.~4, 495--512. \MR{90g:57020}

\bibitem[CH15]{Cochran-Horn:2015-1}
T.~D. Cochran and P.~D. Horn, \emph{Structure in the bipolar filtration of
  topologically slice knots}, Algebr. Geom. Topol. \textbf{15} (2015), no.~1,
  415--428. \MR{3325742}

\bibitem[CHH13]{Cochran-Harvey-Horn:2013-1}
T.~D. Cochran, S.~Harvey, and P.~D. Horn, \emph{Filtering smooth concordance
  classes of topologically slice knots}, Geom. Topol. \textbf{17} (2013),
  no.~4, 2103--2162.

\bibitem[CKRS12]{Cha-Kim-Ruberman-Strle:2010-1}
J.~C. Cha, T.~Kim, D.~Ruberman, and S.~Strle, \emph{Smooth concordance of links
  topologically concordant to the {H}opf link}, Bull. Lond. Math. Soc.
  \textbf{44} (2012), no.~3, 443--450.

\bibitem[Dav06]{Davis:2006-1}
J.~F. Davis, \emph{A two component link with {A}lexander polynomial one is
  concordant to the {H}opf link}, Math. Proc. Cambridge Philos. Soc.
  \textbf{140} (2006), no.~2, 265--268. \MR{2212279 (2006k:57010)}

\bibitem[DR17]{Davis-Ray:2017-1}
C.~W. Davis and A.~Ray, \emph{A new family of links topologically, but not
  smoothly, concordant to the {H}opf link}, J. Knot Theory Ramifications
  \textbf{26} (2017), no.~2, 1740002. \MR{3604484}

\bibitem[DV16]{Donald-Vafaee:2016-1}
A.~Donald and F.~Vafaee, \emph{A slicing obstruction from the {$\frac{10}{8}$}
  theorem}, Proc. Amer. Math. Soc. \textbf{144} (2016), no.~12, 5397--5405.
  \MR{3556281}

\bibitem[End95]{Endo:1995-1}
H~. Endo, \emph{Linear independence of topologically slice knots in the smooth
  cobordism group}, Topology Appl. \textbf{63} (1995), no.~3, 257--262.
  \MR{1334309 (96c:57010)}

\bibitem[FQ90]{Freedman-Quinn:1990-1}
M.~H. Freedman and F.~Quinn, \emph{Topology of $4$-manifolds}, Princeton
  Mathematical Series, vol.~39, Princeton University Press, Princeton, NJ,
  1990. \MR{MR1201584 (94b:57021)}

\bibitem[Fre82a]{Freedman:1982-2}
M.~H. Freedman, \emph{A surgery sequence in dimension four;\ the relations with
  knot concordance}, Invent. Math. \textbf{68} (1982), no.~2, 195--226.
  \MR{MR666159 (84e:57006)}

\bibitem[Fre82b]{Freedman:1982-1}
\bysame, \emph{The topology of four-dimensional manifolds}, J. Differential
  Geom. \textbf{17} (1982), no.~3, 357--453. \MR{MR679066 (84b:57006)}

\bibitem[GM16]{Golla-Marengon:2016-1}
M.~Golla and M.~Marengon, \emph{Correction terms and the non-orientable slice
  genus}, ar{X}iv:1607.08117, 2016.

\bibitem[Gom86]{Gompf:1986-1}
R.~E. Gompf, \emph{Smooth concordance of topologically slice knots}, Topology
  \textbf{25} (1986), no.~3, 353--373. \MR{842430 (87i:57004)}

\bibitem[GT04]{Garoufalidis-Teichner:2004-1}
S.~Garoufalidis and P.~Teichner, \emph{On knots with trivial {A}lexander
  polynomial}, J. Differential Geom. \textbf{67} (2004), no.~1, 167--193.
  \MR{2153483}

\bibitem[Hed09]{Hedden:2009-1}
M.~Hedden, \emph{On knot {F}loer homology and cabling. {II}}, Int. Math. Res.
  Not. IMRN (2009), no.~12, 2248--2274. \MR{2511910 (2011f:57015)}

\bibitem[Hil02]{Hillman:2002-1}
J.~Hillman, \emph{Algebraic invariants of links}, Series on Knots and
  Everything, vol.~32, World Scientific Publishing Co. Inc., River Edge, NJ,
  2002. \MR{2003k:57014}

\bibitem[HKL16]{Hedden-Kim-Livingston:2016-1}
M.~Hedden, S.~G. Kim, and C.~Livingston, \emph{Topologically slice knots of
  smooth concordance order two}, J. Differential Geom. \textbf{102} (2016),
  no.~3, 353--393. \MR{3466802}

\bibitem[HLR12]{Hedden-Livingston-Ruberman:2012-1}
M.~Hedden, C.~Livingston, and D.~Ruberman, \emph{Topologically slice knots with
  nontrivial {A}lexander polynomial}, Adv. Math. \textbf{231} (2012), no.~2,
  913--939. \MR{2955197}

\bibitem[HM17]{Hendricks-Manolescu:2017-1}
K.~Hendricks and C.~Manolescu, \emph{Involutive {H}eegaard {F}loer homology},
  Duke Math. J. \textbf{166} (2017), no.~7, 1211--1299. \MR{3649355}

\bibitem[Hom14]{Hom:2014-1}
J.~Hom, \emph{The knot {F}loer complex and the smooth concordance group},
  Comment. Math. Helv. \textbf{89} (2014), no.~3, 537--570.

\bibitem[Hom15]{Hom:2015-3}
\bysame, \emph{An infinite-rank summand of topologically slice knots}, Geom.
  Topol. \textbf{19} (2015), no.~2, 1063--1110. \MR{3336278}

\bibitem[Hom17]{Hom:2017-1}
\bysame, \emph{A survey on {H}eegaard {F}loer homology and concordance}, J.
  Knot Theory Ramifications \textbf{26} (2017), no.~2, 1740015. \MR{3604497}

\bibitem[HW14]{Hedden-Watson:2014-1}
M.~Hedden and L.~Watson, \emph{On the geography and botany of knot {F}loer
  homology}, ar{X}iv:1404.6913, 2014.

\bibitem[HW16]{Hom-Wu:2016-1}
J.~Hom and Z.~Wu, \emph{Four-ball genus bounds and a refinement of the
  {O}zv\'ath-{S}zab\'o tau invariant}, J. Symplectic Geom. \textbf{14} (2016),
  no.~1, 305--323. \MR{3523259}

\bibitem[Kaw96]{Kawauchi:1996-1}
A.~Kawauchi, \emph{A survey of knot theory}, Birkh\"auser Verlag, Basel, 1996.

\bibitem[KP16]{Kim-Park:2016-1}
M.~H. Kim and K.~B. Park, \emph{An infinite-rank summand of knots with trivial
  {A}lexander polynomial}, ar{X}iv:1604.04037, to appear in J. Symplectic
  Geom., 2016.

\bibitem[Krc15]{Krcatovich:2015-1}
D.~Krcatovich, \emph{The reduced knot {F}loer complex}, Topology Appl.
  \textbf{194} (2015), 171--201. \MR{3404612}

\bibitem[Liv08]{Livingston:2008-1}
C.~Livingston, \emph{Slice knots with distinct {O}zsv\'ath-{S}zab\'o and
  {R}asmussen invariants}, Proc. Amer. Math. Soc. \textbf{136} (2008), no.~1,
  347--349. \MR{2350422}

\bibitem[MO07]{Manolescu-Owens:2007-1}
C.~Manolescu and B.~Owens, \emph{A concordance invariant from the {F}loer
  homology of double branched covers}, Int. Math. Res. Not. IMRN (2007),
  no.~20, 21 pages.

\bibitem[NW15]{Ni-Wu:2015-1}
Y.~Ni and Z.~Wu, \emph{Cosmetic surgeries on knots in ${S}^3$}, J. Reine Angew.
  Math. \textbf{706} (2015), 1--17.

\bibitem[OS03]{Ozsvath-Szabo:2003-2}
P.~Ozsv{\'a}th and Z.~Szab{\'o}, \emph{Absolutely graded {F}loer homologies and
  intersection forms for four-manifolds with boundary}, Adv. Math. \textbf{173}
  (2003), no.~2, 179--261. \MR{1957829 (2003m:57066)}

\bibitem[OS04a]{Ozsvath-Szabo:2004-1}
\bysame, \emph{Holomorphic disks and knot invariants}, Adv. Math. \textbf{186}
  (2004), no.~1, 58--116. \MR{2065507}

\bibitem[OS04b]{Ozsvath-Szabo:2004-0}
\bysame, \emph{Holomorphic disks and topological invariants for closed
  three-manifolds}, Ann. of Math. (2) \textbf{159} (2004), no.~3, 1027--1158.
  \MR{2113019}

\bibitem[OS05]{Ozsvath-Szabo:2005-1}
\bysame, \emph{On knot {F}loer homology and lens space surgeries}, Topology
  \textbf{44} (2005), no.~6, 1281--1300. \MR{2168576}

\bibitem[OS08]{Ozsvath-Szabo:2008-1}
\bysame, \emph{Knot {F}loer homology and integer surgeries}, Algebr. Geom.
  Topol. \textbf{8} (2008), no.~1, 101--153. \MR{2377279}

\bibitem[OS11]{Ozsvath-Szabo:2011-1}
\bysame, \emph{Knot {F}loer homology and rational surgeries}, Algebr. Geom.
  Topol. \textbf{11} (2011), no.~1, 1--68. \MR{2764036}

\bibitem[OSS17]{Ozsvath-Stipsicz-Szabo:2014-1}
P.~Ozsv{\'a}th, A.~Stipsicz, and Z.~Szab{\'o}, \emph{Concordance homomorphisms
  from knot {F}loer homology}, Adv. Math. \textbf{315} (2017), 366--426.
  \MR{3667589}

\bibitem[Ras03]{Rasmussen:2003-1}
J.~A. Rasmussen, \emph{Floer homology and knot complements}, ProQuest LLC, Ann
  Arbor, MI, 2003, Thesis (Ph.D.)--Harvard University. \MR{2704683}

\bibitem[Tor53]{Torres:1953-1}
G.~Torres, \emph{On the {A}lexander polynomial}, Ann. of Math. (2) \textbf{57}
  (1953), 57--89. \MR{0052104}

\end{thebibliography}
\end{document}